\newenvironment{claim}[1]{\vspace{1mm}\textbf{Claim #1}:}{\vspace{1mm}}
\newtheorem{thm}{Theorem}[section]
\newtheorem{lem}[thm]{Lemma}
\newtheorem{cor}[thm]{Corollary}
\theoremstyle{definition}
\newtheorem{definition}[thm]{Definition}
\newtheorem{example}[thm]{Example}
\theoremstyle{remark}
\newtheorem{rem}[thm]{Remark}
\newcommand{\R}{\mathbb{R}}
\newcommand{\Z}{\mathbb{Z}}
\newcommand{\N}{\mathbb{N}}
\newcommand{\C}{\mathbb{C}}
\newcommand{\sigmap}{ \sigma_{\text{p}} }
\newcommand{\co}{ \textnormal{co-\!} }
\newcommand{\ess}{ \sigma_{\text{ess}} }
\newcommand{\ind}{\text{ind}}
\newcommand{\nul}{ \textnormal{N} }
\newcommand{\range}{ \textnormal{R} }
\newcommand{\domain}{ \textnormal{D} }
\newcommand{\divergence}{ \text{div} }
\begin{document}

\title{Properties of differential operators with vanishing coefficients}

\author{Daniel Jordon}
\email{djordon@umich.edu}
\address{Transportation Research Institute, University of Michigan, Ann Arbor, MI 48109}

\maketitle

\begin{abstract}
In this paper, we investigate the properties of linear operators defined on $L^p(\Omega)$ that are the composition of differential operators with functions that vanish on the boundary $\partial \Omega$. We focus on bounded domains $\Omega \subset \R^d$ with Lipshitz continuous boundary. In this setting we are able to characterize the spectral and Fredholm properties of a large class of such operators. This includes operators of the form $Lu = \divergence( \Phi \nabla u)$ where $\Phi$ is a matrix valued function that vanishes on the boundary, as well as operators of the form $Lu = D^{\alpha} (\varphi u)$ or $L = \varphi D^{\alpha} u$ for some function $\varphi \in \mathscr{C}^1(\bar{\Omega})$ that vanishes on $\partial \Omega$.
\end{abstract}

\section{Introduction}



In this article we study the properties of linear operators when we allow the leading coefficient functions to vanish on the boundary of the domain. For example, the differential equation: 
\begin{align}
\label{some elliptic operator}
L u = -\divergence( \Phi \nabla u ) = f,
\end{align}
where $f \in L^p(\Omega)$, and $\Phi(x) \in \C^{d\times d}$ has been extensively studied when $\Phi$ is uniformly positive definite on $\bar{\Omega}$. The operator $L$ is called uniformly elliptic. For more on such operators, see \cites{demengel2012functional, gilbarg1977elliptic, han2011elliptic, krylov1996lectures, maz2011sobolev} and the references therein.
%
Less is known when the uniform positivity assumption on $\Phi$ is relaxed. In \cite[\S 6.6]{gilbarg1977elliptic}, Trudinger and Gilberg partially relax the condition. In particular, they assume $\Phi \in \mathscr{C}^{0,\gamma}(\bar{\Omega})$ for some $\gamma \in (0,1)$, and if $x_0 \in \partial \Omega$ then there exists a suitably chosen $y \in \R^d$ such that $\Phi(x_0) \cdot (x_0-y) \neq 0$. With this restriction, they establish existence and uniqueness of solutions to \eqref{some elliptic operator}. 
In \cite{murthy1968boundary}, Murthy and Stampacchia studied the properties of weak solutions to \eqref{some elliptic operator} in the case where there exists a positive function $m$ with $m^{-1} \in L^p(\Omega)$ such that
\begin{align} \label{generic restriction}
{v} \cdot \Phi(x) {v} \geq m(x) |{v}|^2,
\qquad \text{for a.e. } x\in \Omega, \text{ and all } {v} \in \R^d.
\end{align}
Studies on the properties of solutions to $Lu = f$, when $L$ is non-uniformly elliptic, can be found in \cites{trudinger1971non-uniformly, trudinger1981harnack, trudinger1977maximum}, as well as \cite{coffman1976bvp}, and \cite{franchi1998irregular}, where the authors assume restrictions on $\Phi$ that are similar to \eqref{generic restriction}. The results presented here address the Fredholm properties of $L$ in the case when $\Phi = 0$ on $\partial \Omega$, and/or when $v \cdot \Phi(x) v \geq m(x) |v|$ for some positive function $m \in \mathscr{C}^{1}(\bar{\Omega})$ with $m^{-1} \not\in L^p(\Omega)$.

Other examples of a differential equation with vanishing coefficients arise when studying linear stability of solutions to non-linear PDE. The operator
\begin{align}\label{linearized compacton equation}
Lu = (\varphi u)_{xxx} + (\varphi u)_{x} + b u_x,
\end{align}
defined on $L^p(-1, 1)$ where $\varphi(x) = a \cos^2(\tfrac{\pi}{2} x)$ and $a,b \in \R$, arose when studying compactly supported solutions to
\begin{align*}
u_t = (u^2)_{xxx} + (u^2)_x.
\end{align*}

Our results can be used to accomplish two goals. The first is assessing the solvability of the boundary value problem $Lu = f$ where $u = g$ on the boundary. To that end, we analyze the Fredholm properties of $L$. Our second goal is establishing well-posedness (or ill-posedness) of the Cauchy problem $u_t = Lu$, where $u = g$ when $t = 0$. For this goal, we present results on the spectrum of $L$.

The operators studied here are linear differential operators on $L^p(\Omega)$, elliptic or otherwise, that have coefficient functions on the leading order derivative term that vanish on $\partial \Omega$. For the matrix valued function $\Phi : \bar{\Omega} \to \C^{d\times d}$, we only require that at least one eigenvalue vanishes on $\partial \Omega$. The operators shown in \eqref{some elliptic operator} and \eqref{linearized compacton equation} are examples of operators that can be analyzed using the results presented here.

\section{The main results} \label{section:main results}

\subsection{Preliminaries}

Throughout this article we make the following assumptions:
\begin{itemize}
\item The domain $\Omega \subset \R^d$ is open and bounded and $\partial \Omega$ is Lipshitz continuous.
\item The function $\varphi : \bar{\Omega} \to \R$ is such that $\varphi \in \mathscr{C}^k(\bar{\Omega})$ for some positive integer $k$, $\varphi > 0$ on $\Omega \subset \R^d$, and $\ker \varphi = \partial \Omega$.
\item The ambient function space for the differential operator $L$ is $L^p(\Omega)$ where $1 < p < \infty$.
\end{itemize}
We say the scalar valued function $\varphi$ is \emph{simply vanishing on} $\partial \Omega$ if for each $y \in \partial \Omega$ there exists an $a \neq 0$ such that
\begin{align}\label{vanishing_definition intro}
	\lim_{x \to y} \frac{\varphi(x)}{\text{dist}(x,\partial \Omega)} = a,
\end{align}
where the limit is taken in $\Omega$. For the matrix valued function $\Phi : \bar{\Omega} \to \C^{d\times d}$ we make restrictions on its eigenfunctions, defined as the functions $\varphi_i$ such that $\Phi(x) v = \varphi_i(x)v$ for some $v \in \C^d$. We say the matrix valued function $\Phi : \bar{\Omega} \to \C^{d\times d}$ is \emph{simply vanishing on} $\partial \Omega$ if $\Phi(x)$ is positive semi-definite in $\bar{\Omega}$ and for each fixed $i$, the eigenfunction $\varphi_i$ is either strictly positive on $\bar{\Omega}$ or simply vanishing on $\partial \Omega$, with at least one $i$ such that $\varphi_i$ is simply vanishing on $\partial \Omega$.




\subsection{Results}

In this section, we summarize the results proved in this article. In the following theorem, $\lfloor a \rfloor$ denotes the integer part of $a$.

\begin{thm}
\label{thm:kinda hardy intro}
Let $\Omega \subset \R^d$ be a bounded open set and let $\varphi \in \mathscr{C}^1(\bar{\Omega})$ be simply vanishing on $\partial \Omega$. Fix $m \in \N$ and $1 < p < \infty$. Assume $k \in \N$ is such that 
\begin{align*}
k > \tfrac{d}{p} + (m-1)\big\lfloor \tfrac{d}{p} \big\rfloor,
\end{align*}
and that the boundary $\partial \Omega$ is $\mathscr{C}^k$. If $u \in L^{p}(\Omega)$ and $\varphi^m u \in W^{k+m,p}(\Omega)$ then
\begin{align*}
u \in W^{\kappa,p}(\Omega), \quad \text{where } \kappa \coloneqq k- m\big\lfloor \tfrac{d}{p} \big\rfloor,
\end{align*}
and there exists a $c > 0$, independent of $u$, such that
\begin{align*}
\|u\|_{W^{\kappa, p}(\Omega)} \leq c \|\varphi^{m} u\|_{W^{k+m,p}(\Omega)}.
\end{align*}
\end{thm}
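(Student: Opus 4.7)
My plan is induction on $m$. For the inductive step, suppose the theorem holds for $m-1$ in place of $m$, and consider $v = \varphi^m u \in W^{k+m,p}$ with $u \in L^p$. Setting $w = \varphi^{m-1} u$, we have $w \in L^p$ (since $\varphi$ is bounded) and $\varphi w = v \in W^{(k+m-1)+1, p}$. The base case, applied with the parameter $k+m-1 > d/p$, yields $w \in W^{k+m-1-\lfloor d/p\rfloor, p}$. Writing $k' = k - \lfloor d/p \rfloor$, one checks $k' > d/p + (m-2)\lfloor d/p \rfloor$ and $w = \varphi^{m-1} u \in W^{k' + (m-1), p}$, so the inductive hypothesis at level $m-1$ yields $u \in W^{k' - (m-1)\lfloor d/p \rfloor, p} = W^{\kappa, p}$, with the norm estimate composing through.

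For the base case $m=1$ the argument is local. A partition of unity splits $u$ into an interior piece, where $\varphi$ is bounded below and $u = v/\varphi$ inherits the full Sobolev regularity of $v$, and finitely many pieces supported near boundary points. Near $y \in \partial \Omega$ I would use the assumed $\mathscr{C}^k$ regularity of $\partial \Omega$ to choose a $\mathscr{C}^k$ chart flattening the boundary to $\{x_d = 0\}$. Since $k+1 > d/p$, Sobolev embedding places $v \in W^{k+1,p}$ into $\mathscr{C}^0(\bar \Omega)$, so $v$ has well-defined boundary values; combined with $v/\varphi = u \in L^p$ and the simple vanishing of $\varphi$, this forces $v \equiv 0$ on $\partial \Omega$ (otherwise $|v/\varphi|$ would blow up like $1/\text{dist}(\cdot, \partial \Omega)$ and escape $L^p$). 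The vanishing of $v$ at $\{x_d = 0\}$ then yields the Hardy-average representation
\[
\frac{v(x', x_d)}{x_d} \;=\; \int_0^1 \partial_d v(x', \sigma x_d)\, d\sigma
\]
in the flattened model; tangential derivatives pass under the integral and are controlled by $\partial_d v \in W^{k,p}$, giving tangential regularity essentially for free.

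The main obstacle is controlling the normal direction. Differentiating the Hardy average in $x_d$ produces the weighted operator $g \mapsto x_d^{-2}\int_0^{x_d} s\, g(s)\, ds$, which is $L^p$-bounded by a change-of-variables computation, but iterated differentiation generates singular combinations $\partial_d^i v / x_d^{j+1-i}$ whose cancellation at the boundary is visible only through a pointwise Taylor expansion of $v$. To justify that expansion I would invoke Sobolev embedding of $v$ into $\mathscr{C}^{k - \lfloor d/p \rfloor}$, and this embedding is the source of the $\lfloor d/p \rfloor$ derivative loss appearing in the conclusion. Extra care is needed at the borderline where $d/p$ is an integer, and in transferring the estimate through the chart, where the ratio $\varphi/\text{dist}(\cdot, \partial \Omega)$ is only continuous (because $\varphi \in \mathscr{C}^1$) and so must be absorbed into the integral representation rather than divided out. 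Composing the constants through the inductive step is the remaining bookkeeping.
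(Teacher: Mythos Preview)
Your inductive reduction from $m$ to $m-1$ matches the paper's argument exactly: peel off one factor of $\varphi$, apply the $m=1$ case, then invoke the inductive hypothesis with the shifted index $k' = k - \lfloor d/p \rfloor$.

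The base case, however, takes a genuinely different route. You localize, flatten the boundary with a $\mathscr{C}^k$ chart, and use the explicit Hardy average $v/x_d = \int_0^1 \partial_d v(\cdot, \sigma x_d)\, d\sigma$ together with weighted Hardy operators for the normal direction. The paper never flattens the boundary. Instead it iterates a single lemma built on the Leibniz identity $D^\alpha(\varphi u) = u\, D^\alpha \varphi + \varphi\, D^\alpha u$ for $|\alpha| = 1$: Sobolev embedding puts $\varphi u \in \mathscr{C}^1(\bar\Omega)$, so $u$ extends continuously to $\bar\Omega$ and $D^\alpha(\varphi u) = u\, D^\alpha\varphi$ on $\partial\Omega$; the trace isomorphism $\hat T: W^{k,p}/(W^{1,p}_0 \cap W^{k,p}) \to W^{k-1/p,p}(\partial\Omega)$ then forces $\varphi\, D^\alpha u \in W^{1,p}_0 \cap W^{k,p}$, and Hardy's inequality gives $D^\alpha u \in L^p$. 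Iterating $\kappa$ times yields $u \in W^{\kappa,p}$; the embedding hypothesis $kp > d$ must survive each iteration, and that is exactly where the $\lfloor d/p \rfloor$ loss appears. The paper's approach sidesteps the chart-regularity issue (your chart is only $\mathscr{C}^k$ but $v \in W^{k+1,p}$) and the $\varphi/\mathrm{dist}$ ratio problem you flag, at the price of invoking the trace map as a black box; your integral representation is more concrete but, as you note, demands more bookkeeping.

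One point worth stealing: the paper does not track constants through the induction. Once the set-theoretic inclusion $\domain(\varphi^m) \subset W^{\kappa,p}$ is shown, closedness of $\varphi^m : L^p \to W^{k+m,p}$ plus the Closed Graph Theorem give $\|u\|_{W^{\kappa,p}} \le c(\|u\|_{L^p} + \|\varphi^m u\|_{W^{k+m,p}})$ for free, and Hardy's inequality absorbs the $\|u\|_{L^p}$ term.
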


The above result is proven in section~\ref{subsection:domain_vanishing} as Theorem~\ref{thm:kinda hardy}, with the estimate proven in Remark~\ref{rem:implicit bound}. 
%
%
%
The following theorem is proven in section~\ref{subsection:compactness} as Theorem~\ref{thm:not closed but closable}.

\begin{thm}\label{thm:not closed but closable intro}
Let $\Omega \subset \R^d$ be an open and bounded set with $\mathscr{C}^{0,1}$ boundary. Let $\varphi \in \mathscr{C}^{1}(\bar{\Omega})$ be simply vanishing on $\partial \Omega$. If $A$ is Fredholm on $L^p(\Omega)$ with domain $W^{k,p}(\Omega)$ for some $k > 0$, then $\varphi^m A$, $m \geq 1$, is not closed on its natural domain,
\begin{align*}
\domain(\varphi^m A) = \{ u \in L^p : u \in \domain(A), \; \varphi^m Au \in L^{p}(\Omega) \} \equiv \domain(A),
\end{align*}
but $\varphi^m A$ is closable.
\end{thm}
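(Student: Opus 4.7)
The plan is to verify the three assertions of the theorem in turn.

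First, the identification $\domain(\varphi^m A)=\domain(A)$ is immediate: since $\varphi\in\mathscr{C}^1(\bar{\Omega})$ is continuous on the compact set $\bar{\Omega}$, multiplication by $\varphi^m$ is bounded on $L^p(\Omega)$, so $\varphi^m Au\in L^p(\Omega)$ for every $u\in W^{k,p}(\Omega)=\domain(A)$, while the reverse inclusion is built into the definition of the natural domain.

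For closability I plan to use a duality argument. Let $\{u_n\}\subset W^{k,p}(\Omega)$ satisfy $u_n\to 0$ in $L^p$ and $\varphi^m Au_n\to v$ in $L^p$. For any $\psi\in C_c^\infty(\Omega)$ the product $\varphi^m\psi$ again lies in $C_c^\infty(\Omega)$, since $\varphi$ is smooth and strictly positive on the compact set $\text{supp}\,\psi\subset\Omega$. Because $A$ is Fredholm (hence closed and densely defined on $L^p$) and, for the differential operators to which this theorem is applied, integration by parts gives $C_c^\infty(\Omega)\subset\domain(A^*)$, I may pair in $L^p$--$L^{p'}$ duality to obtain
\[
\langle v,\psi\rangle=\lim_n\langle\varphi^m Au_n,\psi\rangle=\lim_n\langle Au_n,\varphi^m\psi\rangle=\lim_n\langle u_n,A^*(\varphi^m\psi)\rangle=0,
\]
since $u_n\to 0$ in $L^p$ while $A^*(\varphi^m\psi)\in L^{p'}(\Omega)$ is fixed; because $C_c^\infty(\Omega)$ is dense in $L^{p'}(\Omega)$, this forces $v=0$.

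For non-closedness I plan to argue by contradiction via the open mapping theorem. Introduce the graph norm $\|u\|_{gr}:=\|u\|_{L^p}+\|\varphi^m Au\|_{L^p}$. If $\varphi^m A$ were closed, $(W^{k,p}(\Omega),\|\cdot\|_{gr})$ would be a Banach space; combined with $\|\cdot\|_{gr}\leq C\|\cdot\|_{W^{k,p}}$ (from boundedness of $A$ and of $\varphi^m$), the open mapping theorem would force the two norms to be equivalent. To contradict this I build $\{v_n\}\subset W^{k,p}(\Omega)$ with $\|v_n\|_{W^{k,p}}$ bounded away from zero but $\|v_n\|_{gr}\to 0$. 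Writing $\rho(x):=\dist(x,\partial\Omega)$, pick $f_n\in L^p(\Omega)$ with $\text{supp}\,f_n\subset\{\rho<1/n\}$ and $\|f_n\|_{L^p}=1$; simple vanishing of $\varphi$ yields $\varphi\leq C\rho$ near $\partial\Omega$, so $\|\varphi^m f_n\|_{L^p}\leq Cn^{-m}\to 0$. Since $\range(A)$ has finite codimension, let $P$ denote the projection onto $\range(A)$ along a finite-dimensional complement $V$; because $f_n\rightharpoonup 0$ weakly in $L^p$ and $V$ is finite-dimensional, the component $(I-P)f_n$ converges strongly to $0$, and hence $f_n':=Pf_n\in\range(A)$ satisfies $\|f_n'\|_{L^p}\to 1$ together with $\|\varphi^m f_n'\|_{L^p}\to 0$. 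Now solve $Av_n=f_n'$ with $v_n$ in a fixed topological complement of $\ker A$; the standard Fredholm estimate $\|u\|_{W^{k,p}}\leq C'(\|u\|_{L^p}+\|Au\|_{L^p})$ combined with boundedness of $A$ gives $\|v_n\|_{W^{k,p}}$ bounded both above and below (the lower bound via $\|f_n'\|_{L^p}\leq\|A\|_{op}\|v_n\|_{W^{k,p}}$). Since $f_n'\rightharpoonup 0$, it follows that $v_n\rightharpoonup 0$ weakly in $W^{k,p}(\Omega)$, and the compact Rellich embedding $W^{k,p}(\Omega)\hookrightarrow L^p(\Omega)$ (valid for $k\geq 1$ on a bounded Lipschitz domain) gives $v_n\to 0$ strongly in $L^p$. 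Therefore $\|v_n\|_{gr}=\|v_n\|_{L^p}+\|\varphi^m f_n'\|_{L^p}\to 0$ while $\|v_n\|_{W^{k,p}}$ stays bounded below, contradicting the would-be norm equivalence.

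The main obstacle I anticipate is the closability step in the purely abstract Fredholm setting: one genuinely needs $A^*$ to act on $C_c^\infty(\Omega)$ (or some analogous localization property of $A$), which is automatic for the differential operators in the paper's applications but is not a priori implied by Fredholmness alone. A secondary technical point is reconciling the projection $f_n\mapsto f_n'$ with the fact that $f_n$ is supported near $\partial\Omega$; this is handled by the weak-to-strong trick on the finite-dimensional complement $(I-P)f_n$ as outlined above.
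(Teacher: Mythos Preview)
Your non-closedness argument is correct and runs parallel to the paper's, though packaged differently. The paper abstracts the situation: it first records that $\varphi^m:L^p\to L^p$ is not semi-Fredholm (its range is not closed when the target is $L^p$, by Theorem~\ref{thm:range closed vanishing}), so there is a bounded sequence $\{x_n\}$ with no convergent subsequence yet $\{\varphi^m x_n\}$ convergent; pulling this back through the compact pseudo-inverse $A_0$ (compactness coming from $\domain(A)\subset\subset L^p$ via Rellich--Kondrachov and Theorem~\ref{thm:fredholm_on_compactly_embedded_domains}) produces the contradiction with the would-be norm equivalence. Your explicit construction with $f_n$ supported in $\{\rho<1/n\}$ is exactly a hands-on instance of this abstract sequence, and the projection trick for landing in $\range(A)$ is fine. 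One cosmetic point: $\varphi$ is only $\mathscr{C}^1$, so $\varphi^m\psi$ is in $\mathscr{C}^1_c(\Omega)$ rather than $C_c^\infty(\Omega)$; this does not affect your non-closedness argument.

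The closability step, however, has the gap you yourself flag, and the paper closes it cleanly without any assumption like $C_c^\infty(\Omega)\subset\domain(A^*)$. The point is that Fredholmness of $A$ alone forces the adjoint of the product to be densely defined: since $A$ is closed with $\range(A)$ closed of finite codimension, a general result of Schechter (recorded here as Theorem~\ref{thm:densely_defined_adjoint}) gives that $(\varphi^m A)^*=A^*(\varphi^m)^*$ exists and $\domain((\varphi^m A)^*)$ is dense in $(L^p)^*$. One then tests the putative limit $v$ against an arbitrary $w^*\in\domain((\varphi^m A)^*)$ rather than against $C_c^\infty$, obtaining $w^*(v)=\lim (\varphi^m A)^*w^*(u_n)=0$; density of such $w^*$ together with boundedness of $\{\varphi^m Au_n\}$ forces $v=0$ weakly, hence strongly. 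This is precisely Theorem~\ref{thm:closable_product} in the paper, and it is what lets the statement stand for an arbitrary Fredholm $A$ with domain $W^{k,p}(\Omega)$, not just differential operators.
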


The above theorem tells us that even simple operators do not have the desirable property of being closed on their `natural domain'. For example, the operator $L u = \sin(\pi x) u_{xx}$ is not closed on $W^{2,p}(0,1)$ for any $p \in (1,\infty)$ by Theorem~\ref{thm:not closed but closable intro}. We can use Theorem~\ref{thm:kinda hardy intro} to get an estimate on the properties of the domain, as demonstrated in the following example.

\begin{example}\label{exa:example_for_domain}
Let $\Omega = (0,1)$ and consider the operator $L$ acting on $L^p(\Omega)$ defined by $Lu = \varphi u_{xxx}$ where $\varphi$ is simply vanishing. The operator $L$ is of the form $\varphi A$ where $A$ is Fredholm. By Theorem~\ref{thm:not closed but closable intro}, $L$ is not closed on its natural domain, $W^{3,p}(\Omega)$, but is closable. Let $\bar{L}$ denote the closure of $L$, and let $\varphi^{(k)} : L^p(\Omega) \to W^{3-k,p}(\Omega)$ denote the multiplication operator $u \mapsto \varphi^{(k)} u$ where the function $\varphi^{(k)}$ denotes the $k$-th derivative of the function $\varphi$, and as an abuse of notation we set $\varphi = \varphi^{(0)}$. After rewriting $L$ as
\begin{align*}
L u = (\varphi u)_{xxx} - 3 (\varphi^{(1)} u )_{xx} + 3 (\varphi ^{(2)} u)_x - \varphi^{(3)} u, 
\end{align*}
we see that 
\begin{align*}
\domain(\bar{L}) = \domain( A^3 \varphi ) \cap \domain( A^2 \varphi^{(1)} ) \cap \domain( A \varphi^{(2)} ) \cap \domain( \varphi^{(3)} ),
\end{align*}
where $A$ is the derivative operator on $L^p(\Omega)$. Specifically, if $\varphi \in \mathscr{C}^3(\bar{\Omega})$ is simply vanishing, then the fact that $\domain(A^3) = W^{3,p}(\Omega)$ implies $\domain(A^3\varphi) \subset W^{2,p}(\Omega)$ by Theorem~\ref{thm:kinda hardy intro}. By the same theorem, we have $W^{2,p}(\Omega) \subset \domain(A^{k} \varphi^{(3-k)})$ for $k = 0,1,2$ so the best we can do is $\domain(\bar{L}) \subset W^{2,p}(\Omega)$.
More concretely, if we set $\varphi(x) = \sin(\pi x)$ and fix $p=2$, then one can construct functions $u \in \domain(\bar{L})$ such that $u \not\in W^{3,2}(\Omega)$ but $u \in W^{2,2}(\Omega)$. See \cite[\S 3.1]{jordon2013properties}.
\end{example}

The following theorem speaks about the range of the multiplication operator. It 
is proved in section~\ref{subsec:range vanishing} as Theorem~\ref{thm:range closed vanishing}. 

\begin{thm}
\label{thm:range closed vanishing intro}
Let $\Omega \subset \R^d$ be open and bounded with $\mathscr{C}^{0,1}$ boundary and assume the function $\varphi \in \mathscr{C}^k(\bar{\Omega})$ is simply vanishing on $\partial \Omega$. Then the range of the operator $u \mapsto \varphi^m u$ is closed in $W^{k,p}(\Omega)$ whenever $k \geq m$ and is not closed when $k < m$.
\end{thm}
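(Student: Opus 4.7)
I interpret the statement as concerning the multiplication operator $M_{\varphi^m}\colon L^p(\Omega) \to L^p(\Omega)$, $u \mapsto \varphi^m u$, whose range intersected with $W^{k,p}(\Omega)$ is
\begin{equation*}
R \;=\; \bigl\{\, v \in W^{k,p}(\Omega) : v/\varphi^m \in L^p(\Omega) \,\bigr\},
\end{equation*}
the $L^p$ preimage of a candidate $v$ being $u = v/\varphi^m$. I use throughout that $\varphi$ simply vanishing and $\mathscr{C}^1(\bar{\Omega})$ forces $\varphi \asymp d$ near $\partial \Omega$ (where $d(x) = \dist(x, \partial \Omega)$), with $\varphi$ bounded below by a positive constant on every compact subset of $\Omega$.

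For the closed case $k \geq m$, my plan is to identify $R$ with the kernel of the continuous trace map $T = (\gamma_0, \ldots, \gamma_{m-1})\colon W^{k,p}(\Omega) \to \prod_{j=0}^{m-1} L^p(\partial \Omega)$, $\gamma_j v := \partial_\nu^j v|_{\partial \Omega}$; the hypothesis $k \geq m$ (with $p>1$) is exactly what makes all $m$ traces bounded. For the inclusion $R \subseteq \ker T$ I flatten $\partial \Omega$ locally via a Lipschitz chart and Taylor-expand
\begin{equation*}
v(y',y_d) \;=\; \sum_{j=0}^{m-1} \tfrac{1}{j!}\gamma_j v(y')\, y_d^j + y_d^m h(y',y_d).
\end{equation*}
Since $\varphi \asymp y_d$ locally, the condition $v/\varphi^m \in L^p$ forces the map $y_d \mapsto \sum_{j=0}^{m-1} \gamma_j v(y')\, y_d^{j-m}$ to belong to $L^p$ for a.e.\ $y'$; the terms $y_d^{j-m}$ for $j = 0, \ldots, m-1$ have pairwise distinct, strictly non-$p$-integrable blow-up rates at $y_d = 0$, so no cancellation is possible and each coefficient $\gamma_j v(y')$ must vanish, handled inductively starting from the most singular $j=0$. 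For the reverse inclusion $\ker T \subseteq R$, I invoke the classical higher-order Hardy inequality on Lipschitz domains: any $v \in W^{m,p}(\Omega)$ with $\gamma_0 v = \cdots = \gamma_{m-1} v = 0$ satisfies $v/d^m \in L^p(\Omega)$ with a continuous estimate, and thus $v/\varphi^m \in L^p$ because $\varphi \asymp d$ near $\partial\Omega$ and $\varphi$ is bounded below on compacta. Continuity of $T$ then gives $R = \ker T$ closed.

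For the non-closed case $k < m$, I will exhibit a single $v \in W^{k,p}(\Omega) \setminus R$ lying in the $W^{k,p}$-closure of $R$. Fix a boundary point $x_0$, flatten locally, and set $v(y',y_d) = y_d^k \rho(y_d) \chi(y')$ extended by zero, with smooth compactly supported cutoffs $\rho, \chi$ satisfying $\rho(0) = \chi(0) = 1$. Locally $v/\varphi^m \asymp y_d^{k-m}$ and $(k-m)p \leq -p < -1$ gives a non-$p$-integrable singularity at $y_d = 0$, so $v \notin R$. Take cutoffs $\eta_n \in \mathscr{C}^\infty_c(\Omega)$ with $\eta_n \equiv 1$ on $\{d \geq 2/n\}$ and $|D^\alpha \eta_n| \lesssim n^{|\alpha|}$, and set $v_n := \eta_n v$. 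Each $v_n$ has support compactly contained in $\Omega$, so $v_n/\varphi^m \in L^\infty$ is compactly supported and $v_n \in R$. Convergence $v_n \to v$ in $W^{k,p}$ follows from a direct Leibniz computation: a typical mixed term $D^\beta \eta_n \cdot D^{\alpha-\beta} v$ lives on the strip $\{1/n \leq d \leq 2/n\}$ with $|D^\beta \eta_n| \lesssim n^{|\beta|}$, and for the $j \leq |\alpha-\beta|$ normal derivatives actually reducing the vanishing order of $y_d^k$ one has $|D^{\alpha-\beta} v| \lesssim y_d^{k-j} \lesssim n^{-(k-j)}$ on that strip, which together with the strip's $n^{-1}$ volume contribution yields an $L^p$-norm of order $n^{(|\beta|+j-k) - 1/p}$; since $|\beta|+j \leq |\alpha| \leq k$ this is $O(n^{-1/p}) \to 0$.

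The main obstacle I expect is the forward trace-vanishing step for $k \geq m$: making the Taylor-expansion argument rigorous under only $W^{k,p}$ regularity of $v$ and gluing local boundary charts into a global statement under the $\mathscr{C}^{0,1}$ hypothesis on $\partial\Omega$. The generalized Hardy inequality driving the reverse inclusion is classical on Lipschitz domains but should be explicitly cited. The $k < m$ case is largely a direct computation once the model $v$ is fixed.
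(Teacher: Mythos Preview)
Your proposal is correct in outline but differs from the paper on both halves of the theorem.

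For $k \geq m$: the paper first establishes (its Lemma~\ref{lem:statement of range}) that $\range(\varphi^m) = W^{k,p}(\Omega) \cap W^{m,p}_0(\Omega)$, using Hardy's inequality for one inclusion and an explicit cutoff/mollifier approximation $\phi_n \to \varphi^m$ in $W^{m,p}$ for the other. For $k=m$ closedness is then immediate; for $k>m$ the paper does not reuse this characterization but instead shows $\domain(\varphi^m) \subset\subset L^p(\Omega)$ via Rellich--Kondrachov plus Hardy, and deduces closed range from the semi-Fredholm property. Your trace-kernel approach reaches the same identification (both routes amount to $R = W^{k,p} \cap W^{m,p}_0$) and gives closedness directly, so yours is the more economical argument for this statement alone, while the paper's detour buys it the compactness and semi-Fredholm facts it reuses later. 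The obstacle you anticipate is real: bi-Lipschitz flattening does not preserve $W^{m,p}$ for $m \geq 2$, and iterated normal derivatives $\partial_\nu^j$ are awkward when $\nu$ is only $L^\infty$; the paper's mollification argument for $R \subseteq W^{m,p}_0$ avoids this entirely. Your version is repairable by working with vertical (graph-chart) derivatives rather than normal ones, or by citing the Lipschitz-domain equivalence between $W^{m,p}_0$ and $\{v \in W^{m,p} : v/d^m \in L^p\}$ as a black box.

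For $k < m$: the paper's argument is completely different and abstract. It argues by contradiction: if $\varphi^m$ were semi-Fredholm into $W^{k,p}$, then factoring $\varphi^m = \varphi^{m-k}\varphi^k$ with $\varphi^k : L^p \to W^{k,p}_0$ Fredholm would force $\varphi^{m-k} : W^{k,p}_0 \to W^{k,p}$ to be semi-Fredholm, hence to have closed range containing $\mathscr{C}^\infty_0(\Omega)$ and therefore all of $W^{k,p}_0$; but then $\varphi^k \in W^{k,p}_0$ would admit a preimage $\varphi^{2k-m} \in W^{k,p}_0$, which via Hardy would make $\varphi^{k-m} \in L^p$, a contradiction. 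Your explicit construction $v \sim y_d^k$ with cutoff approximants $v_n = \eta_n v$ is more elementary and more informative, and the Leibniz/strip estimate you sketch is correct. The paper's route stays within its operator-theoretic framework and needs no local boundary construction.
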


If we know the range of the multiplication operator $u \mapsto \varphi^m u$ is closed in $W^{k,p}(\Omega)$ then we necessarily have
\begin{align*}
\|u\|_{L^p(\Omega)} \leq c \|\varphi^m u\|_{W^{k,p}(\Omega)}
\end{align*}
for some constant $c > 0$. 

The matrix valued analogs of Theorems~\ref{thm:kinda hardy intro} and \ref{thm:range closed vanishing intro} are proved in section~\ref{subsection:matrix_functions}. One implication is illustrated in the following example.

\begin{example}
Let $\Omega\subset \R^d$ be open and bounded with $\mathscr{C}^{0,1}$ boundary. Let $\Phi \in \mathscr{C}^1(\bar{\Omega}; \R^{d\times d})$ be simply vanishing on $\partial \Omega$. Then by the matrix analog of Theorem~\ref{thm:range closed vanishing intro} (which is Theorem~\ref{thm:range closed vanishing_matrices}) we know that the range of $\Phi^m$, $m \in \N$, is closed in 
\begin{align*}
W^{1,2}(\Omega^d) \coloneqq \underbrace{W^{1,2}(\Omega) \times  \cdots \times W^{1,2}(\Omega),}_{d \text{ copies}}
\end{align*}
    if and only if $m = 1$. This implies $\Phi^m : L^2(\Omega^d) \to W^{1,2}(\Omega^d)$ is semi-Fredholm if and only if $m=1$. Now, it is well known that the weak gradient $\nabla : W^{1,2}(\Omega) \subset L^2(\Omega) \to L^2(\Omega^d)$ and its adjoint, $\divergence(\cdot) : L^2(\Omega^d) \to L^2(\Omega)$, are semi-Fredholm. Thus, the non-uniformly elliptic operator,
\begin{align}
Lu = \divergence( \Phi^m \nabla u),
\end{align}
is semi-Fredholm on $L^2(\Omega)$ if and only if $m=1$.
\end{example}


\begin{thm}
\label{thm:roots equal spectrum intro}
Let $\Omega \subset \R^d$ be open and bounded with $\mathscr{C}^{0,1}$ boundary, $m,k \in \N$ with $0 < m < k$, and let $A$ be densely defined on $L^p(\Omega)$. Assume the following:
\begin{itemize}
\item The operator $A$ is closed on $L^p(\Omega)$ and $\domain(A) \subset W^{k,p}(\Omega)$.
\item There exists a $u \in \domain(A)$ such that $u \notin W^{m,p}_0(\Omega) \cap W^{k,p}(\Omega)$. 
\item The resolvent set $\rho(A)$ is non-empty.
\end{itemize}
If $\varphi \in \mathscr{C}^k(\bar{\Omega})$ is simply vanishing then
\begin{align*}
\ess(A \varphi^m ) = \ess(\overline{\varphi^m A^*} ) =\C. 
\end{align*}
Moreover, if either $A\varphi^m$ or $\overline{\varphi^m A^*}$ is Fredholm then
\begin{align*}
\sigmap(\overline{\varphi^m A^*} ) = \C.
\end{align*}
\end{thm}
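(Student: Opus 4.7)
The plan is to reduce the essential spectrum question to a range/cokernel analysis for $A\varphi^m$ and then transfer to $\overline{\varphi^m A^*}$ via duality. Since $\varphi^m$ is a bounded (real) multiplication operator on $L^p(\Omega)$ and $A\varphi^m$ is densely defined (its natural domain contains $\mathscr{C}^\infty_c(\Omega)$), we have $(A\varphi^m)^* = \varphi^m A^*$ on $\domain(A^*)$, which agrees with $\overline{\varphi^m A^*}$. Consequently Fredholm properties of the two operators correspond under $\lambda \leftrightarrow \bar\lambda$, and since $\C$ is invariant under conjugation, it suffices to prove $\ess(A\varphi^m) = \C$ and to locate $\sigmap(\overline{\varphi^m A^*})$ from the $A\varphi^m$ side via the closed range theorem.

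Next I would identify the image of $u \mapsto \varphi^m u$ inside $\domain(A)$. Since $\varphi$ is simply vanishing, $\varphi \sim \dist(\cdot, \partial \Omega)$ near $\partial \Omega$, and the Hardy inequality together with its converse yield
\[
\varphi^m L^p(\Omega) \cap W^{k,p}(\Omega) = W^{m,p}_0(\Omega) \cap W^{k,p}(\Omega),
\]
so that $u \in \domain(A\varphi^m)$ exactly when $\varphi^m u \in W^{m,p}_0 \cap \domain(A)$. The hypothesis that some $u_0 \in \domain(A)$ fails to lie in $W^{m,p}_0 \cap W^{k,p}$ then makes this a proper closed subspace of $\domain(A)$ whose complement is infinite-dimensional (via the independent trace conditions of orders $0, 1, \dots, m-1$, which are nontrivial on $\domain(A) \subset W^{k,p}$ because $k > m$).

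After shifting so that $0 \in \rho(A)$ (valid since $\rho(A) \ne \emptyset$ and the shift translates $\ess$ uniformly), $A \colon (\domain(A), \|\cdot\|_{\mathrm{gr}}) \to L^p(\Omega)$ is a bounded isomorphism. Hence
\[
\range(A\varphi^m) = A\bigl(W^{m,p}_0(\Omega) \cap \domain(A)\bigr)
\]
is closed in $L^p$ with infinite-dimensional cokernel, while $\ker(A\varphi^m) = 0$. Thus $0 \in \ess(A\varphi^m)$, and the closed range theorem yields $\ker((A\varphi^m)^*) = \ker(\varphi^m A^*)$ infinite-dimensional, placing $0 \in \sigmap(\overline{\varphi^m A^*})$. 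For $\lambda \ne 0$, I would recast $(A\varphi^m - \lambda) u = f$ as $(A - \lambda \varphi^{-m}) v = f$ on $v = \varphi^m u \in W^{m,p}_0 \cap \domain(A)$ and run the analogous argument, invoking Theorems~\ref{thm:kinda hardy intro} and~\ref{thm:range closed vanishing intro} to track the singular multiplier $\varphi^{-m}$ and the resulting range behaviour. In every case where the range stays closed with infinite codimension, duality furnishes $\bar\lambda \in \sigmap(\overline{\varphi^m A^*})$, which by conjugation symmetry yields the \emph{moreover} conclusion.

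\textbf{The main obstacle} is the extension from $\lambda = 0$ to general $\lambda$: the singular perturbation $\lambda \varphi^{-m}$ destroys the clean isomorphism structure that made the $\lambda = 0$ case direct. The argument must show that for each $\lambda$ either the range of $A\varphi^m - \lambda$ remains closed with infinite-dimensional cokernel (forcing $\bar\lambda \in \sigmap(\overline{\varphi^m A^*})$) or fails to be closed; both outcomes put $\lambda$ in $\ess(A\varphi^m)$, and only the former is needed to fill $\sigmap(\overline{\varphi^m A^*})$ under a Fredholm hypothesis. The technical heart lies in handling $\varphi^{-m}$ as an unbounded multiplier on $W^{m,p}_0 \cap \domain(A)$, where Theorem~\ref{thm:kinda hardy intro} supplies the quantitative regularity control needed to make the cokernel dimension computation robust in $\lambda$.
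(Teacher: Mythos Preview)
Your approach has two concrete gaps. First, the assertion that the cokernel of $A\varphi^m$ (at your shifted $\lambda=0$) is \emph{infinite}-dimensional is not justified by the hypotheses: you only know that some single $u_0\in\domain(A)$ lies outside $W^{m,p}_0\cap W^{k,p}$, so the codimension of $W^{m,p}_0(\Omega)\cap\domain(A)$ in $\domain(A)$ is at least $1$, but nothing rules out it being finite. Your trace-counting heuristic implicitly assumes $\domain(A)$ contains functions realizing all boundary data up to order $m-1$, which the hypotheses do not guarantee. Indeed, the ``moreover'' clause of the theorem explicitly contemplates the case in which $A\varphi^m$ \emph{is} Fredholm, i.e.\ has finite-dimensional cokernel; your argument as written is inconsistent with that scenario.

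Second, and more seriously, the extension to $\lambda\neq 0$ via the singular substitution $(A-\lambda\varphi^{-m})v=f$ is not a workable route: $\varphi^{-m}$ is an unbounded operator on $W^{m,p}_0\cap\domain(A)$, and neither Theorem~\ref{thm:kinda hardy intro} nor Theorem~\ref{thm:range closed vanishing intro} gives you control over such a perturbation of $A$. The paper sidesteps this entirely. The key observation you are missing is that $\domain(A\varphi^m)\subset\subset L^p(\Omega)$ (via Lemma~\ref{lem:compactness implies compactness almost} and Rellich--Kondrachov), so \emph{every} bounded operator on $L^p$---in particular both $\lambda I$ and $\eta\varphi^m$ for any $\eta\in\C$---is compact relative to $A\varphi^m$. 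Hence the Fredholm class and index of $A\varphi^m$, $A\varphi^m-\lambda$, and $(A-\eta)\varphi^m$ all coincide. Choosing $\eta\in\rho(A)$ reduces the entire spectrum to a single index computation: $(A-\eta)\varphi^m$ has trivial kernel and nontrivial cokernel (since $\range(\varphi^m)\subsetneq\domain(A)$), so the index is negative, which simultaneously forces $\ess(A\varphi^m)=\C$ and, in the Fredholm case, $\ind(\overline{\varphi^m A^*}-\lambda)>0$ for every $\lambda$ by duality. This relatively-compact-perturbation trick is the missing idea; with it, no $\lambda$-by-$\lambda$ analysis is needed at all.
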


In the above theorem, $\sigmap(L)$ and $\ess(L)$ denote the point spectrum and essential spectrum of $L$ respectively. The definition of the essential spectrum is given in section~\ref{section:spectrum} and the result is proven as Theorem~\ref{thm:roots equal spectrum}. Its import is demonstrated in the following example.

\begin{example}
This example continues from Example~\ref{exa:example_for_domain}, where $\Omega = (0,1)$, and $L = \varphi u_{xxx}$. For any $u \in W^{3,p}(\Omega)$, there exists $a,b \in \R$ such that $u + ax + b \in W^{1,p}_0(\Omega)$, which implies 
\begin{align*}
W^{3,p}(\Omega) = W^{3,p}(\Omega) \cap W^{1,p}_0(\Omega) \oplus \text{span}\{1,x\}.
\end{align*}
Now consider the multiplication operator $\varphi : L^p(\Omega) \to W^{3,p}(\Omega)$ given by $u \mapsto \varphi u$ where $\varphi$ is simply vanishing on $\partial \Omega$. Then we know that the range of $\varphi$ is $W^{3,p}(\Omega) \cap W^{1,p}_0(\Omega)$, which has co-dimension 2 in $W^{3,p}(\Omega)$. Thus, if we let $A$ denote three applications of the weak derivative operator on $L^p(\Omega)$, then we know that $\rho(A)$ is nonempty, and $\domain(A) = W^{3,p}(\Omega) \subset\subset L^p(\Omega)$ by the Rellich-Kondrachov Theorem. Then we see that $A \varphi$ has finite dimensional nullspace and the range has finite co-dimension. This shows $A \varphi$ is Fredholm. Applying Theorem~\ref{thm:roots equal spectrum intro} yields $\sigmap(\bar{L}) = \sigmap(\overline{\varphi A}) = \C$.

More concretely, we have $\sigmap (\overline{\sin(\pi x) u_{xxx}} ) = \C$. The same holds if we set $\varphi(x) = \sin^2(\pi x)$, but not necessarily when we set $\varphi(x) = \sin^m(\pi x)$ where $m \in \N$ and $m \geq 3$.
\end{example}

\subsection{Outline of the article}
The article is structured as follows:
\begin{itemize}
\item Section~\ref{section:preliminaries} introduces the notation and basic definitions that are used in this article.
\item Section~\ref{section:basics} goes over some basic properties of closed and Fredholm operators.
\item Section~\ref{section:properties_vanishing} covers the properties of the operators $u \mapsto \varphi u$ and $\mathbf{u} \mapsto \Phi \mathbf{u}$. The domain and range of the operator $u \mapsto \varphi u$ is covered in sections~\ref{subsection:domain_vanishing} and \ref{subsec:range vanishing} respectively. Matrix valued operators are handled in section~\ref{subsection:matrix_functions}
\item We study various properties of differential operators composed with vanishing operators in section~\ref{section:differential_with}. In particular, we focus on the Fredholm properties and spectra of the operators $A\varphi$ and $\varphi A$ where $A$ is a Fredholm differential operator.
\end{itemize}

\section{Notation and definitions}\label{section:preliminaries}

We will review some of the basic definitions and introduce the notation used in this article. We will use capital letters, such as $W$, $X$, $Y$, or $Z$, to denote a Banach space. We use $\mathcal{B}(X, Y)$ to denote the set of bounded linear operators from $X$ to $Y$, and $\mathcal{C}(X,Y)$ to denote the set of closed and densely defined linear operators from $X$ to $Y$. The sets $\mathcal{B}(X)$ and $\mathcal{C}(X)$ denote the sets $\mathcal{B}(X,X)$ and $\mathcal{C}(X,X)$ respectively.

The domain and range of a linear operator $A$ will be denoted by $\domain(A)$ and $\range(A)$ respectively, and we use $\nul(A)$ to denote the nullspace of $A$. If $A \in \mathcal{C}(X,Y)$, then $\domain(A)$ equipped with the graph norm,
\begin{align*}
\|x\|_{\domain(A)} \coloneqq \|x\|_{X} + \|Ax\|_{Y}, \qquad x \in \domain(A),
\end{align*}
is a Banach space, and we call $\|\cdot\|_{\domain(A)}$ the $A$\emph{-norm}. When referring to the composition of two linear operators $A$ and $B$, the subspace
\begin{align*}
\domain(AB) = \{ \, x \in \domain(B) : Bx \in \domain(A) \, \},
\end{align*}
is called the \emph{natural domain} of $AB$. 

If $A$ is a linear operator from $X$ to $Y$, any closed operator ${A}_1$ where $\domain(A) \subset \domain(A_1)$ and $A = {A}_1$ on $\domain(A)$ is called a {\it closed extension} of $A$. We call $A$ {\it closable} if there exists a closed extension of $A$. We denote $\bar{A}$ as the closure of $A$, and it is the `smallest' closed extension, in the sense that $\domain(\bar{A}) \subset \domain(A_1)$ for any operator $A_1$ that is a closed extension of $A$. An operator is closable if every sequence $\{x_n\} \subset \domain(A)$ where $x_n \to 0$ in $X$ and $Ax_n \to y$ in $Y$ implies $y = 0$.

A Banach space $Y$ is said to be \emph{continuously embedded} in another Banach space $X$ if there exists an operator $P \in \mathcal{B}(Y,X)$ that is one-to-one. The space $Y$ is said to be \emph{compactly embedded} in $X$ if $P$ is also compact and we write $Y \subset\subset X$ whenever $Y$ is compactly embedded in $X$. For Sobolev spaces, we take $P$ to be the inclusion operator, which we denote as $\iota$. 

Most of the analysis takes place in $L^p(\Omega)$ and the Sobolev spaces $W^{k,p}(\Omega)$, where $k \in \N$, $\Omega \subset \R^d$ is an open and bounded set, and, unless stated otherwise, $1 < p < \infty$. The closure of a set $\Omega \subset \R^d$ will be denoted by $\bar{\Omega}$ and the boundary of $\Omega$ will be denoted by $\partial \Omega$. The space $\mathscr{C}^k(\Omega)$ denotes the space of all functions from $\Omega$ to $\R$ that are $k$-times continuous differentiable everywhere in $\Omega$, and $\mathscr{C}^{k}_{0}(\Omega) \subset \mathscr{C}^k(\Omega)$ denotes the subspace of those functions with compact support in $\Omega$. The space $W^{k,p}_0(\Omega)$ denotes the closure of $\mathscr{C}^{\infty}_{0}(\Omega)$ in $W^{k,p}(\Omega)$. If $u$ is weakly differentiable, we let $D^{\alpha}u$ denote the $\alpha$-th weak derivative of $u$, where $\alpha = (\alpha_1,\ldots,\alpha_d) \in \Z^d_+$ is a multi-index, and we let $|\alpha| = \alpha_1 + \cdots +\alpha_d$ denote the order of $\alpha$. We use $\nabla^{(k)} u$ to denote the vector of all weak derivatives of $u$ with order $k$, and set $\nabla u = \nabla^{(1)} u$ to be the gradient of $u$. 

We say $\partial \Omega$ is $\mathscr{C}^{k,\gamma}$ if for each point $y \in \partial \Omega$, there exists an $r > 0$ and a $\mathscr{C}^{k,\gamma}$ function $\gamma : \R^{d-1} \to \R$ such that
\begin{align*}
\Omega \cap B(y,r) = \{ x \in B(y,r) : x_d > \gamma(x_1,\ldots, x_{d-1})\},
\end{align*}
where $B(x,r) \coloneqq \{ y \in \R^d : | x - y| < r\}$, and $\mathscr{C}^{k,\gamma}$ is a H\"{o}lder space.

\begin{definition}
Let $\Omega \subset \R^d$ be a bounded and open set. For any $\varphi \in \mathscr{C}^1(\bar{\Omega})$, we will call the multiplication operator $u \mapsto \varphi u$ \emph{vanishing} if $\ker \varphi = \partial \Omega$. As an abuse of notation, we will use $\varphi$ to refer to the multiplication operator $u \mapsto \varphi u$. 
\end{definition}

\begin{definition}
Let $\Omega \subset \R^d$ be a bounded and open set. Take $\text{dist}(x,\partial \Omega) \coloneqq \inf_{y \in \partial \Omega} |x - y|$ to be the distance from $x$ to the boundary of $\Omega$. Let $\varphi \in \mathscr{C}^{1}(\bar{\Omega})$. We say $\varphi$ is \emph{simply vanishing on} $\partial \Omega$ if $\ker \varphi = \partial \Omega$, and for each $y \in \partial \Omega$ there exists an $a \neq 0$ such that
\begin{align}\label{vanishing_definition}
	\lim_{x \to y} \frac{\varphi(x)}{\text{dist}(x,\partial \Omega)} = a,
\end{align}
where the limit is taken within $\Omega$. The multiplication operator $u \mapsto \varphi u$ is called simply vanishing on $\partial \Omega$ if the function $\varphi$ is simply vanishing on $\partial \Omega$.
\label{def:simply_vanishing}
\end{definition}

\begin{definition}
Let $\Omega \subset \R^d$ be a bounded and open set. We say the function $\varphi \in \mathscr{C}^{m}(\bar{\Omega})$ is \emph{vanishing of order} $m$ on $\partial \Omega$ if $\ker \varphi = \partial \Omega$, $D^{\alpha} \varphi = 0$ on $\partial \Omega$ when $|\alpha| < m$, and $\nabla^{(m)} \varphi \neq 0$ on $\partial \Omega$. The multiplication operator $u \mapsto \varphi u$ is called vanishing of order $m$ on $\partial \Omega$ if the function $\varphi$ is vanishing of order $m$ on $\partial \Omega$.
\label{def:simply_vanishing_equivalent}
\end{definition}

Functions that are vanishing of order 1 are simply vanishing functions. To see why, take $\Omega\subset\R^d$ with $\mathscr{C}^1$ boundary and assume $\varphi \in \mathscr{C}^k(\bar{\Omega})$ is simply vanishing. Fix any $y \in \partial \Omega$ and let $\Omega_n = B(y,n^{-1}) \cap \Omega$. Since $\partial \Omega$ is $\mathscr{C}^1$, there exists a point $x_n \in \Omega_n$ such that $|x_n - y| = \text{dist}(x_n,\partial \Omega)$. Given $\varphi$ is simply vanishing, there exists an $a \neq 0$ such that
\begin{align*}
a = \lim_{n \to\infty} \frac{\varphi(x_n)}{\text{dist}(x_n,\partial \Omega)} 
=\lim_{n \to\infty} \frac{|\varphi(x_n) - \varphi(y)|}{|x_n - y|} = |\nabla \varphi(y)|,
\end{align*}
which shows $\nabla \varphi(y) \neq 0$. Since $y \in \partial \Omega$ was arbitrary, we see that $\nabla \varphi \neq 0$ on $\partial \Omega$ whenever $\partial \Omega$ is $\mathscr{C}^1$.

We have a similar definition for matrix-valued functions. Let $\Phi : \bar{\Omega} \to \C^{d \times d}$ be Hermitian for each $x \in \bar{\Omega}$. Then there exists a unitary matrix $\mathbf{U}(x)$ and a real diagonal matrix $\mathbf{D}(x)$ such that 
\begin{align}\label{eq:schur's decomposition}
\Phi = \mathbf{U} \mathbf{D} \mathbf{U}^*,
\end{align}
by Schur's decomposition theorem. If $\Phi \in \mathscr{C}^1(\bar{\Omega}; \C^{d\times d})$, we can choose $\mathbf{U}$ and $\mathbf{D}$ in $\mathscr{C}^{1}(\bar{\Omega}; \C^{d\times d})$ so that \eqref{eq:schur's decomposition} holds. Thus, we lose no generality by assuming the operator $\mathbf{D}$ has the form $\mathbf{D} = \text{diag}(\varphi_1, \ldots, \varphi_d)$ for some functions $\varphi_i \in \mathscr{C}^1(\bar{\Omega})$ where $i = 1,\ldots, d$.

\begin{definition}
Let $\Omega \subset \R^d$ be open and bounded, and let $\Phi \in \mathscr{C}^{m}(\bar{\Omega}; \C^{d \times d})$. We say the function $\Phi$ is \emph{vanishing of order} $m$ if $\Phi$ is positive semi-definite on $\bar{\Omega}$, and the matrix $\mathbf{D} = \text{diag}(\varphi_1,\ldots,\varphi_d)$ in its Schur decomposition has the following property: for each $i = 1,\ldots,d$, either $\varphi_i > 0$ on $\bar{\Omega}$ or $\varphi_i$ is vanishing of order $m_i$ on $\partial \Omega$ with $m_i \leq m$, with at least one function $\varphi_i$ that is vanishing of order $m$ on $\partial \Omega$. The multiplication operator $\mathbf{u} \mapsto \Phi \mathbf{u}$ is called vanishing of order $m$ if the function $\Phi$ is vanishing of order $m$. 
\label{def:simply vanishing matrices2}
\end{definition}

\subsection{Fredholm and semi-Fredholm operators}\label{subsection:fredholm}

We will utilize Fredholm operator theory when describing the properties of the multiplication operators $u \mapsto \varphi u$ and $\mathbf{u} \mapsto \Phi \mathbf{u}$. In this section, we briefly review the theory of Fredholm operators. 

Let $X$ and $Y$ be Banach spaces. An operator $A : X \to Y$ is called \emph{Fredholm} if
\begin{enumerate}[label=(\alph*)]
\item The domain of $A$ is dense in $X$,
\item The operator $A$ is closed on its domain,
\item The nullspace of $A$ is finite dimensional, \label{nullspace_finite_dimensional}
\item The range of $A$ is closed in $Y$,
\item The co-dimension of the range of $A$ is finite dimensional, \label{codimension_of_the_range}
\end{enumerate}
where the co-dimension of a closed subspace $M \subset Y$, denoted $\co\dim M$, is the dimension of the quotient space $Y /M$. We use $\mathcal{F}(X,Y)$ to denote the set of Fredholm operators from $X$ to $Y$ and write $\mathcal{F}(X)$ in place of $\mathcal{F}(X,X)$. Note that property \ref{codimension_of_the_range} is equivalent to requiring that the nullspace of the adjoint operator $A^*$ be finite dimensional. The \emph{index} of a Fredholm operator $A$ is defined as
\begin{align*}
\ind(A) \coloneqq \dim \nul(A) - \co\dim \range(A).
\end{align*}

The set of \emph{semi-Fredholm} operators from $X$ to $Y$, denoted $\mathcal{F}_+(X,Y)$, is the set of operators that satisfy all the properties of Fredholm opertors except possibly property \ref{codimension_of_the_range}. The set of semi-Fredholm operators from $X$ to $X$ will be denoted by $\mathcal{F}_+(X)$. We note that our definition for $\mathcal{F}_+(X,Y)$ is sometimes referred to as the set of \emph{upper semi-Fredholm} operators. 

The following characterization of Fredholm operators is useful. 

\begin{thm}[\cite{schechter2002principles}, Theorem 7.1]
\label{thm:equivalence_of_fredholm}
Let $X$ and $Y$ be Banach spaces. Then $A \in \mathcal{F}(X,Y)$ if and only if there exists closed subspaces $X_0 \subset X$ and $Y_0 \subset Y$ where $Y_0$ is finite dimensional and $X_0$ has finite co-dimension such that
\begin{align*}
X = X_0 \oplus \nul(A), \qquad Y = \range(A) \oplus Y_0.
\end{align*}
Moreover, there exists operators $A_0 \in \mathcal{B}(Y,X)$, $K_1 \in \mathcal{B}(X)$, and $K_2 \in \mathcal{B}(Y)$ where
\begin{itemize}
\item The $\nul(A_0) = Y_0$,
\item The $\range(A_0) = X_0 \cap \domain(A)$,
\item $A_0 A = I - K_1$ on $\domain(A)$,
\item $A A_0 = I - K_2$ on $Y$,
\item The $\nul(K_1) = X_0$, while $K_1 = I$ on $\nul(A)$,
\item The $\nul(K_2) = \range(A)$, while $K_2 = I$ on $Y_0$.
\end{itemize}
\end{thm}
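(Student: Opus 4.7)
The plan is to establish both directions of the equivalence along with the parametrix construction in three steps, relying on two standard ingredients: the finite-dimensional Hahn--Banach complementation lemma and the open mapping theorem. For the forward direction I would first build $X_0$ by using that $\nul(A)$ is finite dimensional: fix a basis $e_1,\ldots,e_n$ of $\nul(A)$, use Hahn--Banach to pick biorthogonal functionals $f_i \in X^*$ with $f_i(e_j) = \delta_{ij}$, and set $X_0 = \bigcap_{i=1}^n \ker f_i$, so that $X = X_0 \oplus \nul(A)$ is a topological direct sum with $\co\dim X_0 = n$. For $Y_0$, I would use that $\range(A)$ is closed of finite codimension to lift any basis of the finite-dimensional quotient $Y/\range(A)$ to a finite dimensional (hence closed) subspace $Y_0$ complementary to $\range(A)$, which again gives a topological direct sum and a bounded projection $\pi : Y \to \range(A)$ along $Y_0$.

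Next I would construct the parametrix. Let $\tilde A$ denote the restriction of $A$ to $X_0 \cap \domain(A)$. Because $X_0 \cap \nul(A) = \{0\}$ and because any $x \in \domain(A)$ decomposes as $x = x_0 + \xi$ with $x_0 \in X_0 \cap \domain(A)$ and $\xi \in \nul(A)$, the map $\tilde A$ is a bijection onto $\range(A)$. Equipping $X_0 \cap \domain(A)$ with the graph norm $\|\cdot\|_{\domain(A)}$ produces a Banach space (using that both $A$ and $X_0$ are closed), and $\tilde A$ is then a bounded bijection of Banach spaces. The open mapping theorem yields a bounded inverse, giving in particular $\|x\|_X \leq c \|Ax\|_Y$ for $x \in X_0 \cap \domain(A)$. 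I would then define
\begin{equation*}
A_0 y \coloneqq \tilde A^{-1} \pi y, \qquad y \in Y,
\end{equation*}
which produces an element of $\mathcal{B}(Y,X)$ with $\nul(A_0) = Y_0$ and $\range(A_0) = X_0 \cap \domain(A)$. Setting $K_1 \coloneqq I - A_0 A$ on $\domain(A)$ and $K_2 \coloneqq I - A A_0$ on $Y$, a direct calculation on the two direct-sum decompositions shows that $K_1$ agrees with the projection of $X$ onto $\nul(A)$ along $X_0$, hence extends uniquely to a bounded operator on $X$ satisfying $\nul(K_1) = X_0$ and $K_1 = I$ on $\nul(A)$; similarly $K_2$ is the projection of $Y$ onto $Y_0$ along $\range(A)$, establishing the remaining bulleted identities.

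For the reverse implication, if such $X_0$ and $Y_0$ exist then $\dim \nul(A) = \co\dim X_0 < \infty$ and $\co\dim \range(A) = \dim Y_0 < \infty$, and $\range(A)$ is closed as the kernel of the bounded projection $I - \pi$; combined with the standing hypothesis that $A$ is densely defined and closed, this places $A$ in $\mathcal{F}(X,Y)$. The main obstacle I anticipate is the bounded-inverse step: one must confirm that $(X_0 \cap \domain(A), \|\cdot\|_{\domain(A)})$ is genuinely a Banach space before invoking the open mapping theorem, and then translate the bound $\|x\|_X \leq c\|Ax\|_Y$ into boundedness of $A_0$ as a map $Y \to X$ rather than merely $Y \to \domain(A)$ in the graph norm. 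Once that is nailed down, the remaining assertions reduce to bookkeeping on the two decompositions.
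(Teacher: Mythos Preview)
The paper does not give its own proof of this theorem; it is quoted verbatim from Schechter's \emph{Principles of Functional Analysis} (Theorem 7.1) and used as a black box throughout. Your argument is the standard textbook construction and is correct; it is in fact essentially the proof one finds in Schechter.

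Two small remarks. First, the ``obstacle'' you flag at the end is not one: boundedness of $\tilde A^{-1}$ from $\range(A)$ into $(X_0\cap\domain(A),\|\cdot\|_{\domain(A)})$ is \emph{stronger} than boundedness into $X$, since $\|x\|_X \le \|x\|_{\domain(A)}$ trivially, so $A_0 = \tilde A^{-1}\pi \in \mathcal B(Y,X)$ follows immediately. Second, for the reverse implication you correctly observe that one must assume $A$ is already closed and densely defined; the theorem as stated in the paper suppresses this, but the equivalence is really only asserting that the decomposition conditions are equivalent to properties (c)--(e) in the paper's definition of $\mathcal F(X,Y)$, with (a)--(b) taken as standing hypotheses.
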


Note that $K_1$ and $K_2$ are projection operators and their ranges are finite dimensional. The operator $A_0$ from Theorem~\ref{thm:equivalence_of_fredholm} will be referred to as the \emph{pseudo-inverse} of $A$, since $A A_0 A = A$ and $A_0 A A_0 = A_0$.

An equivalent characterization of semi-Fredholm operators is as follows: if $X$ and $Y$ are Banach spaces and $A \in \mathcal{C}(X,Y)$, then $A$ is not semi-Fredholm if and only if there exists a bounded sequence $\{x_k\} \subset \domain(A)$ having no convergent subsequence such that $\{Ax_k\}$ converges. A proof of this equivalence can be found in \cite{schechter1976remarks} or \cite[p. 177]{schechter2002principles}.



\begin{rem}
Suppose $\Omega\subset \R^d$ is open and bounded, $\varphi \in \mathscr{C}^{m}(\bar{\Omega})$ is simply vanishing on $\partial \Omega$ and $\zeta \in \mathscr{C}^{m}(\bar{\Omega})$ is vanishing of order $m$ on $\partial \Omega$. Then the multiplication operator $u \mapsto \varphi^m u$ is semi-Fredholm from $L^p(\Omega)$ to $W^{k,p}(\Omega)$ if and only if the mapping $u \mapsto \zeta u$ is semi-Fredholm. Moreover, $\domain(\varphi^m) = \domain(\zeta)$. To see why, use the fact that the multiplication operators $u \mapsto \zeta \varphi^{-m} u$ and $u \mapsto \varphi^{m} \zeta^{-1} u$ are one-to-one and onto $L^p(\Omega)$ and that the composition of a semi-Fredholm operator with a Fredholm operator is semi-Fredholm. 
\end{rem}

\section{Basic properties of closed operators}\label{section:basics}

Fredholm operators are closed under composition. That is, if $X,Y$, and $Z$ are Banach spaces, then $A \in \mathcal{F}(X,Y)$ and $B \in \mathcal{F}(Y,Z)$ implies $BA \in \mathcal{F}(X,Z)$ with $\ind(BA) = \ind(B) + \ind(A)$. Moreover, if $B \in \mathcal{C}(Y,Z)$ and $BA \in \mathcal{F}(X,Z)$ then we necessarily have $B \in \mathcal{F}(Y,Z)$. These claims are proved, respectively, in \cite[p. 157]{schechter2002principles} as Theorem 7.3, and \cite[p. 162]{schechter2002principles} as Theorem 7.12. As for semi-Fredholm operators, we have the following.

\begin{lem}[\cite{schechter1976remarks}, Lemma 4]\label{lem:reverse_products_and_semifredholm}
Let $X,Y$, and $Z$ be Banach spaces and let $A \in \mathcal{F}(X,Y)$, and $B \in \mathcal{C}(Y,Z)$. If $BA \in \mathcal{F}_+(X,Z)$, then $B \in \mathcal{F}_+(Y,Z)$.
\end{lem}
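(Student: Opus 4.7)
The plan is to verify the two nontrivial defining properties of a semi-Fredholm operator for $B$, namely that $\nul(B)$ is finite-dimensional and $\range(B)$ is closed; density of $\domain(B)$ in $Y$ and closedness of $B$ are already built into the assumption $B \in \mathcal{C}(Y,Z)$. The main ingredients I will use are the decomposition $Y = \range(A) \oplus Y_0$ with $Y_0$ finite-dimensional, together with a pseudo-inverse $A_0 \in \mathcal{B}(Y,X)$ supplied by Theorem~\ref{thm:equivalence_of_fredholm}, satisfying $A A_0 = I - K_2$ with $K_2$ the projection onto $Y_0$ along $\range(A)$.

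For the nullspace, I first verify the set-theoretic identity $A(\nul(BA)) = \nul(B) \cap \range(A)$: the inclusion ``$\subset$'' follows immediately from the definition of the natural domain of $BA$, and for ``$\supset$'' any $y = Ax$ in $\nul(B) \cap \range(A)$ automatically satisfies $Ax \in \domain(B)$ with $B(Ax) = 0$, so $x \in \nul(BA)$. Since $\nul(BA)$ is finite-dimensional by $BA \in \mathcal{F}_+(X,Z)$, its image $\nul(B) \cap \range(A)$ is finite-dimensional, and the quotient $\nul(B)/(\nul(B) \cap \range(A))$ embeds into $Y/\range(A)$, which is finite-dimensional by $A \in \mathcal{F}(X,Y)$. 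Adding these two dimensions shows $\nul(B)$ is finite-dimensional.

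For the range, the key observation is that every element of $\range(B)$ already lies in the subspace $\range(BA) + B(Y_0)$, which is closed in $Z$ because $\range(BA)$ is closed and $B(Y_0)$ is finite-dimensional. To make sense of $B(Y_0)$ I need $Y_0 \subset \domain(B)$, and this can be arranged without loss of generality by the density of $\domain(B)$ in $Y$: a basis of $Y/\range(A)$ can be represented by elements of $\domain(B)$ after small perturbations, since linear independence modulo $\range(A)$ is an open condition in the finite-dimensional quotient. With $Y_0 \subset \domain(B)$ in hand, for any $y \in \domain(B)$ the decomposition $y = A(A_0 y) + K_2 y$ gives $A A_0 y = y - K_2 y \in \domain(B)$, so $A_0 y$ lies in the natural domain of $BA$ and $By = BA(A_0 y) + B(K_2 y) \in \range(BA) + B(Y_0)$. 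Hence $\overline{\range(B)} \subset \range(BA) + B(Y_0) \subset \range(B)$, which forces $\range(B)$ to be closed.

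The only delicate step I anticipate is the density argument that repositions $Y_0$ inside $\domain(B)$; this is the sole place where density of $\domain(B)$ plays a role. Everything else is straightforward bookkeeping with the pseudo-inverse identities of Theorem~\ref{thm:equivalence_of_fredholm}, and the sequential characterization of semi-Fredholmness quoted after that theorem need not be invoked at all.
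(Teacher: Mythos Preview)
Your proof is correct. The paper does not actually prove this lemma---it simply cites \cite{schechter1976remarks}---so there is no in-text argument to compare against. Your approach via the decomposition $Y = \range(A) \oplus Y_0$ and the identity $\range(B) = \range(BA) + B(Y_0)$ is the standard one and goes through cleanly.

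One cosmetic point worth tightening: after you reposition $Y_0$ inside $\domain(B)$, the operators $A_0$ and $K_2$ handed to you by Theorem~\ref{thm:equivalence_of_fredholm} were built for the \emph{original} complement, so strictly speaking you are relying on the (true, but not literally stated) fact that the pseudo-inverse construction works for \emph{any} finite-dimensional complement of $\range(A)$. You can sidestep this entirely by dropping $A_0$ from the range step: for $y \in \domain(B)$ write $y = r + y_0$ with $r \in \range(A)$ and $y_0 \in Y_0 \subset \domain(B)$, observe $r = y - y_0 \in \domain(B)$, choose any $x \in \domain(A)$ with $Ax = r$, and conclude $x \in \domain(BA)$ and $By = BAx + By_0$. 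This uses only the direct-sum decomposition, not the pseudo-inverse identities.
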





One of the theorems that we use throughout this article is the following consequence of the Closed Graph Theorem. A proof of the Closed Graph Theorem can be found in many functional analysis textbooks, such as \cite[p. 62]{schechter2002principles} or \cite[p. 166]{kato1995perturbation}. 


\begin{lem}\label{lem:equivalence_of_norms}
Let $X$ be a Banach space and $Y \subset X$. If there exists a norm that converts $Y$ into a Banach space then there exists a $c > 0$ such that $\|y\|_X \leq c \|y\|_Y$ for all $y \in Y$. If $Y = X$ then their norms are equivalent.
\end{lem}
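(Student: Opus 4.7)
The plan is to invoke the Closed Graph Theorem on the set-theoretic inclusion map $\iota: (Y, \|\cdot\|_Y) \to (X, \|\cdot\|_X)$ defined by $y \mapsto y$. Both halves of the lemma follow from this single observation, with the second half using in addition the Bounded Inverse Theorem. The only genuine verification needed is that $\iota$ has closed graph; after that, everything is a direct citation.

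For the first claim, I would proceed as follows. The map $\iota$ is linear and defined on all of $Y$, which is a Banach space under $\|\cdot\|_Y$ by hypothesis. To verify closedness, suppose $\{y_n\} \subset Y$ satisfies $y_n \to y$ in $\|\cdot\|_Y$ and $\iota(y_n) = y_n \to x$ in $\|\cdot\|_X$. Since $Y \subset X$ refers to the same underlying elements viewed under two norms, the two limits must agree as points of $X$, giving $x = y = \iota(y)$. Hence the graph of $\iota$ is closed, the Closed Graph Theorem yields $\iota \in \mathcal{B}((Y,\|\cdot\|_Y),(X,\|\cdot\|_X))$, and the operator norm of $\iota$ serves as the constant $c$ in the desired inequality $\|y\|_X \leq c \|y\|_Y$.

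For the second claim, if $Y = X$ as sets then $\iota$ is a continuous bijection between the two Banach spaces $(X, \|\cdot\|_Y)$ and $(X, \|\cdot\|_X)$, continuity coming from the first part. The Bounded Inverse Theorem — a standard corollary of the Open Mapping Theorem, which the excerpt lists in the same circle of ideas as the Closed Graph Theorem — then guarantees that $\iota^{-1}$ is bounded as well. This supplies a reverse inequality $\|y\|_Y \leq c' \|y\|_X$, and the two bounds together give the equivalence of norms.

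The main conceptual obstacle is really just the closedness of $\iota$. It relies on the convention built into the notation $Y \subset X$: that elements common to $Y$ and $X$ are identified as the same point of the ambient space, so that a sequence cannot converge to different limits under the two norms. Without this compatibility, no comparison of the norms would even be meaningful; once it is granted, the lemma reduces to invocations of the Closed Graph Theorem and the Bounded Inverse Theorem, exactly matching the excerpt's framing of the result as "a consequence of the Closed Graph Theorem."
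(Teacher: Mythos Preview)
Your proposal is correct and matches the paper's approach: apply the Closed Graph Theorem to the inclusion $\iota:(Y,\|\cdot\|_Y)\to(X,\|\cdot\|_X)$ to get the first inequality. For the second claim the paper simply reapplies the same argument to the reverse inclusion $(X,\|\cdot\|_X)\to(Y,\|\cdot\|_Y)$ rather than invoking the Bounded Inverse Theorem, but this is a cosmetic difference within the same Open Mapping/Closed Graph circle of results.
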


\begin{proof}
Let $\iota$ denote the inclusion map from $Y$ to $X$. It is a closed operator with domain equal to $Y$, so by the Closed Graph Theorem it is bounded. If $Y=X$ then apply the above argument to the inclusion map from $X$ to $Y$.
\end{proof}

The above lemma is useful for showing that special subsets of $L^p(\Omega)$ have certain properties --- such as compactness --- since they can inherit such properties from other Sobolev spaces. One of the most important consequences of compactness is the following theorem. 

\begin{thm}\label{thm:compactness equals closed range}
Let $X$ and $Y$ be Banach spaces. If $A \in \mathcal{C}(X,Y)$ with $\domain(A) \subset\subset X$, then $A \in \mathcal{F}_+(X,Y)$.
\end{thm}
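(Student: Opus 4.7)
The plan is to apply the semi-Fredholm characterization recalled at the end of section~\ref{subsection:fredholm}: an operator $A \in \mathcal{C}(X,Y)$ fails to be semi-Fredholm if and only if one can produce a sequence $\{x_k\} \subset \domain(A)$ that is bounded in $X$, has no $X$-convergent subsequence, yet whose image $\{Ax_k\}$ converges in $Y$. Thus I will prove the theorem contrapositively by ruling out any such pathological sequence.

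First I would take any sequence $\{x_k\} \subset \domain(A)$ that is bounded in the $X$-norm and for which $\{Ax_k\}$ converges in $Y$. Since convergent sequences are bounded, the image sequence is also bounded in $Y$, so
\begin{align*}
\|x_k\|_{\domain(A)} = \|x_k\|_X + \|Ax_k\|_Y
\end{align*}
is uniformly bounded. This says $\{x_k\}$ is a bounded subset of the Banach space $\domain(A)$ equipped with its graph norm.

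Now I would invoke the compact embedding $\domain(A) \subset\subset X$: by definition this means the inclusion $\iota : \domain(A) \to X$ is a compact operator, so the bounded sequence $\{x_k\} \subset \domain(A)$ has a subsequence that converges in $X$. This immediately forbids the pathological sequence described in the semi-Fredholm characterization, so $A \in \mathcal{F}_+(X,Y)$.

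There is no real obstacle — the theorem is essentially a reformulation of the compact embedding in the language of the sequential characterization of semi-Fredholmness, and the only subtle point is the mild sleight of hand that a sequence bounded in $X$ with $\{Ax_k\}$ convergent is automatically bounded in the graph norm (which is what lets the compact embedding hypothesis apply). One could alternatively argue directly that $\nul(A)$ is finite dimensional (its $X$-unit ball coincides with its graph-norm unit ball and hence is $X$-precompact, forcing finite dimension via F.~Riesz) and that $\range(A)$ is closed (by passing to a closed complement of $\nul(A)$ and using the compact embedding to extract a limit), but the one-line reduction through the sequential characterization is cleaner and fits the development in the excerpt.
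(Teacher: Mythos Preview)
Your proof is correct and follows essentially the same approach as the paper: both invoke the sequential characterization of semi-Fredholmness from section~\ref{subsection:fredholm}, observe that a sequence bounded in $X$ with $\{Ax_k\}$ convergent is bounded in the graph norm, and then use the compact embedding $\domain(A)\subset\subset X$ to extract an $X$-convergent subsequence. The paper phrases this as a proof by contradiction while you phrase it contrapositively, but the content is identical.
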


\begin{proof}
We prove this by contradiction. Suppose $A$ is not semi-Fredholm. Since $A \in \mathcal{C}(X, Y)$, this implies there exists a bounded sequence $\{x_n \}\subset \domain(A)$ having no convergent subsequence, such that $\{Ax_n \}$ is convergent in $Y$. But if $\{x_n\}$ is bounded in $X$ and $\{A x_n\}$ is convergent in $Y$, then $\{x_n\}$ is a bounded sequence in the $A$-norm. Since $\domain(A) \subset\subset X$ there exists a subsequence of $\{x_n\}$ that is convergent in $X$, the desired contradiction.
\end{proof}

\begin{rem}
We know an operator $A \in \mathcal{C}(X,Y)$ has closed range if and only if there exists a $c > 0$ such that
\begin{align*}
\inf_{z \in \nul(A)} \|x - z\|_{X} \leq c \|Ax\|_{Y}, \qquad \text{for all } x \in \domain(A).
\end{align*}
Theorem~\ref{thm:compactness equals closed range} can be used to quickly establish estimates involving differential operators. We can, for example, establish Poincar\'e inequalities.

It is also well known that $W^{1,p}_{0}(\Omega) \subset\subset L^p(\Omega)$ for any bounded and open set $\Omega \subset \R^d$. Since the weak gradient operator $\nabla$ is closed on $W^{1,p}_0(\Omega)$, we get $\range(\nabla)$ is closed by Theorem~\ref{thm:compactness equals closed range}. This implies the existence of a $c > 0$ such that
\begin{align*}
\|u\|_{L^p(\Omega)} = \inf_{a \in \nul(\nabla)} \|u - a\|_{L^p(\Omega)} \leq c\|\nabla u\|_{L^p(\Omega)},
\end{align*}
for any $u \in W^{1,p}_0(\Omega)$.
\end{rem}

\section{Properties of vanishing operators}\label{section:properties_vanishing}

\subsection{The domain of a vanishing operator}\label{subsection:domain_vanishing}

In this section we establish properties of the domain of the vanishing operator $\varphi : L^p(\Omega) \to W^{k,p}(\Omega)$. In particular, we will establish the embedding of the domain of $\varphi$ in various Sobolev spaces. 

If $\varphi \in \mathscr{C}^1(\bar{\Omega})$, then the mapping $u \mapsto \varphi u$ is bounded from $L^p(\Omega)$ to $L^p(\Omega)$. This implies that a natural choice for its domain is $L^p(\Omega)$. Whenever a vanishing operator $\varphi$ is composed with a differential operator $A$ to form $A\varphi$ --- $A$ being an operator that is closed on $W^{k,p}(\Omega)$ --- it makes sense to think of $\varphi$ as a densely defined operator that maps some subset of $L^p(\Omega)$ to the space $W^{k,p}(\Omega)$.

This and subsequent sections rely heavily on Hardy's inequality, so we include the statement for the reader's convenience.

\begin{thm}[\cite{wannebo1990hardy}, Hardy's Inequality]\label{thm:hardys_inequality}
Let $\Omega \subset \R^d$ be a bounded open set with $\mathscr{C}^{0,1}$ boundary, and $\delta(x) = \inf_{y \in \partial \Omega} |x - y|$. Then, for all $u \in \mathscr{C}^{\infty}_{0}(\Omega)$,
\begin{align*}
\| \delta^{-m} u\|_{L^p(\Omega)}
\leq c \| \nabla^{(m)} u \|_{L^{p}(\Omega)},
\end{align*}
where $c>0$ depends on $\Omega$, $p$, $d$, and $m$.
\end{thm}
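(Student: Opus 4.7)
The plan is to combine three standard ingredients: (a) a partition of unity separating the near-boundary region from a region where $\delta$ is bounded below, (b) a bi-Lipschitz flattening of $\partial \Omega$ reducing the problem to the upper half-space, and (c) the one-dimensional Hardy inequality applied and iterated in the normal coordinate.

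First I would localize. Since $\partial \Omega$ is $\mathscr{C}^{0,1}$ and compact, cover $\bar{\Omega}$ by finitely many open sets $U_0, U_1, \ldots, U_N$ with $U_0 \subset\subset \Omega$ and, for $j \geq 1$, $U_j$ equipped with a bi-Lipschitz map $\Psi_j$ carrying $U_j \cap \bar{\Omega}$ onto $Q^{+} \coloneqq \{x \in Q : x_d > 0\}$ for some cube $Q$, with $\Psi_j(\partial \Omega \cap U_j) \subset \{x_d = 0\}$. Pick a subordinate partition of unity $\{\chi_j\}$, and use the key geometric fact that after flattening, $\delta(x)$ is comparable to $x_d$ with constants depending only on the Lipschitz seminorm of the defining function. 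On $U_0$ we have $\delta \geq \delta_0 > 0$, so $\|\delta^{-m} \chi_0 u\|_p \leq C\|u\|_p$ trivially, and it suffices to establish the inequality for each $\chi_j u$, $j \geq 1$, transported via $\Psi_j$ to a function on $Q^{+}$ that vanishes near $\{x_d = 0\}$ (which holds since $u \in \mathscr{C}^\infty_0(\Omega)$).

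Next I would prove the model case on $\R^d_+$: for $v$ smooth and compactly supported in the open half-space, the goal is
\begin{align*}
\int_{\R^d_+} \frac{|v(x)|^p}{x_d^{mp}}\, dx \leq C \int_{\R^d_+} |\partial_d^m v(x)|^p\, dx.
\end{align*}
By Fubini this reduces to a one-dimensional weighted estimate for each fixed tangential slice. The $m=1$ case is the classical Hardy inequality, obtained by writing $v(x',t) = \int_0^t \partial_d v(x',s)\,ds$ and applying the one-variable Hardy inequality with constant $p/(p-1)$. For general $m$ I would iterate the weighted one-dimensional inequality
\begin{align*}
\int_0^{\infty} \frac{|w(t)|^p}{t^{kp}}\, dt \leq C_{k,p} \int_0^{\infty} \frac{|w'(t)|^p}{t^{(k-1)p}}\, dt,
\end{align*}
valid for $w \in \mathscr{C}^{\infty}_0(0,\infty)$, applied successively with $w = v, \partial_d v, \ldots, \partial_d^{m-1} v$ (each of which again lies in $\mathscr{C}^{\infty}_0(0,\infty)$ in the slice) and $k$ running from $m$ down to $1$. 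This yields $\|v/x_d^m\|_p \leq C\|\partial_d^m v\|_p \leq C\|\nabla^{(m)} v\|_p$.

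Finally I would reassemble: pull the boundary estimates back to $\Omega$ (the bi-Lipschitz maps preserve $L^p$ norms and weak derivatives up to constants), and sum over $j$. The main obstacle is the last step, because differentiating $\chi_j u$ produces terms like $\|\nabla^{(\ell)} u\|_p$ for $\ell < m$, which must be absorbed into $\|\nabla^{(m)} u\|_p$. For $u \in \mathscr{C}^{\infty}_0(\Omega)$ this absorption would be handled either by Gagliardo--Nirenberg interpolation together with the compact-support Poincar\'e inequality, or, more cleanly, by a simultaneous induction on $m$ that proves $\|\delta^{-\ell} u\|_p \leq c\|\nabla^{(\ell)} u\|_p$ for every $\ell \leq m$; at each stage the already-established lower-order estimates convert the commutator terms from $\chi_j$ into quantities that can be controlled by $\|\nabla^{(m)} u\|_p$, closing the induction.
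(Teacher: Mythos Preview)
The paper does not prove this theorem; it is quoted from \cite{wannebo1990hardy} as a known result and used as a black box, so there is no argument in the paper to compare against.

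Your outline is sound for $m=1$, but for $m\geq 2$ there is a genuine gap in the step ``pull the boundary estimates back to $\Omega$ (the bi-Lipschitz maps preserve $L^p$ norms and weak derivatives up to constants).'' A bi-Lipschitz change of variables preserves $L^p$ norms and \emph{first}-order weak derivatives, but not higher-order ones: expanding $\partial_d^{\,m}\big((\chi_j u)\circ\Psi_j^{-1}\big)$ by the chain rule produces terms containing derivatives of $\Psi_j^{-1}$ of order up to $m$, and these need not exist when $\partial\Omega$ is merely $\mathscr{C}^{0,1}$. Hence the half-space estimate $\|v/x_d^{\,m}\|_p\leq C\|\partial_d^{\,m}v\|_p$ cannot be transported back to $\Omega$ as written. (Your scheme would work verbatim if $\partial\Omega$ were $\mathscr{C}^m$, but the statement is for Lipschitz boundary.)

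The repair is to rearrange so that flattening is only ever used for a first-order weighted inequality, and the passage to order $m$ is done by iteration on $\Omega$ itself. Concretely, use your localization and the one-dimensional weighted Hardy inequality to prove that for every $k\geq 1$ and every $w\in\mathscr{C}^\infty_0(\Omega)$,
\[
\|\delta^{-k}w\|_{L^p(\Omega)}\leq c_k\,\|\delta^{-(k-1)}\nabla w\|_{L^p(\Omega)};
\]
this requires only one application of the chain rule, which survives bi-Lipschitz transport, and the commutator term $\|\delta^{-(k-1)}w\nabla\chi_j\|_p$ is absorbed by induction on $k$ (exactly the induction you already suggested). Then, since $u\in\mathscr{C}^\infty_0(\Omega)$ forces every $D^\alpha u\in\mathscr{C}^\infty_0(\Omega)$, chain these first-order estimates with $k=m,m-1,\ldots,1$ to obtain
\[
\|\delta^{-m}u\|_p\leq c\,\|\delta^{-(m-1)}\nabla u\|_p\leq\cdots\leq c\,\|\nabla^{(m)}u\|_p,
\]
which is the desired inequality.
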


See \cites{lehrback2014weighted, hajlasz1999pointwise} for recent developments on the assumptions necessary for Hardy's inequality. The interested reader should consult \cite[\S 2.7]{maz2011sobolev} for a treatment of optimal constants for Hardy's inequality.
 
We begin with basic properties of the domain and range of the multiplication operator $u \mapsto \varphi^m u$.


\begin{lem}\label{lem:statement of range}
Let $\Omega \subset \R^d$ be open and bounded and $\varphi \in \mathscr{C}^1(\bar{\Omega})$ be simply vanishing on $\partial \Omega$. For each $k,m \in \Z_+$, the multiplication operator $\varphi^m : L^p(\Omega) \to W^{k,p}(\Omega)$ defined as $u \mapsto \varphi^m u$ is closed on 
\begin{align*}
\domain(\varphi^m) = \{ u \in L^p(\Omega) : \varphi^m u \in W^{k,p}(\Omega) \}.
\end{align*}
Moreover, if $\varphi \in \mathscr{C}^m(\bar{\Omega})$ and $m \leq k$ then $\range(\varphi^m) = W^{k,p}(\Omega) \cap W^{m,p}_{0}(\Omega)$.
\end{lem}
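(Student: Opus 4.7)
The plan is to dispatch closedness first, then split the range identity into its two inclusions, handling the easy direction with Hardy's inequality and the hard direction with a cut-off approximation.

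For closedness, since $\varphi\in\mathscr{C}^1(\bar\Omega)$ the multiplication $u\mapsto\varphi^m u$ is a bounded operator on $L^p(\Omega)$, and $W^{k,p}(\Omega)$ embeds continuously in $L^p(\Omega)$. So if $\{u_n\}\subset\domain(\varphi^m)$, $u_n\to u$ in $L^p$, and $\varphi^m u_n\to v$ in $W^{k,p}$, then both limits push into $L^p$ and uniqueness of the $L^p$-limit forces $v=\varphi^m u$ a.e.; since $v\in W^{k,p}$, $u\in\domain(\varphi^m)$ and the graph is closed.

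For the inclusion $W^{k,p}(\Omega)\cap W^{m,p}_0(\Omega)\subset\range(\varphi^m)$ (now using $\varphi\in\mathscr{C}^m(\bar\Omega)$ and $k\geq m$), given such a $v$, set $u:=v/\varphi^m$ pointwise in $\Omega$. Simple vanishing of $\varphi$ combined with continuity and strict positivity of $\varphi$ on compact subsets of $\Omega$ yields a constant $c>0$ with $\varphi\geq c\,\delta$ throughout $\Omega$, where $\delta(x):=\dist(x,\partial\Omega)$. Hence $|u|\leq c^{-m}\delta^{-m}|v|$, and Hardy's inequality (Theorem~\ref{thm:hardys_inequality}) applied to $v\in W^{m,p}_0$ gives $\delta^{-m}v\in L^p$, so $u\in L^p$. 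Since $\varphi^m u=v\in W^{k,p}$, $u\in\domain(\varphi^m)$ and $v\in\range(\varphi^m)$.

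For the reverse inclusion $\range(\varphi^m)\subset W^{k,p}(\Omega)\cap W^{m,p}_0(\Omega)$, only $v:=\varphi^m u\in W^{m,p}_0$ requires proof. Introduce cut-offs $\eta_\epsilon\in\mathscr{C}^\infty(\bar\Omega)$ with $\eta_\epsilon\equiv 0$ on $\{\delta<\epsilon\}$, $\eta_\epsilon\equiv 1$ on $\{\delta>2\epsilon\}$, and $|D^\beta\eta_\epsilon|\leq C\epsilon^{-|\beta|}$. Each $\eta_\epsilon v$ is compactly supported in $\Omega$ and therefore, after mollification, lies in $W^{m,p}_0$. The core step is to show $\eta_\epsilon v\to v$ in $W^{m,p}(\Omega)$: expanding $D^\alpha((1-\eta_\epsilon)v)$ via Leibniz for $|\alpha|\leq m$, the diagonal term $(1-\eta_\epsilon)D^\alpha v$ tends to $0$ in $L^p$ by dominated convergence, while each cross term $D^\beta\eta_\epsilon\,D^{\alpha-\beta}v$ with $0<|\beta|\leq|\alpha|$ is supported in the strip $S_\epsilon:=\{\epsilon\leq\delta\leq 2\epsilon\}$; the $\epsilon^{-|\beta|}$ blow-up of $D^\beta\eta_\epsilon$ should be absorbed by the $\delta^{m-|\gamma|}$ decay of $D^\gamma\varphi^m$ arising in the Leibniz expansion of $D^{\alpha-\beta}(\varphi^m u)$ on compact subsets of $\Omega$, where $u=v/\varphi^m$ is as regular as $v$.

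The main obstacle is precisely this last step. Since $u\in L^p$ need not possess any global weak derivatives, the Leibniz expansion of $D^{\alpha-\beta}v$ only makes literal sense on compact subsets of $\Omega$, and assembling globally $L^p$-integrable dominants on $S_\epsilon$ requires care. The cleanest route is an induction on $m$: in the base case $m=1$ the estimate $|\nabla\eta_\epsilon\,\varphi u|\leq C|u|\mathbf{1}_{S_\epsilon}$ (using $|\varphi|\leq C\delta\leq 2C\epsilon$ on $S_\epsilon$) tends to $0$ in $L^p$ by dominated convergence on the shrinking strip, and the inductive step combines $|D^\gamma\varphi^m|\leq C\delta^{m-|\gamma|}$ with interior Leibniz and careful bookkeeping of the $\varphi^j D^{m-j}u$-type quantities generated when differentiating $v$.
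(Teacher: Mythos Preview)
Your closedness argument and the Hardy-based inclusion $W^{k,p}\cap W^{m,p}_0 \subset \range(\varphi^m)$ are correct and match the paper's. For the reverse inclusion the paper takes a different route: rather than cutting off $v=\varphi^m u$ with a generic $\eta_\epsilon$, it approximates the \emph{multiplier} $\varphi^m$ itself by $\phi_n \in \mathscr{C}^\infty_0(\Omega)$ chosen so that $\phi_n \to \varphi^m$ in $W^{m,p}$ with $\{D^\alpha\phi_n\}_{|\alpha|\leq m}$ uniformly bounded in $L^\infty$ (possible precisely because $\varphi^m$ vanishes to order $m$ on $\partial\Omega$; the footnoted choice $\phi_n=(\varphi^m\mathbf{1}_{\Omega_n})*\eta_{1/3n}$ works), and then argues $\phi_n u \to \varphi^m u$ in $W^{m,p}$ by dominated convergence. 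The point is that the $\epsilon^{-|\beta|}$ blow-up you encounter with a generic cutoff is already absorbed into the decay of the adapted approximant.

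Your induction on $m$ is a legitimate alternative, and the base case is clean. But the inductive step as sketched has a real gap. For $m\geq 2$ the troublesome cross terms are those with $0<|\beta|<|\alpha|$, and controlling $\epsilon^{-|\beta|}|D^{\alpha-\beta}v|$ on $S_\epsilon$ amounts to showing $\delta^{-|\beta|}D^{\alpha-\beta}v \in L^p$. Expanding $D^{\alpha-\beta}v$ by interior Leibniz produces terms $D^\gamma(\varphi^m)\,D^{\alpha-\beta-\gamma}u$, and after dividing by $\delta^{|\beta|}\sim\varphi^{|\beta|}$ you are left needing $\varphi^{m-|\beta|-|\gamma|}D^{\alpha-\beta-\gamma}u \in L^p$; in particular (take $\gamma=0$) you need $\varphi^{m-|\beta|}D^{\alpha-\beta}u \in L^p$ with $|\alpha-\beta|\geq 1$. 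This does not follow from $u\in L^p$ alone, nor from a plain induction on $m$: from $D^{\alpha-\beta}(\varphi^m u)\in L^p$ and Leibniz one only extracts $\varphi^{m}D^{\alpha-\beta}u\in L^p$, which is one power of $\varphi$ too many. To close the loop you must strengthen the inductive hypothesis to carry the full family of weighted quantities $\varphi^{j}D^\beta u$ (or, equivalently, $\delta^{-(m-|\alpha|)}D^\alpha v$) through the induction. The ``careful bookkeeping'' you mention is therefore the substance of the argument, not a formality, and the proposal does not yet supply it.
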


\begin{proof}
The proof is broken into two claims.

\begin{claim}{1}
The multiplication operator $u \mapsto \varphi^m u$ is closed on $\domain(\varphi^m)$.
\end{claim}

We first show it is closable. Suppose $u_n \to 0$ in $L^p(\Omega)$ and $\varphi^m u_n \to y$ in $W^{k,p}(\Omega)$. Then we know that $\varphi^m u_n \to y$ in $L^p(\Omega)$. But since $\varphi$ is bounded and $u_n \to 0$ in $L^p(\Omega)$ we can conclude $\varphi^m u_n \to 0 = y$ in $L^p(\Omega)$. This shows the multiplication operator $\varphi^m$ is closable on its domain. But any closed extension cannot be defined on a set larger than $\domain(\varphi^m)$, implying the domain of any closed extension must be $\domain(\varphi^m)$. This completes the proof of the claim.

\begin{claim}{2}
If $\varphi \in \mathscr{C}^m(\bar{\Omega})$ and $m \leq k$ then $\range(\varphi^m) = W^{k,p}(\Omega) \cap W^{m,p}_{0}(\Omega)$.
\end{claim}

Take $v \in W^{k,p}(\Omega) \cap W^{m,p}_{0}(\Omega)$. Since $v \in W^{m,p}_0(\Omega)$ we can apply Hardy's inequality (Theorem~\ref{thm:hardys_inequality}) to show that $\varphi^{-m} v \in L^p(\Omega)$. Since this implies $\varphi^{-m} v \in \domain(\varphi^m)$ we have $W^{k,p}(\Omega) \cap W^{m,p}_{0}(\Omega) \subset \range(\varphi^m)$.

For the other direction, first note that since $\varphi^m \in \mathscr{C}^m(\bar{\Omega}) \cap W^{m,p}_0(\Omega)$ there exists a sequence $\{\phi_n\} \subset \mathscr{C}^{\infty}_0(\Omega)$ such that $\phi_n \to \varphi^m$ in $W^{m,p}(\Omega)$ and $\{D^{\alpha}\phi_n\}$ is uniformly bounded when $|\alpha| \leq m$\footnote{One such example are the functions $\phi_{n} = \varphi^m \mathbf{1}_{\Omega_{n}} * \eta_{1 / 3n}$, where $\mathbf{1}_{\Omega_n}$ is an indicator function for the set $\Omega_{n}= \{x \in \Omega : \text{dist}(x, \partial \Omega) > 1/n \}$ and $\eta_{\epsilon}$ is a mollifier.}. Let $u \in \domain(\varphi^m)$ be arbitrary, and set $v_n = \phi_n - \varphi^m$. Since $D^{\alpha}(\phi_n u)$ is bounded by $cD^{\alpha}(\varphi u)$ for some constant $c$ and $|D^{\alpha}(v_n u)|^p \to 0$ almost everywhere when $|\alpha| \leq m$ we have
\begin{align*}
\lim_{n\to\infty} \|D^{\alpha} (\phi_n u) - D^{\alpha} (\varphi^m u) \|_{L^p(\Omega)}
=\lim_{n\to\infty} \|D^{\alpha}  (v_n u ) \|_{L^p(\Omega)} = 0,
\end{align*}
when $|\alpha| \leq m$ by dominated convergence. Noting that $\phi_n u \in W^{m,p}_0(\Omega)$ shows $\varphi^m u \in W^{m,p}_0(\Omega)$ and completes the proof.
\end{proof}

The following lemma establishes the relative compactness of $\domain(\varphi^m)$ in $L^p(\Omega)$ when $\varphi^m$ maps to $W^{k,p}(\Omega)$ for $k > m$. It uses the relative compactness of $W^{m+1,p}(\Omega)$ in $W^{m,p}(\Omega)$. This is implied by the Rellich-Kondrachov Theorem, which establishes that for $1 \leq p < \infty$, $W^{1,p}(\Omega)$ is compactly embedded in $L^p(\Omega)$ whenever $\Omega$ is a bounded domain with Lipshitz continuous boundary. See \cite[p. 168]{adams2003sobolev} Thoerem 6.3 for the full statement and proof of the Rellich-Kondrachov Theorem.

\begin{lem}\label{lem:compactness implies compactness almost}
Let $\Omega \subset \R^d$ be open and bounded with $\mathscr{C}^{0,1}$ boundary and let $\varphi \in \mathscr{C}^{1}(\bar{\Omega})$ be simply vanishing on $\partial \Omega$. If
\begin{align*}
\domain(\varphi^m) = \{u \in L^p(\Omega) : \varphi^m u \in W^{m+1,p}(\Omega) \}
\end{align*}
then $\domain(\varphi^m) \subset\subset L^p(\Omega)$.
\end{lem}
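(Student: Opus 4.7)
The plan is to combine Rellich--Kondrachov compactness of $W^{m+1,p}(\Omega) \subset\subset W^{m,p}(\Omega)$ with Hardy's inequality (Theorem~\ref{thm:hardys_inequality}) in order to convert strong convergence of $\varphi^m u_n$ into strong convergence of $u_n$. Throughout I write $\delta(x) := \text{dist}(x,\partial\Omega)$ and set $v_n := \varphi^m u_n$.

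First I take $\{u_n\} \subset \domain(\varphi^m)$ bounded in the graph norm, so that $\{u_n\}$ is bounded in $L^p(\Omega)$ and $\{v_n\}$ is bounded in $W^{m+1,p}(\Omega)$. Because $\partial\Omega$ is Lipschitz, Rellich--Kondrachov yields a subsequence (not relabeled) with $v_n \to v$ strongly in $W^{m,p}(\Omega)$ for some $v \in W^{m,p}(\Omega)$. Simple vanishing of $\varphi$ combined with its continuity on $\bar\Omega$ furnishes a constant $c>0$ with $\varphi(x) \geq c\,\delta(x)$ on all of $\Omega$; this global comparison will be used in the final step.

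The main obstacle is proving $v_n \in W^{m,p}_0(\Omega)$ for every $n$, so that the difference $v_n - v$ also lies in $W^{m,p}_0(\Omega)$ (which is closed in $W^{m,p}$) and Hardy's inequality becomes available. I would verify this by a cutoff approximation. Pick $\chi_\epsilon \in \mathscr{C}^\infty_0(\Omega)$ with $\chi_\epsilon = 1$ on $\Omega_{2\epsilon} := \{x \in \Omega : \delta(x) > 2\epsilon\}$, $0\leq\chi_\epsilon\leq 1$, and $|D^\gamma\chi_\epsilon| \leq C\epsilon^{-|\gamma|}$. Then $\chi_\epsilon v_n$ has compact support in $\Omega$, hence belongs to $W^{m,p}_0(\Omega)$; it suffices to show $\chi_\epsilon v_n \to v_n$ in $W^{m,p}(\Omega)$ as $\epsilon\to 0$. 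Expanding $D^\alpha((1-\chi_\epsilon)v_n)$ by Leibniz for $|\alpha| \leq m$, the term $(1-\chi_\epsilon) D^\alpha v_n$ is handled by dominated convergence. The remaining terms $D^\beta\chi_\epsilon\cdot D^{\alpha-\beta}v_n$ with $|\beta|\geq 1$ are supported on the shell $S_\epsilon = \{\epsilon < \delta < 2\epsilon\}$, where the blow-up $|D^\beta\chi_\epsilon| \leq C\epsilon^{-|\beta|}$ must be absorbed by the decay $|D^{\alpha-\beta}v_n| \lesssim \delta^{m-|\alpha-\beta|}$ on $S_\epsilon$. This decay is the delicate point: it follows from $v_n = \varphi^m u_n$ together with $\varphi^m \sim \delta^m$ and requires tracking the Leibniz expansion of $D^{\alpha-\beta}(\varphi^m u_n)$, using that $D^\eta(\varphi^m) \sim \delta^{m-|\eta|}$ from simple vanishing and using the $L^p$-bound on $u_n$ to absorb the cutoff factor.

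Once $v_n - v \in W^{m,p}_0(\Omega)$ with $\|v_n - v\|_{W^{m,p}(\Omega)} \to 0$, Theorem~\ref{thm:hardys_inequality} gives
\begin{align*}
\|\delta^{-m}(v_n - v)\|_{L^p(\Omega)} \leq c\,\|\nabla^{(m)}(v_n - v)\|_{L^p(\Omega)} \longrightarrow 0.
\end{align*}
Setting $u := v/\varphi^m$, the pointwise estimate $|u_n - u| = |v_n - v|/\varphi^m \leq c^{-m}\,\delta^{-m}|v_n - v|$ on $\Omega$ yields $\|u_n - u\|_{L^p(\Omega)} \to 0$. This produces a subsequence of $\{u_n\}$ convergent in $L^p(\Omega)$ and proves the compact embedding $\domain(\varphi^m) \subset\subset L^p(\Omega)$.
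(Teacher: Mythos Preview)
Your overall strategy matches the paper's: Rellich--Kondrachov gives a $W^{m,p}$-convergent subsequence of $v_n=\varphi^m u_n$, and Hardy's inequality together with $\varphi\ge c\,\delta$ converts this to $L^p$-convergence of $u_n$. The paper records this in one line via the Cauchy-sequence bound $\|u_n-u_k\|_{L^p}\le c\|\varphi^m u_n-\varphi^m u_k\|_{W^{m,p}}\to 0$, which also saves you the separate check that $u:=v/\varphi^m$ lies in $L^p$.

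Where your argument has a gap is the cutoff justification of $v_n\in W^{m,p}_0(\Omega)$. Your proposed mechanism---Leibniz expand $D^{\alpha-\beta}(\varphi^m u_n)$ and combine $D^{\eta}(\varphi^m)\sim\delta^{\,m-|\eta|}$ with the $L^p$ bound on $u_n$---does not close: every term in that expansion with $\gamma\neq\alpha-\beta$ carries a factor $D^{\alpha-\beta-\gamma}u_n$, and you have no $L^p$ control on derivatives of $u_n$ near $\partial\Omega$ (only $u_n\in L^p$ is assumed, and its interior derivatives may blow up at the boundary). Only the single top-order term $D^{\alpha-\beta}(\varphi^m)\cdot u_n$ is actually handled by your stated ingredients. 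In the paper this step is not redone from scratch; it rests on Lemma~\ref{lem:statement of range}, whose Claim~2 already shows $\range(\varphi^m)\subset W^{m,p}_0(\Omega)$ by approximating $\varphi^m$ itself (rather than $v_n$) by $\mathscr{C}^\infty_0$ functions with uniformly bounded derivatives up to order $m$ and applying dominated convergence. Replace the cutoff paragraph with that citation and the proof goes through.
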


\begin{proof}
Since $\Omega$ is bounded and $\partial \Omega$ is $\mathscr{C}^{0,1}$, we can use the Rellich-Kondrachov Theorem to establish $W^{m+1,p}(\Omega) \subset \subset W^{m,p}(\Omega)$. Suppose $\{ u_n \} \subset \domain(\varphi^m)$ is such that $\|u_n\|_{\domain(\varphi^m)} \leq 1$ for each $n$. Then $\| \varphi^m u_n\|_{W^{m+1,p}(\Omega)} \leq 1$ for each $n$, so there exists a subsequence that is convergent in $W^{m,p}(\Omega)$. After relabeling the convergent subsequence, we take this to be the entire sequence. Applying Hardy's inequality (Theorem~\ref{thm:hardys_inequality}) yields
\begin{align*}
\lim_{n,k\to \infty} \|u_n - u_k\|_{L^{p}(\Omega)} 
    \leq \lim_{n,k\to \infty}  c \| \varphi^m u_n - \varphi^m u_k\|_{W^{m,p}(\Omega)} = 0,
\end{align*}
completing the proof.
\end{proof}

Next we establish the embedding of $\domain(\varphi)$ in various Sobolev spaces. To do so, we will use the fact that when $\Omega \subset \R^d$ is open and bounded with $\mathscr{C}^k$ boundary, the map 
\[
    u \mapsto u |_{\partial \Omega}
\]
from $\mathscr{C}^k(\bar{\Omega})$ to $\mathscr{C}^{k}(\partial \Omega)$ can be extended to a continuous surjective linear map from $W^{k,p}(\Omega)$ to $W^{k-1/p,p}(\partial \Omega)$ where $1 < p < \infty$ (see \cite[p. 158]{demengel2012functional} Theorem 3.79). 


We would like to highlight the fact that the trace map $T$ on $W^{k,p}(\Omega)$ is defined on a Banach space and has range that is onto the Banach space $W^{k-1/p,p}(\partial \Omega)$. As for the nullspace of $T$, a classical result states that when $\partial \Omega$ is $\mathscr{C}^1$,  $Tu = 0$ if and only if $u \in W^{1,p}_0(\Omega)$; (see \cite[p. 259]{evans1998partial} Theorem 2, or \cite[p. 138]{demengel2012functional} Corollary 3.46). 





Let $\Omega$ be an open and bounded set with $\mathscr{C}^k$ boundary and let $T$ denote the continuous trace operator from $W^{k,p}(\Omega)$ onto $W^{k-1/p,p}(\partial \Omega)$. We will need a trace-like operator that is one-to-one. To define this new operator, first set $W_0 \coloneqq W^{1,p}_0(\Omega) \cap W^{k,p}(\Omega) = \nul(T)$. Clearly $W_0$ is a closed subspace of $W^{k,p}(\Omega)$. Next let $\hat{W}^k$ denote the quotient space $W^{k,p}(\Omega)/W_0$ and define the operator $\hat{T}$ from $\hat{W}^k$ to $W^{k-1/p,p}(\partial \Omega)$ as
\begin{align*}
\hat{T}[u] = Tu, \qquad [u] \in \hat{W}^k.
\end{align*}
The operator $\hat{T}$ is well-defined, linear, and one-to-one. To see that $\hat{T}$ is closed, take $\{[u_n]\} \subset \hat{W}^k$ such that $[u_n] \to [u]$ and $\hat{T}[u_n] \to y$ as $n\to \infty$. Then $\| [u_n] - [u]\|_{\hat{W}^k} \to 0$ implies the existence of a sequence $\{v_n\} \subset W_0$ such that
\begin{align*}
u_n - v_n \to u, \qquad \text{in } W^{k,p}(\Omega).
\end{align*}
Since $\hat{T}[u_n] \to y$, we know that $T u_n = T(u_n - v_n) \to y$ as $n \to \infty$. By the boundedness of $T$ we get $T u = y$. This implies $\hat{T}[u] = y$ and proves that $\hat{T}$ is closed. Applying the Closed Graph Theorem shows that $\hat{T}$ is bounded. Moreover, the fact that $T$ is surjective implies that $\hat{T}$ is surjective as well. This tells us that $\hat{T}^{-1}$ exists and is a bounded linear operator from $W^{k-1/p,p}(\partial \Omega)$ onto $\hat{W}^k$, by the Bounded Inverse Theorem. 



We also need the following general Sobolev space theorem. We use $\lfloor a \rfloor$ to denote the integer part of $a$. 

\begin{thm}[\cite{adams2003sobolev} p. 85,  Sobolev Imbedding]\label{thm:general_inequality}
Let $\Omega \subset \R^d$ be open and bounded with a $\mathscr{C}^{0,1}$ boundary. Assume $u \in W^{k,p}(\Omega)$, and that $kp> d$. Set 
\begin{align*}
\kappa = k - \left\lfloor \tfrac{d}{p} \right\rfloor - 1, \qquad
\gamma = \left\{
\begin{array}{ll}
1 - \left( \tfrac{d}{p} - \left\lfloor \tfrac{d}{p} \right\rfloor\right), & \textnormal{if } \tfrac{d}{p} \not \in \N \\
\textnormal{any number in } (0,1), & \textnormal{otherwise.}
\end{array}\right.
\end{align*}
Then there exists a function $u^*$ such that $u^* = u$ a.e. and $u^* \in \mathscr{C}^{\kappa,\gamma}(\bar{\Omega})$.
\end{thm}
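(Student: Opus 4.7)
The plan is to reduce to the standard iterative proof of the Morrey--Sobolev embedding. The Lipschitz regularity of $\partial \Omega$ permits a continuous (Stein, or Calder\'on-type) extension operator $E : W^{k,p}(\Omega) \to W^{k,p}(\R^d)$ whose image consists of compactly supported functions; composing with this extension lets one work globally on $\R^d$, where the Gagliardo--Nirenberg--Sobolev and Morrey inequalities are easy to bootstrap. After producing the required H\"older control on $Eu$ over $\R^d$, one restricts to $\Omega$ and passes to a continuous representative $u^*$.

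Set $m := \lfloor d/p \rfloor$ and $\kappa := k-m-1$. Since $kp>d$ one has $k \geq m+1$, so $\kappa \geq 0$. For admissible $j$, define exponents $p_j$ by
\begin{align*}
\tfrac{1}{p_j} = \tfrac{1}{p} - \tfrac{j}{d}, \qquad \text{so that } p_j = \tfrac{dp}{d-jp}.
\end{align*}
Iterating the embedding $W^{1,q}(\R^d) \hookrightarrow L^{q^*}(\R^d)$ with $\tfrac{1}{q^*} = \tfrac{1}{q} - \tfrac{1}{d}$, valid whenever $q<d$, produces the chain
\begin{align*}
W^{k,p}(\R^d) \hookrightarrow W^{k-1,p_1}(\R^d) \hookrightarrow \cdots \hookrightarrow W^{k-j,p_j}(\R^d),
\end{align*}
for every $j$ with $jp < d$.

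When $d/p \notin \N$ one has $m<d/p$, so $p_m = dp/(d-mp)$ is finite, and $(m+1)p>d$ forces $p_m>d$. Applying Morrey's inequality $W^{1,q}(\R^d) \hookrightarrow \mathscr{C}^{0,1-d/q}(\R^d)$ to each derivative of $Eu$ of order at most $k-m-1 = \kappa$ then yields $Eu \in \mathscr{C}^{\kappa,\gamma}(\R^d)$ with
\begin{align*}
\gamma = 1 - \tfrac{d}{p_m} = (m+1) - \tfrac{d}{p} = 1 - \bigl(\tfrac{d}{p} - \lfloor \tfrac{d}{p} \rfloor\bigr),
\end{align*}
matching the stated exponent. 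When $d/p \in \N$ one iterates only $m-1$ times, arriving at $Eu \in W^{k-m+1,d}(\R^d)$; the critical embedding $W^{1,d}(\R^d) \hookrightarrow L^q(\R^d)$ for every $q \in [1,\infty)$ (proved, for instance, by truncation against the Trudinger inequality or by rearrangement) then gives $Eu \in W^{k-m,q}(\R^d)$ for any $q<\infty$, and choosing $q>d$ before applying Morrey one more time supplies $Eu \in \mathscr{C}^{\kappa,\gamma}(\R^d)$ for any preassigned $\gamma \in (0,1)$. Restricting to $\Omega$ and selecting the a.e.\ continuous representative $u^*$ concludes the argument in both cases.

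The principal obstacle is the borderline case $d/p \in \N$: since $W^{1,d}(\R^d) \not\hookrightarrow L^\infty(\R^d)$, one cannot reach $L^\infty$ in a single final Sobolev step and must instead trade a sliver of integrability for H\"older exponents arbitrarily close to, but strictly less than, $1$. A secondary technical issue is the construction of the extension operator itself; the Lipschitz hypothesis on $\partial \Omega$ is exactly what enables such an extension (Calder\'on--Stein), and without it the bootstrapping would have to be executed directly on $\Omega$ via a partition of unity and local flattening of the boundary --- available but considerably more delicate.
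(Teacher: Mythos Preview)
The paper does not supply its own proof of this statement: Theorem~\ref{thm:general_inequality} is quoted verbatim from \cite{adams2003sobolev} and used as a black box in the proof of Lemma~\ref{lem:kinda hardy}. There is therefore nothing in the paper to compare your argument against.

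That said, your sketch is the standard route and is correct. The reduction via a Stein extension to $\R^d$, the bootstrap $W^{k,p}\hookrightarrow W^{k-j,p_j}$ with $1/p_j = 1/p - j/d$ for $jp<d$, the terminal Morrey step once $p_m>d$, and the separate treatment of the borderline $d/p\in\N$ via $W^{1,d}\hookrightarrow L^q$ for all finite $q$ are exactly the ingredients used in Adams--Fournier. Your computation of the H\"older exponent $\gamma = 1 - d/p_m = 1 - (d/p - \lfloor d/p\rfloor)$ is correct, and your remark that the Lipschitz hypothesis is precisely what licenses the Calder\'on--Stein extension is on point. One small caveat: for the embedding $W^{1,d}(\R^d)\hookrightarrow L^q(\R^d)$ you are implicitly using that $Eu$ has compact support (the embedding fails globally without a support restriction), but you flagged this at the outset, so the argument closes.
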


We can now establish the following lemma.

\begin{lem}\label{lem:kinda hardy}
Let $\Omega \subset \R^d$ be a bounded open set and let $\varphi \in \mathscr{C}^{1}(\bar{\Omega})$ be simply vanishing on $\partial \Omega$. Assume $k \in \N$ with $kp > {d}$ and that the boundary $\partial \Omega$ is $\mathscr{C}^k$. Then for every $u \in L^{p}(\Omega)$ where $\varphi u \in W^{k+1,p}(\Omega)$ we have
\begin{align*}
\varphi D^{\alpha} u \in W^{1,p}_{0}(\Omega) \cap W^{k,p}(\Omega), \qquad 
    \text{for any } \alpha \text{ with } |\alpha| = 1.
\end{align*}
\end{lem}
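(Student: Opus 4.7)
Let $v := \varphi u \in W^{k+1,p}(\Omega)$. The plan has three parts: first, show $v \in W^{1,p}_0(\Omega)\cap W^{k+1,p}(\Omega)$; second, derive the distributional product-rule identity $\varphi D^\alpha u = D^\alpha v - (D^\alpha\varphi)\,u$ on $\Omega$; and third, upgrade this identity to a $W^{k,p}\cap W^{1,p}_0$ statement via a higher-order Hardy estimate and a trace argument.

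For the first part, Theorem~\ref{thm:general_inequality} applied to $v\in W^{k+1,p}(\Omega)$ (with $(k+1)p>d$ and $\partial\Omega\in\mathscr{C}^k$) gives $v\in\mathscr{C}^{k-\lfloor d/p\rfloor,\gamma}(\bar\Omega)\subset\mathscr{C}^1(\bar\Omega)$. If $v(y_0)\neq 0$ for some $y_0\in\partial\Omega$, continuity of $v$ together with the mean value bound $|\varphi(x)|\leq \|\nabla\varphi\|_\infty\,\text{dist}(x,\partial\Omega)$ (from $\varphi\in\mathscr{C}^1(\bar\Omega)$ with $\varphi|_{\partial\Omega}=0$) would force $|u(x)|\geq c/\text{dist}(x,\partial\Omega)$ on a boundary neighborhood, contradicting $u\in L^p(\Omega)$ because $\int\text{dist}(\cdot,\partial\Omega)^{-p}$ diverges on Lipschitz domains. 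Hence $v$ vanishes on $\partial\Omega$, so $v\in W^{1,p}_0(\Omega)\cap W^{k+1,p}(\Omega)$; the mean value theorem yields $|v|\leq C\,\text{dist}(\cdot,\partial\Omega)$, and the simply vanishing bound $\varphi\geq c\,\text{dist}(\cdot,\partial\Omega)$ near $\partial\Omega$ gives $u=v/\varphi\in L^\infty(\Omega)$.

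For the second part, I would test against $\psi\in\mathscr{C}^\infty_0(\Omega)$ and use the Leibniz expansion $D^\alpha(\varphi\psi)=(D^\alpha\varphi)\psi+\varphi D^\alpha\psi$ (valid for $|\alpha|=1$ and $\varphi\in\mathscr{C}^1$) together with $\int v\,D^\alpha\psi = -\int D^\alpha v\cdot\psi$ (valid since $v\in W^{1,p}(\Omega)$) to obtain the identity $\varphi D^\alpha u = D^\alpha v - (D^\alpha\varphi)\,u$ as an equality of bounded functions on $\Omega$.

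The third part is the main obstacle. The term $D^\alpha v$ sits in $W^{k,p}(\Omega)$ automatically, so everything reduces to showing $(D^\alpha\varphi)\,u = (D^\alpha\varphi)\,v/\varphi \in W^{k,p}(\Omega)$. I would factor
\[
(D^\alpha\varphi)\,u = \Bigl(\text{dist}(\cdot,\partial\Omega)\,\frac{D^\alpha\varphi}{\varphi}\Bigr)\cdot\frac{v}{\text{dist}(\cdot,\partial\Omega)},
\]
where the first factor extends continuously to $\bar\Omega$ (with enough Sobolev regularity inherited from $\partial\Omega\in\mathscr{C}^k$) because $\varphi$ is simply vanishing, while the second factor lies in $W^{k,p}(\Omega)$ by a higher-order Hardy inequality applied to $v\in W^{1,p}_0\cap W^{k+1,p}$ (generalizing Theorem~\ref{thm:hardys_inequality}); the product then sits in $W^{k,p}(\Omega)$. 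For the $W^{1,p}_0$ conclusion, the embedding $W^{k,p}\hookrightarrow\mathscr{C}^0(\bar\Omega)$ (from $kp>d$) produces a continuous representative of $\varphi D^\alpha u$; the fact that both $\nabla v$ and $\nabla\varphi$ are normal to $\partial\Omega$ at every boundary point forces $\nabla v(y)=\lambda(y)\nabla\varphi(y)$, and combined with the identity this makes $\varphi D^\alpha u$ vanish pointwise on $\partial\Omega$, yielding zero trace. The hardest step is producing the higher-order Hardy bound $\|v/\text{dist}(\cdot,\partial\Omega)\|_{W^{k,p}}\leq C\|v\|_{W^{k+1,p}}$ for $v\in W^{1,p}_0\cap W^{k+1,p}$, which is precisely where the hypothesis $\partial\Omega\in\mathscr{C}^k$ is needed.
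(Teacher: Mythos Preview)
Your Parts 1 and 2 match the paper's Claim 1: the paper also uses Sobolev embedding to put $\varphi u\in\mathscr{C}^1(\bar\Omega)$, invokes Lemma~\ref{lem:statement of range} for $\varphi u\in W^{1,p}_0(\Omega)$, and computes the boundary limit $\lim_{x\to y}u(x)=|\nabla(\varphi u)(y)|/|\nabla\varphi(y)|$, which is exactly your proportionality $\nabla v\parallel\nabla\varphi$ on $\partial\Omega$, yielding $D^\alpha(\varphi u)=uD^\alpha\varphi$ there.

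Part 3 contains a genuine gap. The hypothesis is only $\varphi\in\mathscr{C}^1(\bar\Omega)$, so $D^\alpha\varphi$ is merely continuous, and hence your first factor $\text{dist}(\cdot,\partial\Omega)\,D^\alpha\varphi/\varphi$ has at best $\mathscr{C}^0$ regularity on $\bar\Omega$; it cannot serve as a multiplier on $W^{k,p}(\Omega)$ for $k\geq 1$, so even if $v/\text{dist}\in W^{k,p}(\Omega)$ the product need not lie in $W^{k,p}(\Omega)$. Separately, the higher-order Hardy estimate $\|v/\text{dist}\|_{W^{k,p}}\leq C\|v\|_{W^{k+1,p}}$ for $v\in W^{1,p}_0\cap W^{k+1,p}$ is substantially stronger than Theorem~\ref{thm:hardys_inequality} (which is purely an $L^p$ bound) and is not established anywhere in the paper; you flag it as the hardest step, but it is in fact an independent theorem that you have not supplied.

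The paper's Claim 2 takes a different route that bypasses both obstacles. Instead of proving $(D^\alpha\varphi)u\in W^{k,p}(\Omega)$ by interior analysis, it works through the trace: the map $T:W^{k,p}(\Omega)\to W^{k-1/p,p}(\partial\Omega)$ descends to a bijection $\hat T$ on the quotient $\hat W^k:=W^{k,p}(\Omega)/(W^{1,p}_0\cap W^{k,p})$, and the boundary identity from Claim 1 forces the cosets $[D^\alpha(\varphi u)]$ and $[uD^\alpha\varphi]$ to coincide, whence $\varphi D^\alpha u=D^\alpha(\varphi u)-uD^\alpha\varphi$ lies in $W^{1,p}_0\cap W^{k,p}$. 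No higher-order Hardy bound and no multiplier regularity beyond $\varphi\in\mathscr{C}^1$ enters.
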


\begin{proof}
The proof is divided into two claims.

\begin{claim}{1}
If $|\alpha| = 1$ then $D^{\alpha} (\varphi u)  = u D^{\alpha}\varphi$ on $\partial \Omega$.
\end{claim}

Since $kp > d$ and $\varphi u \in W^{k+1,p}(\Omega)$, we know there exists a $\gamma \in (0,1)$ dependent on $d$ and $p$ such that
\begin{align*}
\varphi u \in \mathscr{C}^{k-\big\lfloor \tfrac{d}{p} \big\rfloor,\gamma}(\bar{\Omega}), 
\end{align*}
by Theorem~\ref{thm:general_inequality}. This shows $\varphi u \in \mathscr{C}^{1}(\bar{\Omega})$. Also, since $u = \varphi^{-1} \varphi u$ whenever $\varphi \neq 0$, the fact that $\varphi \in \mathscr{C}^{1}(\bar{\Omega})$ and is nonzero in $\Omega$ implies $u \in \mathscr{C}^1(\Omega)$.

Now, since $\varphi$ is simply vanishing on $\partial \Omega$, we know that for any $y \in \partial \Omega$,
\begin{align*}
\lim_{x \to y} \frac{|x-y|}{|\varphi(x)|} 
=   \lim_{x \to y} \frac{|x-y|}{|\varphi(x)-\varphi(y)|} = |\nabla \varphi(y)|^{-1},
\end{align*}
where the limit is taken in $\Omega$. Note that $\nabla \varphi \neq 0$ on $\partial \Omega$, so that this is well defined. Next, given $\varphi u \in W^{1,p}(\Omega)$ and $u \in L^p(\Omega)$, we know $\varphi u \in W^{1,p}_{0}(\Omega)$ by Lemma~\ref{lem:statement of range}. We already know $\varphi u$ is continuous on $\bar{\Omega}$, which implies $\varphi u = 0$ on $\partial \Omega$. Thus, for any $y \in \partial \Omega$ we can take any sequence in $\Omega$ that converges to $y$ and obtain
\begin{align}
\lim_{x \to y} |u(x)|
=   \lim_{x \to y} 
        \frac{|\varphi(x) u(x) - \varphi(y)u(y)|}{|x - y|} 
        \frac{|x-y|}{|\varphi(x)|} 
=   |\nabla (\varphi u)(y)| |\nabla \varphi(y)|^{-1}.
\label{eq:limit_with_product}
\end{align}
By Leibniz's rule, $D^{\alpha} (\varphi u) = uD^{\alpha} \varphi + \varphi D^{\alpha} u$ when $|\alpha| = 1$, so the above limit implies $D^{\alpha} (\varphi u)  = u D^{\alpha}\varphi$ on $\partial \Omega$. 

\begin{claim}{2}
The function $\varphi D^{\alpha}u$ is in $W^{1,p}_{0}(\Omega) \cap W^{k,p}(\Omega)$ whenever $|\alpha| = 1$. 
\end{claim}

Set ${W}_0 \coloneqq W^{1,p}_{0}(\Omega) \cap W^{k,p}(\Omega)$ and $\hat{W}^k \coloneqq W^{k,p}(\Omega)/W_0$. By assumption, $\varphi u \in W^{k+1,p}(\Omega)$ so $D^{\alpha}(\varphi u) \in W^{k,p}(\Omega)$ whenever $|\alpha| = 1$. This implies the coset $[D^{\alpha}(\varphi u)]$ is in $\hat{W}^k$ and that its trace $\hat{T}[D^{\alpha}(\varphi u)]$ is in $W^{k-1/p,p}(\partial \Omega)$.
By claim 1,
\begin{align*}
u D^{\alpha} \varphi = D^{\alpha}(\varphi u) \qquad \text{on } \partial \Omega,
\end{align*}
which shows that $u D^{\alpha} \varphi |_{\partial \Omega} \in W^{k-1/p,p}(\partial \Omega)$ and that  $\hat{T}^{-1} (u D^{\alpha} \varphi |_{\partial \Omega}) \in \hat{W}^k$.  The one-to-one nature of $\hat{T}^{-1}$ implies $[D^{\alpha}(\varphi u)] = [uD^{\alpha}\varphi]$. Since these cosets are equal, there exists a function $v \in W_0$ such that
\begin{align*}
u D^{\alpha} \varphi = D^{\alpha} (\varphi u) - v.
\end{align*}
But again, $D^{\alpha} (\varphi u) = uD^{\alpha} \varphi + \varphi D^{\alpha} u$, so it must be the case that $v = \varphi D^{\alpha} u$. We then conclude that $\varphi D^{\alpha} u \in W_0 =  W^{1,p}_{0}(\Omega) \cap W^{k,p}(\Omega)$, completing the proof.
\end{proof}

\begin{thm}\label{thm:kinda hardy_pre}
Let $\Omega \subset \R^d$ be a bounded open set and let $\varphi \in \mathscr{C}^{1}(\bar{\Omega})$ be simply vanishing on $\partial \Omega $. Assume $k \in \N$ with $kp > {d}$ and that $\partial \Omega$ is $\mathscr{C}^k$. Then $u \in L^{p}(\Omega)$ and $\varphi u \in W^{k+1,p}(\Omega)$ implies $u \in W^{\kappa,p}(\Omega)$ where $\kappa \coloneqq k- \big\lfloor \tfrac{d}{p} \big\rfloor$.
\end{thm}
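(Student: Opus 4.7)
The plan is to proceed by induction on the order of the derivative, using Lemma~\ref{lem:kinda hardy} as the inductive tool and Hardy's inequality to recover $L^p$ control after each step. Specifically, I would prove by induction on $n \in \{0, 1, \ldots, \kappa\}$ the following statement:
\begin{align*}
P(n):\quad \text{for every multi-index } \beta \text{ with } |\beta|=n,\; D^{\beta} u \in L^p(\Omega) \text{ and } \varphi D^{\beta} u \in W^{k-n+1, p}(\Omega).
\end{align*}
The theorem is then the case $n=\kappa$. The base case $P(0)$ is precisely the hypothesis $u \in L^p(\Omega)$, $\varphi u \in W^{k+1,p}(\Omega)$.

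For the inductive step from $n$ to $n+1$, assuming $n+1 \leq \kappa$, fix any $|\beta|=n$ and apply Lemma~\ref{lem:kinda hardy} with the parameter $k$ replaced by $k' \coloneqq k-n$ and the function $u$ replaced by $D^{\beta} u$. The hypotheses are verified as follows: $D^{\beta} u \in L^p(\Omega)$ and $\varphi D^{\beta} u \in W^{k'+1, p}(\Omega)$ hold by $P(n)$; $\partial \Omega$ is $\mathscr{C}^k$, hence $\mathscr{C}^{k'}$; and $k' p > d$ because $n+1 \leq \kappa = k - \lfloor d/p \rfloor$ gives $k' \geq \lfloor d/p \rfloor + 1$. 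The lemma yields $\varphi D^{\alpha+\beta} u \in W^{1,p}_0(\Omega) \cap W^{k-n, p}(\Omega)$ for every $|\alpha|=1$, so for every $|\gamma|=n+1$,
\begin{align*}
\varphi D^{\gamma} u \in W^{1,p}_0(\Omega) \cap W^{k-(n+1)+1, p}(\Omega).
\end{align*}

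To close $P(n+1)$, I still need $D^{\gamma} u \in L^p(\Omega)$. Here I invoke Hardy's inequality. Because $\varphi$ is simply vanishing, the discussion following Definition~\ref{def:simply_vanishing_equivalent} gives $\nabla \varphi \neq 0$ on $\partial \Omega$; since $\nabla \varphi$ is continuous on the compact set $\bar{\Omega}$ and $\varphi>0$ on $\Omega$, a standard compactness argument produces a constant $c_1 > 0$ with $\varphi(x) \geq c_1 \delta(x)$ on $\bar{\Omega}$, where $\delta(x) = \dist(x, \partial \Omega)$. Since $\varphi D^{\gamma} u \in W^{1,p}_0(\Omega)$, Theorem~\ref{thm:hardys_inequality} (applied to an approximating sequence from $\mathscr{C}^{\infty}_0(\Omega)$ and passing to the limit) gives
\begin{align*}
\|\delta^{-1} \varphi D^{\gamma} u\|_{L^p(\Omega)} \leq c \|\nabla(\varphi D^{\gamma} u)\|_{L^p(\Omega)},
\end{align*}
and dividing pointwise by $\varphi / \delta \geq c_1$ yields $\|D^{\gamma} u\|_{L^p(\Omega)} \leq c\, c_1^{-1} \|\nabla(\varphi D^{\gamma} u)\|_{L^p(\Omega)} < \infty$. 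This completes $P(n+1)$, and after $\kappa$ iterations we conclude $u \in W^{\kappa,p}(\Omega)$.

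The only delicate point is bookkeeping the regularity indices so that the hypothesis $k'p>d$ of Lemma~\ref{lem:kinda hardy} survives every iteration; this is exactly what determines $\kappa = k - \lfloor d/p \rfloor$ as the largest number of iterations available, and hence why we lose $\lfloor d/p\rfloor + 1$ orders of regularity overall (one from Hardy, $\lfloor d/p\rfloor$ from the Sobolev embedding used inside Lemma~\ref{lem:kinda hardy}). All other ingredients are routine.
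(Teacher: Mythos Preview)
Your proposal is correct and follows essentially the same approach as the paper: both iterate Lemma~\ref{lem:kinda hardy} followed by Hardy's inequality, stepping down one order of regularity at a time until the condition $k'p>d$ is exhausted at $\kappa = k - \lfloor d/p\rfloor$. The only difference is organizational---the paper fixes a target multi-index $\alpha$ with $|\alpha|\le\kappa$ and walks along a chain of unit multi-indices summing to $\alpha$, while you package the same argument as a formal induction on the total order $n$; your version is arguably cleaner, and your explicit justification of the Hardy step via $\varphi \ge c_1\delta$ fills in a detail the paper leaves implicit.
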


\begin{proof}
Fix the multi-index $\alpha$ with $|\alpha| \leq \kappa$. Choose a finite sequence of multi-indices $\{\alpha_n\}_{n\leq |\alpha|}$ each with $|\alpha_n| = 1$ such that $\sum \alpha_n = \alpha$. By assumption, $kp > d$ so we may apply Lemma~\ref{lem:kinda hardy} to $u$ to obtain
\begin{align}\label{consequence of lemma kinda hardy}
\varphi D^{\alpha_1} u \in W^{1,p}_{0}(\Omega) \cap W^{k+1-|\alpha_1|,p}(\Omega).
\end{align}
Applying Hardy's inequality to $\varphi D^{\alpha_1} u$ yields
\begin{align}\label{using hardys after lemma}
D^{\alpha_1} u \in L^{p}(\Omega).
\end{align}
Moreover, given $|\alpha| \leq \kappa = k - \big\lfloor \tfrac{d}{p} \big\rfloor$, we know that
\begin{align}\label{index inequality kinda hardy}
k+1 - |\alpha_1| \geq k+1 - |\alpha| \geq 1 + \big\lfloor \tfrac{d}{p} \big\rfloor > \tfrac{d}{p}.
\end{align}
Since \eqref{consequence of lemma kinda hardy}, \eqref{using hardys after lemma}, and \eqref{index inequality kinda hardy} all hold, we may apply Lemma~\ref{lem:kinda hardy} to $D^{\alpha_1}u$ to obtain $\varphi D^{\alpha_1+\alpha_2} u \in W^{1,p}_{0}(\Omega) \cap W^{k-1,p}(\Omega)$. Another application of Hardy's inequality shows $D^{\alpha_1 + \alpha_2} u \in L^{p}(\Omega)$. We continue inductively applying Lemma~\ref{lem:kinda hardy} and Hardy's inequality at each step to finally show that
\begin{align*}
\varphi D^{\alpha} u \in W^{1,p}_{0}(\Omega) \cap W^{k+1-|\alpha|,p}(\Omega), \quad \text{and} \quad D^{\alpha}u \in L^p(\Omega).
\end{align*}
Since this applies to any multi-index $\alpha$ with $|\alpha| \leq \kappa$, we see that $u \in W^{\kappa,p}(\Omega)$, completing the proof.
\end{proof}

Iteratively applying the above theorem yields the following.

\begin{thm}\label{thm:kinda hardy}
Let $\Omega \subset \R^d$ be a bounded open set and let $\varphi \in \mathscr{C}^1(\bar{\Omega})$ be simply vanishing on $\partial \Omega$. Fix $m \in \N$ and $1 < p < \infty$. Assume $k \in \N$ is such that 
\begin{align}\label{the lower bound for k kinda hardy}
k > \tfrac{d}{p} + (m-1)\big\lfloor \tfrac{d}{p} \big\rfloor,
\end{align}
and that $\partial \Omega$ is $\mathscr{C}^k$. If $u \in L^{p}(\Omega)$ and $\varphi^m u \in W^{k+m,p}(\Omega)$ then
\begin{align}\label{statement of kappa kinda hardy}
u \in W^{\kappa,p}(\Omega), \quad \text{where } \kappa \coloneqq k- m\big\lfloor \tfrac{d}{p} \big\rfloor.
\end{align}
\end{thm}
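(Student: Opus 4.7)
The plan is to prove the theorem by induction on $m$, using Theorem \ref{thm:kinda hardy_pre} both as the base case and as the engine for each inductive step. For $m = 1$ the hypothesis becomes $k > d/p$ and $\kappa = k - \lfloor d/p \rfloor$, so the statement is exactly Theorem \ref{thm:kinda hardy_pre}.

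For the inductive step, suppose the result holds for exponent $m-1$ and consider exponent $m$. The idea is to peel off one factor of $\varphi$ from $\varphi^m u$ via Theorem \ref{thm:kinda hardy_pre}, and then invoke the inductive hypothesis on what remains. Observe that $\varphi^{m-1} u \in L^p(\Omega)$ (since $\varphi \in \mathscr{C}^1(\bar{\Omega})$ is bounded), and
\begin{align*}
\varphi \cdot (\varphi^{m-1} u) = \varphi^m u \in W^{k+m, p}(\Omega) = W^{(k+m-1)+1, p}(\Omega).
\end{align*}
Applying Theorem \ref{thm:kinda hardy_pre} with regularity parameter $\tilde{k} = k + m - 1$ (for which $\tilde{k} p > d$ holds easily from $k > d/p$) yields
\begin{align*}
\varphi^{m-1} u \in W^{\tilde{k} - \lfloor d/p \rfloor, p}(\Omega) = W^{(k - \lfloor d/p \rfloor) + (m-1), p}(\Omega).
\end{align*}
Now apply the inductive hypothesis for exponent $m-1$ with the Sobolev index replaced by $k' \coloneqq k - \lfloor d/p \rfloor$. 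The arithmetic condition $k' > d/p + (m-2)\lfloor d/p \rfloor$ is equivalent to the given hypothesis $k > d/p + (m-1)\lfloor d/p \rfloor$. Hence $u \in W^{k' - (m-1)\lfloor d/p \rfloor, p}(\Omega) = W^{k - m\lfloor d/p \rfloor, p}(\Omega) = W^{\kappa, p}(\Omega)$, as required.

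I expect the main obstacle to be checking that the hypotheses of Theorem \ref{thm:kinda hardy_pre} remain valid at each iteration level. The arithmetic constraint $\tilde{k}_j > d/p$ is tightest at the final step, where it reads $k - (m-1)\lfloor d/p \rfloor > d/p$, which matches the stated hypothesis exactly; intermediate steps give the weaker bound $\tilde{k}_j = k + m - j - (j-1)\lfloor d/p \rfloor$ decreasing in $j$, so these are automatic. The more delicate piece is the boundary-regularity bookkeeping: each application of Theorem \ref{thm:kinda hardy_pre} formally invokes Lemma \ref{lem:kinda hardy}, which relies on the trace operator being well-defined on $W^{\tilde{k}_j, p}$-functions, with $\tilde{k}_j$ ranging up to $k+m-1$ at the first iteration. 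The stated $\mathscr{C}^k$ hypothesis on $\partial \Omega$ should be understood as providing whatever boundary smoothness is required by all the intermediate trace spaces encountered in the recursion; once that is granted, the induction closes cleanly and delivers the Sobolev embedding $u \in W^{\kappa, p}(\Omega)$.
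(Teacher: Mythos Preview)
Your proposal is correct and follows essentially the same argument as the paper: both peel off one factor of $\varphi$ at a time via Theorem~\ref{thm:kinda hardy_pre}, the paper phrasing this as an explicit iteration with indices $\kappa_j = k+m-j-j\lfloor d/p\rfloor$ while you package it as an induction on $m$ with $k' = k-\lfloor d/p\rfloor$. The boundary-regularity bookkeeping issue you flag (the first application of Theorem~\ref{thm:kinda hardy_pre} formally needs $\partial\Omega\in\mathscr{C}^{k+m-1}$ rather than $\mathscr{C}^k$) is present and equally glossed over in the paper's own proof.
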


\begin{proof}
For convenience, we define the variables $\kappa_1, \ldots, \kappa_m$ as follows
\begin{align*}
\kappa_j \coloneqq k + m - j - j \big\lfloor \tfrac{d}{p} \big\rfloor, \qquad j = 1, \ldots, m.
\end{align*}
We know $\varphi^{m-1} u \in L^p(\Omega)$, $\varphi^m u \in W^{k+m,p}(\Omega)$, and $k + m - 1 > p^{-1} d$. By Theorem~\ref{thm:kinda hardy_pre}, this implies $\varphi^{m-1} u \in W^{\kappa_1,p}(\Omega)$. If $m > 1$, we see that $\kappa_1 - 1 > p^{-1}d$, and we have $\varphi^{m-2} u \in L^p(\Omega)$ and $\varphi^{m-1} u \in W^{\kappa_1,p}(\Omega)$. Thus we get $\varphi^{m-2} u \in W^{\kappa_2,p}(\Omega)$ by the same theorem. Continuing inductively, we apply Theorem~\ref{thm:kinda hardy_pre} at each step to get $\varphi^{m-j} u \in W^{\kappa_j,p}(\Omega)$ for $m > j$. When $j = m - 1$ we have $u \in L^p(\Omega)$ and $\varphi u \in W^{\kappa_{m-1},p}(\Omega)$. Since 
\begin{align*}
\kappa_{m-1} -1 = k - (m-1)\big\lfloor \tfrac{d}{p} \big\rfloor > \tfrac{d}{p},
\end{align*}
we may apply Theorem~\ref{thm:kinda hardy_pre} one more time to get $u \in W^{\kappa_m,p}(\Omega)$, as desired.
\end{proof}

\begin{rem}\label{rem:implicit bound}
There is an implicit estimate accompanying Theorem~\ref{thm:kinda hardy}. Assume $\varphi \in \mathscr{C}^1(\bar{\Omega})$ and consider the multiplication operator $\varphi^m : L^p(\Omega) \to W^{k+m,p}(\Omega)$ where $k$ satisfies \eqref{the lower bound for k kinda hardy}. Theorem~\ref{thm:kinda hardy} tells us that $u \in \domain(\varphi^m)$ implies $u \in W^{\kappa,p}(\Omega)$ where $\kappa$ is given by \eqref{statement of kappa kinda hardy}. Thus, $\domain(\varphi^m) \subset W^{\kappa,p}(\Omega)$. By the closedness of $\varphi^m$, $\domain(\varphi^m)$ is a Banach space with the operator norm. We can conclude that, for some constants $c_0, c_1 > 0$ and for all $u \in \domain(\varphi^m)$,
\begin{align}
\|u\|_{W^{\kappa,p}(\Omega)} & \leq c_0 \|u\|_{\domain(\varphi^m)}
 = c_0 \|u\|_{L^{p}(\Omega)} + c_0 \|\varphi^m u\|_{W^{k+m,p}(\Omega)} \label{first inequality from closed graph} \\
& \leq c_1 \|\varphi^m u\|_{W^{k+m,p}(\Omega)}, \label{second inequality from hardys inequality}
\end{align}
where \eqref{first inequality from closed graph} follows from Lemma~\ref{lem:equivalence_of_norms} applied to the Banach spaces $\domain(\varphi^m)$ and $W^{\kappa, p}(\Omega)$ and \eqref{second inequality from hardys inequality} from Hardy's inequality.
\end{rem}

\subsection{The range of a vanishing operator}\label{subsec:range vanishing}

Having a closed range is a very useful property for linear operators. As we will see in section~\ref{section:spectrum}, it is often necessary for establishing basic properties of the spectrum. Showing the multiplication operator $u \mapsto \varphi u$ has closed range requires keeping track of the multiplicity of the roots of the function $\varphi$. This is formally established in the following result.



\begin{thm}\label{thm:range closed vanishing}
Let $\Omega \subset \R^d$ be open and bounded with $\mathscr{C}^{0,1}$ boundary and assume the function $\varphi \in \mathscr{C}^k(\bar{\Omega})$ is simply vanishing on $\partial \Omega$. Then the range of the operator $u \mapsto \varphi^m u$ is closed in $W^{k,p}(\Omega)$ whenever $k \geq m$ and is not closed when $k < m$.
\end{thm}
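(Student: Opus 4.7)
The plan is to treat the two cases separately. For $k \geq m$, I will invoke Lemma~\ref{lem:statement of range} to identify $\range(\varphi^m) = W^{k,p}(\Omega) \cap W^{m,p}_0(\Omega)$. Since $m \leq k$, any $W^{k,p}(\Omega)$-convergent sequence also converges in $W^{m,p}(\Omega)$, and $W^{m,p}_0(\Omega)$ is closed in $W^{m,p}(\Omega)$ by definition. This immediately gives closedness of the range in $W^{k,p}(\Omega)$.

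For $k < m$, I will exhibit an element of the $W^{k,p}$-closure of $\range(\varphi^m)$ that is not itself in $\range(\varphi^m)$. Set $v = \varphi^k$ when $k \geq 1$ and $v = 1$ when $k = 0$; in either case $v \in \mathscr{C}^k(\bar{\Omega}) \subset W^{k,p}(\Omega)$ because $\varphi \in \mathscr{C}^k(\bar{\Omega})$ by hypothesis. If $v = \varphi^m u$ with $u \in L^p(\Omega)$, then $u = \varphi^{k-m}$ almost everywhere in $\Omega$, and simple vanishing yields $|u| \geq c\, \dist(\cdot, \partial \Omega)^{-(m-k)}$ near $\partial \Omega$. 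Since $m - k \geq 1$ and $p > 1$ give $(m-k)p > 1$, the integral of $|u|^p$ diverges near $\partial \Omega$, contradicting $u \in L^p(\Omega)$. Hence $v \notin \range(\varphi^m)$.

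To show $v \in \overline{\range(\varphi^m)}$, I will use a cutoff sequence $\chi_n \in \mathscr{C}^\infty_0(\Omega)$ with $\chi_n \equiv 0$ on $\{\dist(\cdot, \partial\Omega) < 1/n\}$, $\chi_n \equiv 1$ on $\{\dist(\cdot, \partial\Omega) > 2/n\}$, and $|D^\beta \chi_n| \leq c\, n^{|\beta|}$. The function $v_n := \chi_n v$ is compactly supported in $\Omega$, where $\varphi^{-m}$ is bounded, so $v_n = \varphi^m (\chi_n v / \varphi^m) \in \range(\varphi^m)$. Convergence $v_n \to v$ in $W^{k,p}(\Omega)$ reduces via Leibniz to estimating $D^\beta \chi_n \cdot D^{\alpha - \beta} v$ for each multi-index $\beta \leq \alpha$ with $|\alpha| \leq k$. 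The $\beta = 0$ term is $(\chi_n - 1) D^\alpha \varphi^k$, which tends to zero in $L^p(\Omega)$ by dominated convergence since $D^\alpha \varphi^k$ is bounded. For the terms with $|\beta| \geq 1$, I will use the key power-counting fact that every term in the Leibniz expansion of $D^{\alpha - \beta} \varphi^k$ leaves at least $k - (|\alpha| - |\beta|)$ undifferentiated factors of $\varphi$, giving $|D^{\alpha - \beta} \varphi^k| \leq c\, \varphi^{k - |\alpha| + |\beta|}$. Since $\varphi \leq c/n$ on the cutoff shell and the shell has Lebesgue measure $O(1/n)$, one obtains
\[
\|D^\beta \chi_n \cdot D^{\alpha - \beta} \varphi^k\|_{L^p(\Omega)} \leq c\, n^{|\beta|} \cdot n^{-(k - |\alpha| + |\beta|)} \cdot n^{-1/p} = c\, n^{|\alpha| - k - 1/p} \to 0.
\]

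The main delicate step is the power-counting estimate $|D^{\alpha - \beta} \varphi^k| \leq c\, \varphi^{k - |\alpha| + |\beta|}$: the blow-up $n^{|\beta|}$ from differentiating $\chi_n$ is precisely offset by the vanishing $\varphi^{k - |\alpha| + |\beta|} \leq c\, n^{-(k - |\alpha| + |\beta|)}$ on the cutoff shell, so the balance is exactly tight and only the factor $n^{-1/p}$ from the shell's small measure drives convergence to zero. The $k = 0$ case bypasses this entirely, as $v - v_n = (1 - \chi_n) \cdot 1$ tends to zero in $L^p(\Omega)$ directly by dominated convergence.
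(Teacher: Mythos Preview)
Your proof is correct, and in both directions it takes a genuinely different route from the paper.

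For $k \geq m$, the paper splits into two subcases: when $k = m$ it observes that $\range(\varphi^m) = W^{m,p}_0(\Omega)$ directly, and when $k > m$ it invokes Lemma~\ref{lem:compactness implies compactness almost} to get $\domain(\varphi^m) \subset\subset L^p(\Omega)$ and then Theorem~\ref{thm:compactness equals closed range} to conclude semi-Fredholmness. Your argument bypasses compactness entirely: once Lemma~\ref{lem:statement of range} identifies the range as $W^{k,p}(\Omega) \cap W^{m,p}_0(\Omega)$, closedness is immediate from the continuity of the inclusion $W^{k,p}(\Omega) \hookrightarrow W^{m,p}(\Omega)$. This is shorter and more elementary, though the paper's route has the side benefit of establishing semi-Fredholmness, which is used elsewhere.

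For $k < m$, the paper argues by contradiction through the semi-Fredholm machinery: assuming closed range, it factors $\varphi^m = \varphi^{m-k}\varphi^k$, uses Lemma~\ref{lem:reverse_products_and_semifredholm} to force $\varphi^{m-k}$ to be semi-Fredholm from $W^{k,p}_0(\Omega)$ to $W^{k,p}(\Omega)$, and then derives a contradiction with Hardy's inequality applied to $\varphi^{2k-m}$. Your approach is constructive: you exhibit $v = \varphi^k$ as a concrete limit point of the range that is not in the range, using a cutoff approximation with the tight power-counting $|D^{\alpha-\beta}\varphi^k| \leq c\,\varphi^{k-|\alpha|+|\beta|}$ balanced against $|D^\beta\chi_n| \leq c\,n^{|\beta|}$ on a shell of measure $O(1/n)$. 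This is more hands-on and avoids the Fredholm apparatus altogether; the paper's argument is slicker once that apparatus is in place, but yours makes the obstruction completely explicit.
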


\begin{proof}
As usual, we treat $\varphi$ as an operator from some dense subset of $L^p(\Omega)$ to $W^{k,p}(\Omega)$. We start with the following.

\begin{claim}{1}
If $k \geq m$ then the range of $\varphi^m$ is closed in $W^{k,p}(\Omega)$.
\end{claim} 

If $k = m$ then $\range(\varphi^m) = W^{m,p}_{0}(\Omega)$ as discussed in Lemma~\ref{lem:statement of range}, which clearly establishes the closedness of $\range(\varphi^m)$ in $W^{m,p}(\Omega)$. If $k > m$ then we may apply Lemma~\ref{lem:compactness implies compactness almost} to show $\domain(\varphi^m) \subset\subset L^p(\Omega)$. Invoking Theorem~\ref{thm:compactness equals closed range} proves $\varphi^m$ is semi-Fredholm, which implies $\range(\varphi^m)$ is closed in $W^{k,p}(\Omega)$.

\begin{claim}{2}
If $k < m$ then the range of $\varphi^m$ is not closed in $W^{k,p}(\Omega)$.
\end{claim} 

We prove this claim by contradiction. Suppose $\varphi^m$ has closed range in $W^{k,p}(\Omega)$. This implies $\varphi^m$ is semi-Fredholm from $L^p(\Omega)$ to $W^{k,p}(\Omega)$. Since $\varphi^k$ is Fredholm from $L^p(\Omega)$ to $W^{k,p}_{0}(\Omega)$, and since $\varphi^{m} = \varphi^{m-k} \varphi^k$ is semi-Fredholm from $L^p(\Omega)$ to $W^{k,p}(\Omega)$, Lemma~\ref{lem:reverse_products_and_semifredholm} implies $\varphi^{m-k}$ is semi-Fredholm from $W^{k,p}_{0}(\Omega)$ to $W^{k,p}(\Omega)$. Now, for any function $u \in \mathscr{C}^{\infty}_{0}(\Omega)$ we know that $v = \varphi^{k-m} u \in \mathscr{C}^k_{0}(\Omega)$, so $\varphi^{m-k}v \in \mathscr{C}^{\infty}_{0}(\Omega)$, implying $\varphi^{m-k}$ is onto the subspace $\mathscr{C}^{\infty}_{0}(\Omega)$. This implies 
\begin{align*}
\mathscr{C}^{\infty}_{0}(\Omega) \subset \range(\varphi^{m-k}).
\end{align*}
Since $\range(\varphi^{m-k})$ is closed, we know that $W^{k,p}_{0}(\Omega)$ is a subspace of $\range(\varphi^{m-k})$. But we also know that $\varphi^{k} \in W^{k,p}_{0}(\Omega)$, so there exists a function $v \in W^{k,p}_{0}(\Omega)$ such that $\varphi^{m-k} v = \varphi^{k}$, which implies $v = \varphi^{2k-m}$. But $\varphi^{2k-m}$ cannot be in $W^{k,p}_{0}(\Omega)$ as Hardy's inequality would then show
\begin{align*}
\|\varphi^{k-m}\|_{L^p(\Omega)} = \|\varphi^{-k} \varphi^{2k-m} \|_{L^p(\Omega)} \leq c \|  \varphi^{2k-m} \|_{W^{k,p}(\Omega)} < \infty.
\end{align*}
This is our desired contradiction.
\end{proof}

\begin{example}[The Legendre differential equation]
Set $\Omega = (-1,1)$. Let us analyze the operator $L$ given by
\begin{align*}
Lu(x) = \frac{d}{dx}\left[ (1-x^2) \frac{d}{dx} u(x) \right],
\end{align*}
acting on $L^p(\Omega)$, where as usual we assume $1 < p <\infty$. Let $A$ denote the derivative operator on $L^p(\Omega)$ and $\varphi(x) = (1 - x^2)$. The domain of $A$ is $W^{1,p}(\Omega)$, the nullspace of $A$ is $\text{span}\{ 1\}$, and the range of $A$ is equal to $L^p(\Omega)$. Since $\varphi$ is simply vanishing on $\partial \Omega$, we know the range of the multiplication operator $u \mapsto \varphi u$ is equal to $W^{1,p}_0(\Omega)$ by Lemma~\ref{lem:statement of range}. 

If $u \in W^{1,p}(\Omega)$, then $u \in \mathscr{C}^{0,1/p}(\bar{\Omega})$ by Sobolev Imbedding (Theorem~\ref{thm:general_inequality}). Thus, we can find a unique line $l(x)$ such that $u + l = 0$ on $\partial \Omega$, implying $u + l \in W^{1,p}_0(\Omega)$. Since $u \in W^{1,p}(\Omega)$ was arbitrary, this implies
\begin{align*}
W^{1,p}(\Omega) = W^{1,p}_0(\Omega) \oplus \text{span}\{1,x\}.
\end{align*}
If we take $\tilde{A}$ to be the restriction of the derivative operator to $W^{1,p}_0(\Omega)$, then $\dim\nul(\tilde{A}) = 0$ and $\co\dim\range(\tilde{A}) = 1$. Since $A$, $\tilde{A}$, and $\varphi : L^p(\Omega) \to W^{1,p}_0(\Omega)$ are all Fredholm we know that $L = \tilde{A}\varphi A$ is Fredholm, with
\begin{align*}
\ind(L) = \ind(\tilde{A}\varphi A) = \ind(\tilde{A}) + \ind(\varphi) + \ind( A) = -1 + 0 + 1 = 0,
\end{align*}
and $\nul(L) = \text{span}\{1\}$. 

In terms of the domain of $L$, we automatically get $\domain(L) \subset \domain(A) = W^{1,p}(\Omega)$. The interesting thing to note is that $L$ cannot be semi-Fredholm if $\domain(L) \subseteq W^{2,p}(\Omega)$. To see this, first note that $A$ maps $W^{2,p}(\Omega)$ onto $W^{1,p}(\Omega)$ and that the range of $\varphi : W^{1,p}(\Omega) \to W^{1,p}_0(\Omega)$ is not closed. Since this implies that $\varphi A : W^{2,p}(\Omega) \to W^{1,p}_0(\Omega)$ cannot be semi-Fredholm, we know that $L = \tilde{A}\varphi A$ cannot be semi-Fredholm.
\end{example}

\subsection{Matrix-valued functions} \label{subsection:matrix_functions}

One of our goals is to aid in the analysis of 
\begin{align}\label{L_operator2}
Lu = \divergence( \Phi \nabla u),
\end{align}
when the matrix-valued function $\Phi : \bar{\Omega} \to \C^{d \times d}$ is positive semi-definite for each $x\in \bar{\Omega}$. With that end in mind, this section focuses on the multiplication operator $\mathbf{u} \mapsto \Phi \mathbf{u}$ where $\Phi \in \mathscr{C}^1(\bar{\Omega};\C^{d\times d})$ and $\mathbf{u}(x) \in \C^d$ for almost every $x \in \Omega$. As we will see shortly, the properties that were established for the multiplication operator $u \mapsto \varphi u$ apply for the multiplication operator $\mathbf{u} \mapsto \Phi \mathbf{u}$ as well.

In order for the operator $L$ defined in \eqref{L_operator2} to be uniformly elliptic, the matrix $\Phi : \bar{\Omega} \to \C^{d\times d}$ must be uniformly positive definite. This section, like the ones before it, focus on the violation of this positivity assumption. Specifically, we assume $\Phi$ is vanishing of order $m$ (recall Definition~\ref{def:simply vanishing matrices2}). Another way to express this is as follows: for each fixed $V \subset\subset \Omega$ we have
\begin{align} \label{rayleigh thingy}
\inf_{x \in V^{}} \inf_{v \in \C^d} {\bar{v} \cdot \Phi(x) v} \geq c_V|v|,
\end{align}
where $c_V > 0$ is the smallest eigenvalue of $\Phi(x)$ for $x \in V$. Moreover, the speed at which $c_V$ goes to zero is proportional to $a^m$ where $a = \inf_{y\in \partial V} \text{dist}(y,\partial \Omega)$.

We take $L^p(\Omega^d)$ to be the space of all measurable functions $\mathbf{u} = (u_1, \ldots, u_d)$ such that ${u}_i \in L^p(\Omega)$ for $i = 1,\ldots,d$. The norm of $L^p(\Omega^d)$ is taken to be
 \begin{align*}
 \|\mathbf{u} \|_{L^p(\Omega^d)} \coloneqq \sum_{i=1}^d \| u_i\|_{L^p(\Omega)}.
 \end{align*}
In other words,
\begin{align*}
L^p(\Omega^d) = \underbrace{L^p(\Omega) \times  \cdots \times L^p(\Omega).}_{d \text{ copies}}
\end{align*}
The space $W^{k,p}(\Omega^d)$ is defined as the subset of $\mathbf{u} = (u_1,\ldots,u_d) \in L^p(\Omega^d)$ where $u_i \in W^{k,p}(\Omega)$ for each $i = 1,\ldots, d$. We assume $\Phi : L^p(\Omega^d) \to W^{k,p}(\Omega^d)$ for some $k \in \Z_+$.

\begin{thm}\label{thm:range closed vanishing_matrices}
Let $\Omega \subset \R^d$ be open and bounded with $\mathscr{C}^{0,1}$ boundary and assume $\Phi \in \mathscr{C}^k(\bar{\Omega};\C^{d\times d})$ is vanishing of order $m$. Then the range of the operator $\mathbf{u} \mapsto \Phi \mathbf{u}$ is closed in $W^{k,p}(\Omega^d)$ whenever $k \geq m$ and is not closed when $k < m$.
\end{thm}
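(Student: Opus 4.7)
The plan is to reduce the matrix statement to the scalar Theorem~\ref{thm:range closed vanishing} via the Schur decomposition $\Phi = \mathbf{U}\mathbf{D}\mathbf{U}^*$ of \eqref{eq:schur's decomposition}, where $\mathbf{D} = \text{diag}(\varphi_1,\ldots,\varphi_d)$. The pointwise-unitary matrix $\mathbf{U}$ is bounded and boundedly invertible, so multiplication by $\mathbf{U}$ (respectively $\mathbf{U}^*$) is an isomorphism of $L^p(\Omega^d)$; provided $\mathbf{U} \in \mathscr{C}^k(\bar{\Omega};\C^{d\times d})$, it is also an isomorphism of $W^{k,p}(\Omega^d)$. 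Since $\mathbf{u}\mapsto \mathbf{U}^*\mathbf{u}$ is onto $L^p(\Omega^d)$, one gets $\range(\Phi) = \mathbf{U}\cdot\range(\mathbf{D})$ as subsets of $W^{k,p}(\Omega^d)$, and closedness of $\range(\Phi)$ is equivalent to closedness of $\range(\mathbf{D})$.

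For the case $k \ge m$: because $\mathbf{D}$ acts componentwise, $\range(\mathbf{D}) = \range(\varphi_1)\times\cdots\times\range(\varphi_d)$ in $W^{k,p}(\Omega)^d$, and the product is closed iff every factor is. For each $i$, one of two things happens. If $\varphi_i > 0$ on $\bar{\Omega}$, the function $\varphi_i^{-1}$ is bounded, so multiplication by $\varphi_i$ is an automorphism of both $L^p(\Omega)$ and $W^{k,p}(\Omega)$, giving $\range(\varphi_i) = W^{k,p}(\Omega)$. Otherwise $\varphi_i$ is vanishing of some order $m_i \leq m \leq k$, and I pair $\varphi_i$ with $\psi_i^{m_i}$ for some simply vanishing $\psi_i$; by the semi-Fredholm remark at the end of Section~\ref{section:preliminaries} the two share the closed-range property, and Theorem~\ref{thm:range closed vanishing} supplies closedness because $k \geq m_i$.

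For $k < m$: pick any index $i_0$ for which $\varphi_{i_0}$ is vanishing of order exactly $m$; this exists by Definition~\ref{def:simply vanishing matrices2}. Theorem~\ref{thm:range closed vanishing}, together with the same simply-vanishing pairing, furnishes a sequence $\{u_n\}\subset L^p(\Omega)$ with $\varphi_{i_0}u_n \to w$ in $W^{k,p}(\Omega)$ and $w\notin\range(\varphi_{i_0})$. Setting $\mathbf{u}_n = u_n\,\mathbf{e}_{i_0}$, the image $\mathbf{D}\mathbf{u}_n$ converges in $W^{k,p}(\Omega^d)$ to $w\,\mathbf{e}_{i_0}$, which cannot lie in $\range(\mathbf{D})$ (otherwise its $i_0$-th coordinate would belong to $\range(\varphi_{i_0})$). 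Transporting through the isomorphism $\mathbf{U}$ propagates the failure of closedness to $\range(\Phi)$.

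The principal obstacle is checking enough regularity of the Schur factor $\mathbf{U}$ to make it an isomorphism of $W^{k,p}(\Omega^d)$, since the preliminaries only assert $\mathscr{C}^1$ regularity of $\mathbf{U}$. Because the hypothesis sorts each eigenfunction into either ``strictly positive on $\bar{\Omega}$'' or ``vanishing on $\partial \Omega$'', the two families of eigenvalues are separated on a neighborhood of $\partial \Omega$, which allows a $\mathscr{C}^k$ choice of $\mathbf{U}$ there; to handle possible interior crossings one can localize by a smooth partition of unity subordinate to a cover of $\bar{\Omega}$ on which $\mathbf{U}$ may be chosen of class $\mathscr{C}^k$, and then assemble the scalar closedness arguments patch-by-patch. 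The remaining computations---products of closed subspaces, the scalar semi-Fredholm equivalence, and Hardy's inequality---are already encapsulated in Theorem~\ref{thm:range closed vanishing}.
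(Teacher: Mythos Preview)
Your proposal is correct and follows essentially the same route as the paper: reduce via the Schur decomposition $\Phi = \mathbf{U}\mathbf{D}\mathbf{U}^*$ to the diagonal operator $\mathbf{D}$, then treat each diagonal entry with Theorem~\ref{thm:range closed vanishing}. The paper's argument is in fact terser than yours---it simply asserts that $\mathbf{U},\mathbf{D}\in\mathscr{C}^k(\bar{\Omega};\C^{d\times d})$ and that $\mathbf{U}$ is one-to-one and onto, then invokes Theorem~\ref{thm:range closed vanishing} for each $\varphi_i$ without separately discussing the strictly positive case, the passage from ``vanishing of order $m_i$'' to a power of a simply vanishing function, or the regularity of $\mathbf{U}$; your extra care on these points (via the remark at the end of Section~\ref{section:preliminaries} and your observations about eigenvalue separation near $\partial\Omega$) is not a departure from the paper's strategy but a filling-in of details the paper leaves implicit.
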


\begin{proof}
We know there exists $\mathbf{U} \in \mathscr{C}^k(\bar{\Omega}; \C^{d\times d})$ and $\mathbf{D} \in \mathscr{C}^k(\bar{\Omega}; \R^{d\times d})$ such that $\Phi = \mathbf{U}\mathbf{D}\mathbf{U}^*$, where $\mathbf{D} = \text{diag}(\varphi_1, \ldots, \varphi_d)$ and $\varphi_i \in \mathscr{C}^{k}(\bar{\Omega})$ for each $i = 1,\ldots,d$. Since $\mathbf{U}$ is one-to-one and onto $L^p(\Omega^d)$, it suffices to prove the claim for the operator $\mathbf{D}$. Now, by our definition of $W^{k,p}(\Omega^d)$, it must be the case that $\range(\mathbf{D})$ is closed in $W^{k,p}(\Omega^d)$ if and only if the multiplication operators $u \mapsto \varphi_i u$ have closed range in $W^{k,p}(\Omega)$ for each $i = 1,\ldots, d$. With this in mind, we simply apply Theorem~\ref{thm:range closed vanishing} for each diagonal function $\varphi_i$, yielding the desired conclusion.
\end{proof}

\begin{thm}\label{thm:kinda hardy_matrices}
Let $\Omega \subset \R^d$ be a bounded open set and let $\Phi \in \mathscr{C}^k(\bar{\Omega};\C^{d\times d})$ be vanishing of order $m$. Assume $k \in \N$ is such that 
\begin{align*}
k > \tfrac{d}{p} + (m-1)\big\lfloor \tfrac{d}{p} \big\rfloor,
\end{align*}
and that the boundary $\partial \Omega$ is $\mathscr{C}^k$. If $\mathbf{u} \in L^{p}(\Omega^d)$ and $\Phi \mathbf{u} \in W^{k+m,p}(\Omega^d)$ then
\begin{align}
\label{defines kappa}
\mathbf{u} \in W^{\kappa,p}(\Omega^d), \quad \text{where } \kappa \coloneqq k- m\big\lfloor \tfrac{d}{p} \big\rfloor,
\end{align}
and there exists a $c > 0$, independent of $\mathbf{u}$, such that
\begin{align}\label{est:domain estimate matrices}
\|\mathbf{u}\|_{W^{\kappa,p}(\Omega^d)} \leq c \| \Phi \mathbf{u} \|_{W^{k+m,p}(\Omega^d)}.
\end{align}
\end{thm}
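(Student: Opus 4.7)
The plan is to reduce the matrix statement to $d$ applications of the scalar Theorem~\ref{thm:kinda hardy} via Schur's decomposition, mirroring the proof of Theorem~\ref{thm:range closed vanishing_matrices}. Since $\Phi$ is Hermitian with entries in $\mathscr{C}^k(\bar\Omega)$, write $\Phi = \mathbf{U}\mathbf{D}\mathbf{U}^*$ with $\mathbf{U},\mathbf{D}\in \mathscr{C}^k(\bar\Omega;\C^{d\times d})$, $\mathbf{U}$ pointwise unitary, and $\mathbf{D}=\text{diag}(\varphi_1,\ldots,\varphi_d)$ as in Definition~\ref{def:simply vanishing matrices2}. Put $\mathbf{v}=\mathbf{U}^*\mathbf{u}\in L^p(\Omega^d)$; then the identity $\mathbf{D}\mathbf{v}=\mathbf{U}^*(\Phi\mathbf{u})$ places $\mathbf{D}\mathbf{v}$ in $W^{k+m,p}(\Omega^d)$ (with the caveat addressed at the end), and diagonality of $\mathbf{D}$ decouples the problem into the scalar assertions $\varphi_i v_i \in W^{k+m,p}(\Omega)$ for $i=1,\ldots,d$.

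For each $i$, Definition~\ref{def:simply vanishing matrices2} ensures $\varphi_i$ is either strictly positive on $\bar\Omega$ or vanishing of order $m_i\leq m$ on $\partial\Omega$. In the strictly positive case, $\varphi_i^{-1}\in\mathscr{C}^k(\bar\Omega)$ yields $v_i=\varphi_i^{-1}(\varphi_i v_i)\in W^{k,p}(\Omega)\subseteq W^{\kappa,p}(\Omega)$ immediately. In the vanishing case, a function vanishing of order $m_i$ is, by the discussion following Definition~\ref{def:simply_vanishing_equivalent} and a local collar argument, interchangeable with $\psi_i^{m_i}$ for some simply vanishing $\psi_i\in\mathscr{C}^1(\bar\Omega)$, so that Theorem~\ref{thm:kinda hardy} applied with exponent $m_i$ in place of $m$ gives $v_i \in W^{k-m_i\lfloor d/p\rfloor,p}(\Omega)\subseteq W^{\kappa,p}(\Omega)$; the required hypothesis $k>\tfrac{d}{p}+(m_i-1)\lfloor d/p\rfloor$ follows from $m_i\leq m$. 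Assembling the components gives $\mathbf{v}\in W^{\kappa,p}(\Omega^d)$, and since $\kappa\leq k$, pointwise multiplication by $\mathbf{U}\in\mathscr{C}^k(\bar\Omega;\C^{d\times d})$ preserves $W^{\kappa,p}(\Omega^d)$, so $\mathbf{u}=\mathbf{U}\mathbf{v}\in W^{\kappa,p}(\Omega^d)$.

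The bound \eqref{est:domain estimate matrices} then follows exactly as in Remark~\ref{rem:implicit bound}: the multiplication operator $\Phi:\domain(\Phi)\subset L^p(\Omega^d)\to W^{k+m,p}(\Omega^d)$ is closed on its natural domain by the argument given in Claim 1 of Lemma~\ref{lem:statement of range}, so $\domain(\Phi)$ is a Banach space under the graph norm. The containment $\domain(\Phi)\subset W^{\kappa,p}(\Omega^d)$ established above combines with Lemma~\ref{lem:equivalence_of_norms} to give a bounded inclusion, and a componentwise application of Hardy's inequality absorbs the $L^p$ term on the right-hand side. The main technical obstacle is the compatibility of the Schur reduction with the regularity level $k+m$: pointwise multiplication by $\mathbf{U}^*\in\mathscr{C}^k$ manifestly preserves only $W^{k,p}$, so the identification $\mathbf{D}\mathbf{v}\in W^{k+m,p}(\Omega^d)$ either requires strengthening the regularity hypothesis on $\Phi$ to $\mathscr{C}^{k+m}$, or a more delicate argument showing that the distributional derivatives of $\mathbf{D}\mathbf{v}$ up to order $k+m$ inherit integrability directly from those of $\Phi\mathbf{u}$ via the explicit Leibniz expansion of $\mathbf{U}^*(\Phi\mathbf{u})$ combined with the scalar theorem applied iteratively.
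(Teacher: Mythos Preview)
Your approach is essentially identical to the paper's: both diagonalize via the Schur decomposition $\Phi=\mathbf{U}\mathbf{D}\mathbf{U}^*$, reduce to the diagonal operator $\mathbf{D}$, treat each eigenfunction $\varphi_i$ according to whether it is strictly positive or vanishing of order $m_i\leq m$, invoke Theorem~\ref{thm:kinda hardy} componentwise, and then obtain the estimate by the closed-graph argument of Remark~\ref{rem:implicit bound}. The paper's proof is in fact terser---it simply asserts ``it suffices to prove the claim for the operator $\mathbf{D}$'' and then works directly with $\mathbf{D}\mathbf{u}\in W^{k+m,p}(\Omega^d)$, omitting the passage through $\mathbf{v}=\mathbf{U}^*\mathbf{u}$ that you write out explicitly.

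The regularity mismatch you flag at the end is genuine, and it is worth noting that the paper's own proof does not address it either: the reduction from $\Phi$ to $\mathbf{D}$ requires that multiplication by $\mathbf{U}^*\in\mathscr{C}^k(\bar\Omega;\C^{d\times d})$ preserve membership in $W^{k+m,p}(\Omega^d)$, which is not automatic under the stated hypothesis $\Phi\in\mathscr{C}^k$. Your two suggested fixes (strengthening to $\Phi\in\mathscr{C}^{k+m}$, or arguing more directly via a Leibniz expansion) are both reasonable; the first is the cleanest and appears to be what the paper tacitly assumes. So your proposal is correct modulo exactly this point, and you have been more careful than the paper in isolating where the difficulty lies.
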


\begin{proof}
We know there exists $\mathbf{U} \in \mathscr{C}^k(\bar{\Omega}; \C^{d\times d})$ and $\mathbf{D} \in \mathscr{C}^k(\bar{\Omega}; \R^{d\times d})$ such that $\Phi = \mathbf{U}\mathbf{D}\mathbf{U}^*$, where $\mathbf{D} = \text{diag}(\varphi_1, \ldots, \varphi_d)$ and $\varphi_i \in \mathscr{C}^{k}(\bar{\Omega})$ for each $i = 1,\ldots,d$. As in the above theorem, it suffices to prove the claim for the operator $\mathbf{D}$. 

Given  $\mathbf{u} = (u_1,\ldots, u_d) \in L^{p}(\Omega^d)$ and $\mathbf{D} \mathbf{u} \in W^{k+m,p}(\Omega^d)$ then we have $u_i \in L^p(\Omega)$ and $\varphi_i u_i \in W^{k+m,p}(\Omega)$ for each $i = 1,\ldots,d$. If $\varphi_i > 0$ on $\bar{\Omega}$ then $u_i \in W^{k+m,p}(\Omega)$, and if $\varphi_i$ is vanishing of order $j \leq m$ we apply Theorem~\ref{thm:kinda hardy} to get 
\begin{align*}
u_i \in W^{\kappa_j,p}(\Omega), \quad \text{ where } \kappa_j \coloneqq k - j\big\lfloor \tfrac{d}{p} \big\rfloor.
\end{align*}
In either case, $u_i \in W^{\kappa,p}(\Omega)$ for each $i = 1,\ldots,d$. The proof of inequality \eqref{est:domain estimate matrices} mirrors that of Remark~\ref{rem:implicit bound} and is omitted.
\end{proof}

\section{Differential operators composed with vanishing operators}\label{section:differential_with}

In this section we examine differential operators that are composed with vanishing operators. By `differential operator' we mean any operator that is closed on the subspace $W^{k,p}(\Omega) \subset L^p(\Omega)$, $k \geq 1$, and maps to either $L^p(\Omega)$ or $L^p(\Omega^d)$.  We pay particular attention to linear differential operators that are Fredholm or semi-Fredholm. Many of these results use compactness of nested Sobolev spaces.

\subsection{Compactness}\label{subsection:compactness}

One of the salient features of the Sobolev space $W^{k,p}(\Omega)$ is its compactness relationship with the ambient space $L^p(\Omega)$. In this section, we explore the implications of compactness on the composition of differential operators with vanishing functions.  


We start with the following general result for Fredholm operators.

\begin{thm}
\label{thm:fredholm_on_compactly_embedded_domains}
Let $X$ and $Y$ be Banach spaces. If $A \in \mathcal{F}(X,Y)$ then $\domain(A) \subset\subset X$ if and only if its pseudo-inverse is compact from $Y$ to $X$.
\end{thm}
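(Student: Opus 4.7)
The plan is to use the structural identities supplied by Theorem~\ref{thm:equivalence_of_fredholm} to pass between the graph norm on $\domain(A)$ and the norm on $Y$ via the pseudo-inverse $A_0$. Specifically, I will exploit $A_0 A = I - K_1$ on $\domain(A)$ and $A A_0 = I - K_2$ on $Y$, together with the finite-dimensionality of $\range(K_1) = \nul(A)$ and $\range(K_2) = Y_0$.

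For the forward direction, I would start with an arbitrary bounded sequence $\{y_n\} \subset Y$ and apply $A_0$. Since $A_0 \in \mathcal{B}(Y,X)$, the images $\{A_0 y_n\}$ are bounded in $X$. The identity $A A_0 y_n = y_n - K_2 y_n$ shows $\{A(A_0 y_n)\}$ is also bounded in $Y$, so $\{A_0 y_n\}$ is bounded in the graph norm of $\domain(A)$. The assumed compact embedding $\domain(A) \subset\subset X$ then delivers a subsequence converging in $X$, proving $A_0$ is compact.

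For the converse, I would take a sequence $\{x_n\} \subset \domain(A)$ bounded in the graph norm and manufacture an $X$-convergent subsequence. Apply $A_0$ to the bounded sequence $\{A x_n\} \subset Y$; by compactness of $A_0$, some subsequence $\{A_0 A x_{n_k}\}$ converges in $X$. The identity $A_0 A x_{n_k} = x_{n_k} - K_1 x_{n_k}$ rewrites this as $x_{n_k} = A_0 A x_{n_k} + K_1 x_{n_k}$. Because $K_1$ has finite-dimensional range $\nul(A)$ and $\{K_1 x_{n_k}\}$ is bounded, a further subsequence of $\{K_1 x_{n_k}\}$ converges in $X$. Adding the two convergent pieces produces an $X$-convergent subsequence of $\{x_n\}$, giving the compact embedding.

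No step looks technically challenging; the only conceptual care needed is to treat the graph-norm structure correctly and to remember that $K_1$, though only bounded, has finite rank and hence is itself compact. The whole proof is essentially bookkeeping around the two pseudo-inverse identities, so the main thing to verify is that the pseudo-inverse given by Theorem~\ref{thm:equivalence_of_fredholm} actually satisfies $A_0 \in \mathcal{B}(Y,X)$ as stated, which is already asserted in that theorem and hence can be cited without further argument.
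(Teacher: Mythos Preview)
Your proof is correct and follows essentially the same strategy as the paper, using the pseudo-inverse identities $A_0A = I - K_1$ and $AA_0 = I - K_2$ from Theorem~\ref{thm:equivalence_of_fredholm}. The converse direction is identical in substance: the paper's decomposition $x_n = a_n + b_n$ with $a_n \in X_0$, $b_n \in \nul(A)$ is exactly your $x_n = A_0Ax_n + K_1x_n$. For the forward direction the paper factors $A_0$ as the compact inclusion $\iota : \domain(A) \to X$ composed with the pseudo-inverse $\tilde{A}_0 \in \mathcal{B}(Y,\domain(A))$ of $A$ viewed as a map from the graph-norm space to $Y$, whereas you verify directly via $AA_0 = I - K_2$ that $A_0$ carries bounded sets in $Y$ to graph-norm-bounded sets; your route is marginally more self-contained since it avoids reinvoking the pseudo-inverse construction for the new pair of spaces, but the two arguments are otherwise the same.
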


\begin{proof}
Since $A$ is closed we may equip $\domain(A)$ with the $A$-norm and convert it into a Banach space, which we call $W$. 

\begin{claim}{1}
If $A \in \mathcal{F}(X,Y)$ with $W \subset\subset X$ then the pseudo-inverse of $A$ is compact from $Y$ to $X$.
\end{claim}

Given that $A$ is Fredholm from $X$ to $Y$, we know it is also Fredholm from $W$ to $Y$. Let $\tilde{A}_0$ denote the pseudo-inverse of $A : W \to Y$, and let $\iota : W \to X$ denote the inclusion map from $W$ to $X$. The assumption that $W \subset\subset X$ tells us that $\iota$ is compact, which implies $\iota \tilde{A}_0 : Y \to X$ is compact as well since $\tilde{A}_0 \in \mathcal{B}(Y,W)$. If we let $A_0$ denote the pseudo-inverse of $A : X \to Y$ then we see that $A_0 = \iota \tilde{A}_0$, so $A_0$ is compact.

\begin{claim}{2}
If $A \in \mathcal{F}(X,Y)$ and the pseudo-inverse of $A$ is compact from $Y$ to $X$ then $W \subset\subset X$.
\end{claim}

We are told $A$ is Fredholm, so we know  $\nul(A)$ is finite dimensional and that there exists a closed subspace $X_0 \subset X$ such that $X = X_0 \oplus \nul(A)$. Suppose $\{x_n\} \subset \domain(A)$ with $\|x_n\|_{\domain(A)} \leq c$. Then for each $n$ we have the decomposition $x_n = a_n + b_n$ where $a_n \in X_0$ and $b_n \in \nul(A)$. Since $\|a_n\|_{\domain(A)} \leq c$ for all $n$, $\{A a_n\}$ is a bounded sequence in $Y$. Given that $A_0$, the pseudo-inverse of $A$, is compact from $Y$ to $X$, there exists a subsequence of $\{a_n\} = \{A_0 A a_n\}$ that is convergent in $X$. Also, since $\{b_n\}$ is bounded and $\nul(A)$ is finite dimensional, every subsequence of $\{ b_n\}$ has a further subsequence that is convergent. Thus, we can find a subsequence of $\{b_n\}$ along the convergent subsequence of $\{a_n\}$ that is convergent. With this we can conclude that $\{x_n\} = \{a_n + b_n\}$ contains a convergent subsequence in $X$. This proves the claim and completes the proof of the theorem.
\end{proof}

As a consequence of Theorem~\ref{thm:fredholm_on_compactly_embedded_domains} we have the following.

\begin{thm}
\label{thm:not_closed_product}
Let $X$, $Y$, and $Z$ be Banach spaces. Suppose $A \in \mathcal{F}(X,Y)$ where $\domain(A)\subset\subset X$. If $B \in \mathcal{C}(Y,Z)$ but is not semi-Fredholm then $BA$ is not closed on its natural domain.
\end{thm}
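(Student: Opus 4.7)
The plan is to argue by contradiction: assume $BA$ is closed on its natural domain $\domain(BA)=\{x\in\domain(A):Ax\in\domain(B)\}$, then construct a sequence in $\domain(BA)$ that converges in $X$ and whose images under $BA$ converge in $Z$, yet whose images under $A$ do not converge in $Y$. This will contradict the boundedness of $A:\domain(BA)\to Y$, which we first extract from the assumption.

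Step one establishes this auxiliary boundedness. Equip $\domain(BA)$ with the $BA$-graph norm $\|x\|_{BA}=\|x\|_X+\|BAx\|_Z$; by the assumed closedness of $BA$ this is a Banach space. The graph of $A:\domain(BA)\to Y$ is closed: if $x_n\to x$ in the $BA$-norm and $Ax_n\to w$ in $Y$, then $x_n\to x$ in $X$, and since $A\in\mathcal{F}(X,Y)$ is closed, $Ax=w$. The Closed Graph Theorem then gives a constant $c>0$ with $\|Ax\|_Y\le c\|x\|_{BA}$ for every $x\in\domain(BA)$.

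Step two produces the test sequence. Since $B\in\mathcal{C}(Y,Z)$ is not semi-Fredholm, the characterization recalled in Section~\ref{subsection:fredholm} supplies a bounded sequence $\{y_n\}\subset\domain(B)$ with no convergent subsequence in $Y$ such that $\{By_n\}$ converges in $Z$. I would next arrange that $y_n\in\range(A)\cap\domain(B)$. Since $\range(A)$ is closed with finite codimension and $\domain(B)$ is dense in $Y$, a small perturbation of any algebraic complement of $\range(A)$ (using that the set of complements is open in the relevant product of $Y$'s) yields a complement $Y_0\subset\domain(B)$. Splitting $y_n=r_n+q_n$ with $r_n\in\range(A)$ and $q_n\in Y_0$, the sequence $\{q_n\}$ lies in a finite-dimensional subspace of $\domain(B)$, hence has a convergent subsequence, and along that subsequence $\{r_n\}$ is bounded, has no convergent subsequence, and satisfies $Br_n=By_n-Bq_n$ convergent. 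Relabeling, we may assume $y_n\in\range(A)\cap\domain(B)$.

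Step three produces the contradiction. Let $A_0$ be the pseudo-inverse of $A$ from Theorem~\ref{thm:equivalence_of_fredholm}; by Theorem~\ref{thm:fredholm_on_compactly_embedded_domains} the hypothesis $\domain(A)\subset\subset X$ makes $A_0:Y\to X$ compact. Set $x_n=A_0y_n$. Since $y_n\in\range(A)$ the identity $AA_0=I-K_2$ with $\nul(K_2)=\range(A)$ gives $Ax_n=y_n\in\domain(B)$, so $x_n\in\domain(BA)$ and $BAx_n=By_n$ converges to some $z\in Z$. Compactness of $A_0$ lets us extract a subsequence with $x_n\to x$ in $X$. The closedness of $BA$ then yields $x\in\domain(BA)$ with $BAx=z$, so $x_n\to x$ in the $BA$-norm. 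Step one finally forces $Ax_n\to Ax$ in $Y$---but $Ax_n=y_n$ admits no convergent subsequence, the desired contradiction.

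The main obstacle is the reduction in step two: extracting a test sequence inside $\range(A)\cap\domain(B)$ requires choosing a complement of $\range(A)$ that sits inside $\domain(B)$, which is the one place the hypothesis that $B$ is densely defined is genuinely used. Once this reduction is in place, the remainder of the argument is a clean application of the Closed Graph Theorem combined with compactness of the pseudo-inverse $A_0$.
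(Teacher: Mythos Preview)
Your proof is correct and follows essentially the same contradiction scheme as the paper: bound the $A$-norm by the $BA$-norm via the Closed Graph Theorem, pull back a bounded non-compact $\domain(B)$-sequence through the compact pseudo-inverse $A_0$, and derive a contradiction from $Ax_n=y_n$ failing to converge. Your Step~2---arranging the test sequence to lie in $\range(A)\cap\domain(B)$ by choosing a finite-dimensional complement of $\range(A)$ inside the dense subspace $\domain(B)$---is in fact more careful than the paper's corresponding passage, which tacitly assumes $K_2 x_n\in\domain(B)$ (and asserts ``$\{Kx_n\}$ is eventually zero'') without justification.
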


\begin{proof}
The proof is by contradiction. Assume $BA$ is closed on its natural domain,
\begin{align*}
\domain(BA) = \{ x \in \domain(A) : Ax \in \domain(B) \}.
\end{align*}

\begin{claim}{1}
There exists a $c > 0$ such that $\|x\|_{\domain(A)} \leq c\|x\|_{\domain(BA)}$ holds for all $x \in \domain(BA)$. 
\end{claim}

If $BA$ was closed on $\domain(BA)$ then $\domain(BA)$ would be a Banach space with the $BA$-norm. Since $A$ is Fredholm it must be closed on its domain $\domain(A)$, so $\domain(A)$ is also a Banach space with the $A$-norm. We know that $\domain(BA) \subset \domain(A)$, so by Lemma~\ref{lem:equivalence_of_norms} there exists a $c > 0$ such that $\|x \|_{\domain(A)} \leq c\|x\|_{\domain(BA)}$ whenever $x \in \domain(BA)$.

\begin{claim}{2}
There exists a sequence that converges in $\domain(BA)$ but does not converge in $\domain(A)$.
\end{claim}

Since $B$ is not semi-Fredholm, there exists a bounded sequence $\{x_n\} \subset \domain(B)$ such that $\{B x_n \}$ converges but $\{x_n\}$ has no convergent subsequence. Given that $A$ is Fredholm we know $A$ has a pseudo-inverse, which we denote by $A_0$.  We then set $y_n = A_0 x_n$ and notice that
\begin{align*}
Ay_n = A A_0 x_n = (I - K) x_n,
\end{align*}
where $K$ is a projection into some finite dimensional subspace of $Y$. Since $\{x_n\}$ has no convergent subsequence and $K$ projects to a finite dimensional subspace, $\{K x_n\}$ is eventually zero. Thus, $\{A y_n\}$ has no convergent subsequence and $\{BAy_n\}$ converges. Since $\domain(A) \subset\subset X$, we know that $A_0$ is compact by Theorem~\ref{thm:fredholm_on_compactly_embedded_domains} so $\{y_n\}$ has a convergent subsequence in $X$ (which, after relabeling, we take to be the entire sequence). Using claim 1, we have
\begin{align*}
\|y_m - y_n\|_{\domain(A)}
& = \|y_m - y_n\|_X + \|Ay_m - Ay_n\|_Y \\
& \leq c \|y_m - y_n\|_{\domain(BA)} = c \|y_m - y_n\|_X + c\|BA y_m - BAy_n\|_Z.
\end{align*}
We have established that $\{BAy_n\}$ and $\{y_n\}$ converge in $Z$ and $X$ respectively, so $\{y_n\}$ is convergent in $\domain(BA)$. But we know that $\{Ay_n\}$ does not converge in $Y$, hence $\{y_n\}$ cannot converge in $\domain(A)$. This is the desired contradiction.
\end{proof}

If $\varphi \in \mathscr{C}^1(\bar{\Omega})$ is simply vanishing, then by Theorem~\ref{thm:range closed vanishing} the range of the multiplication operator $u \mapsto \varphi u$ is not closed in $L^p(\Omega)$. Thus, $\varphi$ cannot be semi-Fredholm from $L^p(\Omega)$ to $L^p(\Omega)$. The above theorem then says $\varphi^m A$ is never closed on its natural domain for any $m > 0$. However, we can partially make up for this loss by showing that $\varphi^m A$ is closable. Before we begin we will need a few more tools from classical functional analysis. 

The adjoint operator of $A$, denoted $A^*$, is a map from the dual $Y^*$ to $X^*$, where
\begin{align}\label{adjoint def}
A^*y^*(x) = y^*(Ax), \qquad \text{for all } x \in \domain(A),
\end{align}
and some $y^* \in Y^*$. The set of appropriate $y^* \in Y^*$ for which \eqref{adjoint def} holds is $\domain(A^*)$. 

The following two results are needed. 


\begin{lem}[\cite{kato1995perturbation} Lemma 131, p. 137]
Let $X$ be a normed vector space. Suppose that a sequence $\{x_n\} \subset X$ is bounded, and $\lim l^*(x_n) = l^*(x)$ for each $l^* \in V$ where $V$ is a dense subset of $X^*$. Then the sequence $\{x_n\}$ converges to $x$ weakly.
\label{lem:weak_uniform_boundedness}
\end{lem}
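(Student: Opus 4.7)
The plan is to prove this by a standard three-epsilon approximation argument, exploiting the density of $V$ in $X^*$ together with the uniform boundedness of $\{x_n\}$. Weak convergence $x_n \rightharpoonup x$ means $l^*(x_n) \to l^*(x)$ for every $l^* \in X^*$, so I must extend the hypothesized pointwise convergence from $V$ to all of $X^*$.

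First I would set $M := \sup_n \|x_n\|_X$, which is finite by the boundedness assumption, and fix an arbitrary $l^* \in X^*$ together with an arbitrary $\epsilon > 0$. Using density of $V$ in $X^*$, I would choose $l_\epsilon^* \in V$ satisfying
\begin{align*}
\|l^* - l_\epsilon^*\|_{X^*} < \frac{\epsilon}{M + \|x\|_X + 1}.
\end{align*}
The denominator is positive and finite because $x \in X$ is fixed, so $\|x\|_X$ is finite. Then I would insert $l_\epsilon^*$ into the difference $l^*(x_n) - l^*(x)$ and apply the triangle inequality:
\begin{align*}
|l^*(x_n) - l^*(x)|
& \leq |(l^* - l_\epsilon^*)(x_n)| + |l_\epsilon^*(x_n) - l_\epsilon^*(x)| + |(l_\epsilon^* - l^*)(x)| \\
& \leq \|l^* - l_\epsilon^*\|_{X^*}(M + \|x\|_X) + |l_\epsilon^*(x_n) - l_\epsilon^*(x)|.
\end{align*}
The first term is bounded by $\epsilon$ uniformly in $n$ by the choice of $l_\epsilon^*$, while the second term tends to $0$ as $n \to \infty$ since $l_\epsilon^* \in V$ and the hypothesis applies.

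Taking the limit supremum as $n \to \infty$ would then give $\limsup_n |l^*(x_n) - l^*(x)| \leq \epsilon$, and since $\epsilon > 0$ was arbitrary, $l^*(x_n) \to l^*(x)$. Since $l^* \in X^*$ was arbitrary, this is precisely the definition of weak convergence $x_n \rightharpoonup x$. There is no real obstacle here --- the result is essentially a soft consequence of density plus uniform boundedness, with the only mild care being to keep $\|x\|_X$ (a fixed finite quantity) inside the bound when estimating the ``tail'' term $|(l_\epsilon^* - l^*)(x)|$.
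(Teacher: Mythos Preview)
Your argument is correct and is the standard density-plus-uniform-boundedness proof of this fact. Note, however, that the paper does not supply its own proof of this lemma: it is simply quoted from Kato's \emph{Perturbation Theory for Linear Operators} (Lemma~131, p.~137), so there is no in-paper argument to compare against. Your write-up is essentially the proof one finds in Kato.
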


\begin{thm}[\cite{schechter2002principles}, Theorems 7.35 and 7.36, p. 178]
\label{thm:densely_defined_adjoint}
Let $X$, $Y$, and $Z$ be Banach spaces, and assume that $A \in \mathcal{C}(X,Y)$ where $\range(A)$ is closed in $Y$ with finite co-dimension. Let $B$ be a densely defined operator from $Y$ to $Z$. Then $(BA)^*$ exists, $(BA)^* = A^*B^*$, and both $(BA)^{*}$ and $BA$ are densely defined.
\end{thm}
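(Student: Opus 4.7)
The theorem makes three assertions that I would establish in sequence: that $\domain(BA)$ is dense in $X$ (so $(BA)^*$ exists), the adjoint identity $(BA)^* = A^*B^*$, and density of $\domain((BA)^*)$ in $Z^*$. The structural device underpinning the first two parts is a topological decomposition of $Y$ that exploits the finite co-dimension of $\range(A)$ together with the density of $\domain(B)$.

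To show $\domain(BA)$ is dense, I would first construct a topological direct sum $Y = \range(A) \oplus N$ with $N \subset \domain(B)$. Since $\dim(Y/\range(A)) < \infty$ and $\domain(B)$ is a dense linear subspace of $Y$, one can select $d_1, \ldots, d_n \in \domain(B)$ whose cosets form a basis of $Y/\range(A)$; set $N \coloneqq \text{span}\{d_1,\ldots,d_n\}$ and let $P \in \mathcal{B}(Y)$ project onto $\range(A)$ along $N$. For any $y \in \domain(B)$, $Py = y - (y - Py)$ with $y - Py \in N \subset \domain(B)$, so $Py \in \domain(B) \cap \range(A)$. Continuity of $P$ combined with density of $\domain(B)$ gives $\domain(B) \cap \range(A)$ dense in $\range(A)$. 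The open mapping theorem applied to the induced bijection $\tilde{A} : \domain(A)/\nul(A) \to \range(A)$ then lets me lift any approximation of $Ax$ in $\range(A) \cap \domain(B)$ to $x_n \in \domain(A)$ with $Ax_n \in \domain(B)$ and $x_n \to x$ in the graph norm. Hence $\domain(BA)$ is dense in $\domain(A)$ in the $A$-norm, and in particular in $X$.

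For the identity $(BA)^* = A^*B^*$, the inclusion $A^*B^* \subset (BA)^*$ is immediate from the chain $(A^*B^*z^*)(x) = (B^*z^*)(Ax) = z^*(BAx)$ for $x \in \domain(BA)$. For the reverse, fix $z^* \in \domain((BA)^*)$. To show $z^* \in \domain(B^*)$, decompose any $y \in \domain(B)$ as $y = Py + (y - Py)$ with $Py \in \range(A) \cap \domain(B)$ and $y - Py \in N$; lifting $Py$ via the open-mapping estimate produces $x \in \domain(BA)$ with $\|x\| \leq c\|Py\|$ modulo $\nul(A)$, giving $|z^*(By)| \leq |z^*(BAx)| + |z^*(B(y-Py))| \leq c'\|x\| + c''\|y - Py\|$, where the second term is controlled because $B$ restricted to the finite-dimensional space $N$ is automatically bounded. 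Hence $z^* \in \domain(B^*)$. The inclusion $B^*z^* \in \domain(A^*)$ then follows by approximating $x \in \domain(A)$ by $x_n \in \domain(BA)$ with $Ax_n \to Ax$ (available from the graph-norm density above) and passing $|B^*z^*(Ax_n)| = |z^*(BAx_n)| \leq c\|x_n\|$ to the limit, using continuity of $B^*z^* \in Y^*$.

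The remaining claim, density of $\domain(A^*B^*)$ in $Z^*$, is the main obstacle and is where the full Fredholm hypothesis on $A$ pays off: $A^* \in \mathcal{F}(Y^*, X^*)$ has closed range of finite co-dimension equal to $\dim \nul(A)$, so I would mirror the argument of step one with $(A^*, B^*)$ in place of $(B, A)$. This requires knowing $\domain(B^*)$ is dense in $Z^*$, which is the delicate point: in reflexive $Y, Z$ it follows from $BA$ being closable, and one verifies closability of $BA$ using the same decomposition $Y = \range(A) \oplus N$ to rule out pathological sequences. Once $\domain(B^*)$ is dense, the projection-and-lift scheme from the first step, applied to the Fredholm operator $A^*$, produces density of $\{z^* \in \domain(B^*) : B^*z^* \in \domain(A^*)\} = \domain(A^*B^*)$ in $Z^*$, completing the proof.
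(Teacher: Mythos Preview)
The paper does not supply its own proof of this statement; it is quoted from Schechter's book (Theorems 7.35 and 7.36), so there is no in-paper argument to compare against. I can only evaluate your proposal directly.

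Your treatment of the first two assertions is correct. The decomposition $Y = \range(A) \oplus N$ with $N \subset \domain(B)$, continuity of the associated projection, and the open-mapping lift give density of $\domain(BA)$ in the $A$-graph norm and hence in $X$; the two inclusions establishing $(BA)^* = A^*B^*$ go through as you describe.

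The gap is in the third assertion. Your ``mirror'' step misaligns the roles: in $A^*B^*$ the \emph{inner} operator is $B^*$ and the outer one is $A^*$, but the step-one mechanism requires the inner operator to have closed range of finite co-dimension. You have that for $A^*$ (after additionally assuming $\dim\nul(A)<\infty$, which is not among the hypotheses), not for $B^*$. The fallback through closability of $BA$ does not rescue the argument either, and in fact cannot: under the hypotheses as recorded in the paper, $\domain((BA)^*)$ need not be dense at all. Take $X=Y=Z=\ell^2$, $A=I$, and $B e_n = n\, e_1$ on the finitely supported sequences. Then $A \in \mathcal{C}(X,Y)$ with $\range(A)=Y$ closed of co-dimension $0$, and $B$ is densely defined, yet $\domain\big((BA)^*\big) = \domain(B^*) = e_1^{\perp}$, a proper closed hyperplane of $Z^*$. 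So the density of $\domain((BA)^*)$ simply does not follow from the stated hypotheses; Schechter's original Theorems 7.35--7.36 presumably carry an additional assumption (for instance that $B$ be closable) that was dropped when the result was transcribed into the paper. Your instinct that this step is ``the delicate point'' was right, but the obstruction is not a missing technical estimate---it is a missing hypothesis.
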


With the above lemma and theorem, we can conclude the following.

\begin{thm}\label{thm:closable_product}
Let $X$, $Y$, and $Z$ be Banach spaces. Suppose $A \in \mathcal{F}(X,Y)$ and $B$ is a densely defined linear operator from $Y$ to $Z$. Then $BA$ is closable.
\end{thm}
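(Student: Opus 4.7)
The plan is to exploit the duality-based criterion for closability: a densely defined linear operator between Banach spaces is closable whenever its adjoint is densely defined. Theorem~\ref{thm:densely_defined_adjoint} is tailor-made to deliver this. Since $A$ is Fredholm, it is closed and has closed range of finite co-dimension, and by hypothesis $B$ is densely defined, so that theorem applies to give $(BA)^{*} = A^{*} B^{*}$, defined on a dense subspace of $Z^{*}$, and it also tells us that $BA$ is itself densely defined.

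With density of $\domain((BA)^{*})$ in hand, I would check closability directly from the sequential definition rather than invoke a general black-box theorem. Take any sequence $\{x_n\} \subset \domain(BA)$ with $x_n \to 0$ in $X$ and $BA x_n \to z$ in $Z$; the goal is $z = 0$. For every $z^{*} \in \domain((BA)^{*})$ the defining adjoint identity yields
\[
z^{*}(BA x_n) \;=\; \bigl((BA)^{*} z^{*}\bigr)(x_n).
\]
Letting $n \to \infty$, the left-hand side tends to $z^{*}(z)$ by continuity of $z^{*}$, while the right-hand side tends to $0$ because $(BA)^{*} z^{*} \in X^{*}$ is a bounded linear functional and $x_n \to 0$ in $X$. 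Hence $z^{*}(z) = 0$ for every $z^{*}$ in a dense subset of $Z^{*}$. A routine norm-continuity approximation extends this to all $z^{*} \in Z^{*}$, and Hahn--Banach then forces $z = 0$.

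I do not anticipate a serious obstacle: the one non-trivial ingredient, density of $\domain((BA)^{*})$ in $Z^{*}$, is supplied verbatim by Theorem~\ref{thm:densely_defined_adjoint}, and the remaining argument is a standard one-line duality computation. The only small care needed is in verifying the hypotheses of that theorem, namely that $A \in \mathcal{F}(X,Y)$ implies both $A \in \mathcal{C}(X,Y)$ and that $\range(A)$ is closed with finite co-dimension — both immediate from the definition of Fredholm operator recorded in Section~\ref{subsection:fredholm}.
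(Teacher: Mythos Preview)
Your proposal is correct and follows essentially the same approach as the paper: invoke Theorem~\ref{thm:densely_defined_adjoint} to get density of $\domain((BA)^*)$, then use the adjoint identity on a sequence $x_n \to 0$ with $BAx_n \to z$ to conclude $z^*(z)=0$ on a dense set of functionals. The only cosmetic difference is that the paper routes the final step through Lemma~\ref{lem:weak_uniform_boundedness} (concluding $BAx_n \to 0$ weakly and then matching weak and strong limits), whereas you argue directly that the continuous functional $z^* \mapsto z^*(z)$ vanishes on a dense set and hence everywhere; both finishes are equivalent.
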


\begin{proof}
Since $A$ is Fredholm, the domains of $BA$ and $(BA)^*$ are both dense, by Theorem~\ref{thm:densely_defined_adjoint}. Suppose we have a sequence $\{x_n\} \subset \domain(BA)$ where $x_n \to 0$ and $B Ax_n \to z$ as $n \to \infty$. Then for each $w^* \in \domain((BA)^*)$,
\begin{align*}
w^* (z) = \lim_{n\to\infty} w^*(B Ax_n) = \lim_{n\to\infty} (BA)^* w^*(x_n) = 0.
\end{align*}
Since $\domain((BA)^*)$ is dense in $X^*$ and $B Ax_n$ is bounded, this implies that $BAx_n \to 0$ weakly as $n\to\infty$, by Lemma~\ref{lem:weak_uniform_boundedness}. We know that weak limits must coincide with strong limits so $z = 0$. Thus $B A$ is closable.
\end{proof}

Theorems~\ref{thm:not_closed_product} and \ref{thm:closable_product} yield the following result.

\begin{thm}\label{thm:not closed but closable}
Let $\Omega \subset \R^d$ be an open and bounded set with $\mathscr{C}^{0,1}$ boundary. Let $\varphi \in \mathscr{C}^{1}(\bar{\Omega})$ be simply vanishing on $\partial \Omega$. 
If $A : L^p(\Omega) \to L^p(\Omega)$ is Fredholm with $\domain(A) \subset W^{1,p}(\Omega)$, then $\varphi^m A$, for $m \geq 1$, is not closed on its natural domain but is closable.
\end{thm}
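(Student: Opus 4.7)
The plan is to combine Theorems~\ref{thm:not_closed_product} and \ref{thm:closable_product} by taking $X = Y = Z = L^p(\Omega)$, with $A$ the given Fredholm operator and $B = \varphi^m$ regarded as the bounded multiplication operator on $L^p(\Omega)$. Since $\varphi \in \mathscr{C}^1(\bar{\Omega})$ is bounded, $\varphi^m$ has $\domain(\varphi^m) = L^p(\Omega)$, so the natural domain of $\varphi^m A$ coincides with $\domain(A)$, matching the claim in the theorem.

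To apply Theorem~\ref{thm:not_closed_product}, I would verify two facts. First, $\domain(A) \subset\subset L^p(\Omega)$: because $A$ is Fredholm, and thus closed, $\domain(A)$ equipped with the $A$-norm is a Banach space, and by hypothesis it embeds into $W^{1,p}(\Omega)$. Lemma~\ref{lem:equivalence_of_norms} makes this inclusion continuous, and composing with the Rellich-Kondrachov compact embedding $W^{1,p}(\Omega) \subset\subset L^p(\Omega)$ (valid since $\partial \Omega$ is Lipschitz) gives the desired compactness. Second, $\varphi^m \in \mathcal{C}(L^p(\Omega))$ but is not semi-Fredholm: boundedness provides closedness, while Theorem~\ref{thm:range closed vanishing} applied with $k = 0 < m$ shows that the range of $\varphi^m$ is not closed in $W^{0,p}(\Omega) = L^p(\Omega)$, precluding semi-Fredholmness. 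Theorem~\ref{thm:not_closed_product} then delivers the non-closedness of $\varphi^m A$ on its natural domain.

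Closability follows immediately from Theorem~\ref{thm:closable_product}: $A$ is Fredholm by hypothesis, and $\varphi^m$ is densely defined on $L^p(\Omega)$ (indeed, defined everywhere), so $\varphi^m A$ is closable.

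Rather than presenting a serious technical obstacle, the main conceptual content of the argument is recognizing that Theorem~\ref{thm:range closed vanishing} can be applied at the endpoint $k = 0$, identifying $W^{0,p}(\Omega)$ with $L^p(\Omega)$, in order to produce the failure of semi-Fredholmness for $\varphi^m$ as a map $L^p(\Omega) \to L^p(\Omega)$. Once this is noted, the theorem falls out by pure assembly of the machinery already in place.
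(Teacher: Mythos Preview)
Your proposal is correct and follows essentially the same route as the paper: invoke Theorem~\ref{thm:range closed vanishing} at $k=0$ to see that $\varphi^m$ fails to be semi-Fredholm on $L^p(\Omega)$, use Rellich--Kondrachov for $\domain(A)\subset\subset L^p(\Omega)$, then apply Theorems~\ref{thm:not_closed_product} and \ref{thm:closable_product}. The only cosmetic difference is that you make the continuous inclusion $\domain(A)\hookrightarrow W^{1,p}(\Omega)$ explicit via Lemma~\ref{lem:equivalence_of_norms}, which the paper leaves implicit.
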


\begin{proof}
Theorem~\ref{thm:range closed vanishing} tells us that the range of the multiplication operator $u \mapsto \varphi^{m} u$ is not closed in $L^p(\Omega)$, so $\varphi^m$ is not semi-Fredholm from $L^p(\Omega)$ to $L^p(\Omega)$. Since $\Omega$ is bounded with $\mathscr{C}^{0,1}$ boundary, this implies $\domain(A) \subset W^{1,p}(\Omega) \subset\subset L^p(\Omega)$ by the Rellich-Kondrachov Theorem. Theorem~\ref{thm:not_closed_product} then tells us that $\varphi^m A$ is not closed on its natural domain, but by Theorem~\ref{thm:closable_product} $\varphi^m A$ is closable.
\end{proof}

Our next result makes use of the following theorem.

\begin{thm}[\cite{schechter2002principles} Theorem  7.22, p. 170] \label{thm:reflexive adjoint fredholm}
Let $X$ and $Y$ be Banach spaces. If $A \in \mathcal{F}(X,Y)$ and $Y$ is reflexive, then $A^* \in \mathcal{F}(Y^*, X^*)$ and $\ind(A^*) = -\ind(A)$.
\end{thm}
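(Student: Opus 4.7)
The plan is to verify each Fredholm property for $A^*$ in turn, using Banach's closed range theorem together with the standard duality between nullspaces and range annihilators. First I would argue that $A^*$ is closed and densely defined. Closedness of the adjoint of any densely defined operator is immediate from the definition of $A^*$ (its graph is, up to a sign flip, the annihilator in $Y^* \times X^*$ of the graph of $A$). Denseness of $\domain(A^*)$ in $Y^*$ is the one place where reflexivity of $Y$ enters essentially: in a reflexive space the double-annihilator of the graph of $A$ in $Y \times X$ recovers the graph itself, and a Hahn--Banach argument on $Y^{**} \cong Y$ then forces any functional annihilating $\domain(A^*)$ to correspond to the zero element of $Y$, which combined with density of $\domain(A)$ in $Y$ yields density of $\domain(A^*)$ in $Y^*$.

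Next I would identify the nullspace and range of $A^*$. A direct computation from the definition gives $\nul(A^*) = \range(A)^{\perp}$; since $\range(A)$ is closed of finite codimension $n \coloneqq \co\dim \range(A)$, the annihilator is isomorphic to $(Y/\range(A))^*$, so $\dim \nul(A^*) = n$ and in particular is finite. Banach's closed range theorem, applicable since $A$ is closed and densely defined, then gives that $\range(A^*)$ is closed in $X^*$ and equals $\nul(A)^{\perp}$. Writing $k \coloneqq \dim \nul(A)$, which is finite by the Fredholm hypothesis, the quotient $X^* / \nul(A)^{\perp}$ is isomorphic to the dual of the finite-dimensional space $\nul(A)$, so $\co\dim \range(A^*) = k$. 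Assembling these, $\ind(A^*) = \dim \nul(A^*) - \co\dim \range(A^*) = n - k = -\ind(A)$, as claimed.

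The main obstacle I expect is the denseness of $\domain(A^*)$, which is the only step where reflexivity is genuinely used; without it, $A^*$ can fail to be densely defined and the notion of an adjoint-as-Fredholm-operator breaks down. The remaining ingredients--the two annihilator identifications and Banach's closed range theorem--are mechanical once $A$ is known to be closed with closed range, and each translates the finite-dimensional data $(\dim \nul(A), \co\dim \range(A))$ to the dual data $(\co\dim \range(A^*), \dim \nul(A^*))$ via the canonical isomorphism $M^* \cong X^*/M^{\perp}$ for a finite-dimensional subspace $M$.
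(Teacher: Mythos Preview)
The paper does not prove this theorem; it is quoted verbatim from Schechter's book with a citation and no accompanying argument, so there is no in-paper proof to compare against. Your proposal is the standard proof and is correct in outline: closedness of $A^*$ is automatic, the identifications $\nul(A^*)=\range(A)^{\perp}$ and $\range(A^*)=\nul(A)^{\perp}$ via the closed range theorem transfer the finite numbers $\co\dim\range(A)$ and $\dim\nul(A)$ to $\dim\nul(A^*)$ and $\co\dim\range(A^*)$ respectively, and the index identity follows. One small slip: you wrote ``density of $\domain(A)$ in $Y$,'' but $\domain(A)\subset X$; what the density-of-$\domain(A^*)$ step actually uses is that $A$ is closed (so $(0,y)$ lies in $G(A)$ once it is shown to lie in its closure, forcing $y=0$) together with the reflexive identification $Y^{**}\cong Y$ to turn an annihilating element of $Y^{**}$ into such a $y$.
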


We now have the following:

\begin{thm}\label{thm:reflexive adjoint compactness}
Let $X$ and $Y$ be Banach spaces where $Y$ is also reflexive. If $A \in \mathcal{F}(X,Y)$ with $\domain(A) \subset\subset X$ then $\domain(A^*) \subset\subset Y^*$.
\end{thm}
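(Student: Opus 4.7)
The plan is to show that the adjoint of the pseudo-inverse of $A$ serves as a pseudo-inverse for $A^*$, and then transfer compactness across adjoints via Schauder's theorem. In essence, the argument is a ``dualization'' of Theorem~\ref{thm:fredholm_on_compactly_embedded_domains}.

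First, I would invoke Theorem~\ref{thm:reflexive adjoint fredholm} to obtain $A^* \in \mathcal{F}(Y^*, X^*)$; in particular $\domain(A^*)$ is dense in $Y^*$ and $A^*$ is closed. Then I would apply Theorem~\ref{thm:fredholm_on_compactly_embedded_domains} to $A$ to produce a compact pseudo-inverse $A_0 \in \mathcal{B}(Y,X)$ satisfying $A A_0 = I - K_2$ on $Y$ and $A_0 A = I - K_1$ on $\domain(A)$, with $K_1, K_2$ finite-rank projections as in Theorem~\ref{thm:equivalence_of_fredholm}. By Schauder's theorem the bounded adjoint $A_0^* : X^* \to Y^*$ is also compact.

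Next, I would dualize the pseudo-inverse relations. For $y^* \in \domain(A^*)$ and $y \in Y$, the computation
\[
(A_0^* A^* y^*)(y) = (A^* y^*)(A_0 y) = y^*(A A_0 y) = y^*(y) - (K_2^* y^*)(y)
\]
yields $A_0^* A^* = I - K_2^*$ on $\domain(A^*)$. A parallel computation applied to $v \in \domain(A)$ shows that for every $x^* \in X^*$ the functional $v \mapsto x^*(A_0 A v) = x^*(v) - (K_1^* x^*)(v)$ extends boundedly from $\domain(A)$ to $X$, hence $A_0^* x^* \in \domain(A^*)$ and $A^* A_0^* = I - K_1^*$ on $X^*$. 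Both $K_1^*$ and $K_2^*$ are finite-rank.

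Finally, rather than re-invoking Theorem~\ref{thm:fredholm_on_compactly_embedded_domains} in reverse, I would argue directly: given a sequence $\{y_n^*\} \subset \domain(A^*)$ bounded in the $A^*$-norm, both $\{y_n^*\}$ and $\{A^* y_n^*\}$ are bounded, and the identity
\[
y_n^* = A_0^* A^* y_n^* + K_2^* y_n^*
\]
splits off a precompact part (from $A_0^*$ acting on a bounded sequence) and a finite-rank part (since $K_2^*$ has finite-dimensional range). Extracting a common subsequence yields convergence in $Y^*$, establishing $\domain(A^*) \subset\subset Y^*$. The main obstacle is the bookkeeping in the third paragraph---verifying that $A_0^* x^*$ lies in $\domain(A^*)$ for all $x^* \in X^*$, which requires carefully tracking that functionals defined initially on $\domain(A)$ extend boundedly to $X$. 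Reflexivity of $Y$ enters only through Theorem~\ref{thm:reflexive adjoint fredholm}; the rest is pure adjoint bookkeeping.
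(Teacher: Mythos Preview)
Your proposal is correct and follows essentially the same approach as the paper: both verify that $A_0^*$ satisfies the pseudo-inverse identities $A_0^* A^* = I - K_2^*$ and $A^* A_0^* = I - K_1^*$, pass compactness of $A_0$ to $A_0^*$ via Schauder, and then conclude. The only difference is cosmetic: where you expand the final compactness step by hand (splitting $y_n^* = A_0^* A^* y_n^* + K_2^* y_n^*$), the paper simply invokes Theorem~\ref{thm:fredholm_on_compactly_embedded_domains} in the reverse direction applied to $A^*$, which amounts to the same argument.
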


\begin{proof}
By Theorem~\ref{thm:reflexive adjoint fredholm} we know that $A^* \in \mathcal{F}(Y^*, X^*)$. Also, if $x^* \in \domain(A^*)$ then
\begin{align}\label{adjoint equation pseudo inverse}
A^*_0 A^*x^*(x) = A^*x^*(A_0x) = x^*(AA_0x) = x^*(x - K_2x) = (I - K_2^*)x^*(x).
\end{align}
Thus, from \eqref{adjoint equation pseudo inverse} and Theorems~\ref{thm:densely_defined_adjoint} and \ref{thm:equivalence_of_fredholm} we conclude:
\begin{align*}
A^* A_0^* = (A_0A)^* = I - K_1^*, \qquad A^*_0 A^* = (AA_0)^* = I - K_2^*,
\end{align*}
which implies $A_0^*$ is the pseudo-inverse of $A^*$. Since $A \in \mathcal{F}(X,Y)$, and $\domain(A) \subset\subset X$, we know that the pseudo-inverse $A_0$ is compact from $Y$ to $X$ by Theorem~\ref{thm:fredholm_on_compactly_embedded_domains}. Given $A_0$ is compact from $Y$ to $X$, we know $A_0^*$ is compact from $X^*$ to $Y^*$. If we then apply Theorem~\ref{thm:fredholm_on_compactly_embedded_domains} to $A^*$ we get $\domain(A^*) \subset\subset Y^*$.
\end{proof}

\subsection{The Spectrum} \label{section:spectrum} 

Let $X$ be a Banach space and $A$ be a densely defined linear operator from $X$ to $X$. The resolvent set of $A$, denoted $\rho(A)$, is the set of all $\lambda \in \C$ such that $A - \lambda$ has a bounded inverse. The complement of $\rho(A)$ in $\C$ is called the spectrum of $A$, and is denoted as $\sigma(A)$. We let $\sigmap(A)$ denote the point spectrum of $A$:
\begin{align*}
\sigmap(A) \coloneqq \{ \lambda \in \C : Ax = \lambda x \text{ for some } x \in \domain(A) \}.
\end{align*}
We define the essential spectrum as:
\begin{align*}
\ess(A) = \bigcap_{K \in \mathcal{K}(X)} \sigma(A + K),
\end{align*}
where $\mathcal{K}(X)$ is the set of all compact operators on $X$. This set is sometimes referred to as \emph{Schechter's essential spectrum}. Another useful characterization of the essential spectrum is given in the theorem below.

\begin{thm}[\cite{schechter2002principles} Theorem 7.27, p. 172]
\label{thm:equivalence_of_schecter_spectrum}
Let $X$ be a Banach space and assume $A \in \mathcal{C}(X)$. Then $\lambda \not\in \ess(A)$ if and only if $A-\lambda \in \mathcal{F}(X)$ and $\ind(A-\lambda) = 0$.
\end{thm}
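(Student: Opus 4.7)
The plan is to prove the two directions separately, using in each direction the stability of the Fredholm property under compact perturbations together with a direct finite-rank construction in the less trivial direction.

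For the ``$\Leftarrow$'' direction, assume $A-\lambda \in \mathcal{F}(X)$ with $\ind(A-\lambda)=0$. Set $n \coloneqq \dim \nul(A-\lambda) = \co\dim \range(A-\lambda) < \infty$. I would invoke Theorem~\ref{thm:equivalence_of_fredholm} to obtain topological decompositions $X = X_0 \oplus \nul(A-\lambda)$ and $X = \range(A-\lambda) \oplus Y_0$ with $\dim Y_0 = n$. Picking bases $\{e_1,\ldots,e_n\}$ of $\nul(A-\lambda)$ and $\{f_1,\ldots,f_n\}$ of $Y_0$, define $K : X \to X$ by $K\bigl(x_0 + \sum_i c_i e_i\bigr) = \sum_i c_i f_i$ for $x_0 \in X_0$. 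This $K$ is bounded (the projection onto $\nul(A-\lambda)$ along the closed subspace $X_0$ is bounded) and of finite rank, hence compact. On $X_0 \cap \domain(A)$ the operator $A+K-\lambda$ agrees with $A-\lambda$ and maps bijectively onto $\range(A-\lambda)$, while on $\nul(A-\lambda)$ it sends $e_i$ to $f_i$, giving an isomorphism onto $Y_0$. Thus $A+K-\lambda$ is bijective from $\domain(A)$ onto $X$; it is closed (sum of the closed operator $A-\lambda$ with a bounded one), so the Closed Graph Theorem yields a bounded inverse, i.e., $\lambda \in \rho(A+K)$, so $\lambda \notin \ess(A)$.

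For the ``$\Rightarrow$'' direction, suppose $\lambda \notin \ess(A)$. Then there exists a compact $K$ with $\lambda \in \rho(A+K)$, so $A+K-\lambda$ is a bounded bijection, in particular Fredholm of index zero. Writing $A-\lambda = (A+K-\lambda) + (-K)$, I would invoke the classical stability of the Fredholm class and its index under compact perturbations (Atkinson's theorem, stated for closed operators in \cite{schechter2002principles}) to conclude $A-\lambda \in \mathcal{F}(X)$ with $\ind(A-\lambda) = \ind(A+K-\lambda) = 0$.

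The main technical point will be the bookkeeping in the first direction: making sure the decomposition $X = X_0 \oplus \nul(A-\lambda)$ given by Theorem~\ref{thm:equivalence_of_fredholm} is actually topological so that $K$ is bounded, and verifying that the constructed $A+K-\lambda$ is bijective on $\domain(A)$ (note that $\nul(A-\lambda) \subset \domain(A)$, so adding the bounded finite-rank $K$ does not enlarge or shrink the domain). The ``$\Rightarrow$'' direction is a straightforward citation of Atkinson-type stability and should require no new work.
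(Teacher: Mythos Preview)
Your proof is correct. Note, however, that the paper does not actually prove this theorem: it is quoted verbatim from \cite{schechter2002principles} as a black-box result and used without argument. So there is no ``paper's own proof'' to compare against; you have supplied a complete and standard proof where the paper simply cites one.

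A few brief comments on your write-up. In the ``$\Leftarrow$'' direction your bookkeeping is fine: Theorem~\ref{thm:equivalence_of_fredholm} gives $X_0$ closed, and since $\nul(A-\lambda)$ is finite-dimensional the projection onto it along $X_0$ is automatically bounded, so $K$ is bounded finite-rank as you claim. Bijectivity of $A+K-\lambda$ follows exactly as you outline, using that $(A-\lambda)|_{X_0\cap\domain(A)}$ is a bijection onto $\range(A-\lambda)$ and that the two image pieces $\range(A-\lambda)$ and $Y_0$ are complementary. In the ``$\Rightarrow$'' direction, the stability result you need is precisely Theorem~\ref{thm:fredholm_alternative} in the paper (compact perturbations preserve the Fredholm class and index), so you may cite that directly rather than invoking Atkinson's theorem by name; since $-K$ is compact on $X$ it is a fortiori compact relative to the closed operator $A+K-\lambda$.
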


We will also need the notion of relatively compact operators. If $A \in \mathcal{C}(X,Y)$, an operator $B : X \to Z$ is called \emph{compact relative to $A$} if $B$ is compact from the Banach space $\domain(A)$ to $Z$. The following theorem is a more robust version of the Fredholm Alternative since it is stated for any Fredholm operator $A$ (not just the identity operator) and the perturbations to $A$ can be any operator that is compact relative to $A$. A proof of this theorem can be found in \cite[p. 281]{kato1958perturbation} Theorem 1, or \cite[p. 162]{schechter2002principles} Theorem 7.10.

\begin{thm}[\cite{kato1958perturbation} Theorem 1, p. 281]
\label{thm:fredholm_alternative}
If $A \in \mathcal{F}(X,Y)$ and $B$ is compact relative to $A$ then $A + B \in \mathcal{F}(X,Y)$ and $\ind(A+B) = \ind(A)$.
\end{thm}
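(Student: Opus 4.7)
The plan is to reduce the unbounded perturbation problem to the classical statement that a compact perturbation of a bounded Fredholm operator is Fredholm with the same index. First I would equip $\domain(A)$ with the graph norm $\|\cdot\|_A$ to form a Banach space $W$. Since the nullspace, range, and co-dimension of the range depend only on the algebra and the target topology, $A: W \to Y$ is bounded Fredholm with the same index as $A: X \to Y$. By hypothesis, $B$ is compact from $W$ to $Y$. The classical Riesz--Schauder compact-perturbation theorem for bounded Fredholm operators then gives $A+B: W \to Y$ bounded Fredholm with $\ind(A+B) = \ind(A)$.

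To conclude I would transfer this information back to the unbounded setting, where $A+B$ has domain $\domain(A) \subset X$. Density of $\domain(A)$ in $X$ is automatic since $A \in \mathcal{F}(X,Y)$. Finite-dimensionality of the nullspace, closedness of the range in $Y$, and finite co-dimension of the range in $Y$ are all intrinsic to the target Banach space $Y$ and the algebraic image, so they pass over directly from the bounded version $A+B:W \to Y$. The one substantive check remaining is that $A+B$ is closed as an operator from $X$ to $Y$ on the domain $\domain(A)$.

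For that, suppose $x_n \to x$ in $X$ and $(A+B)x_n \to y$ in $Y$. The key step will be to show $\{Ax_n\}$ is bounded in $Y$: once that is in hand, $\{x_n\}$ lies in a bounded set of $W$, so $A$-compactness of $B$ extracts a subsequence with $Bx_{n_k}$ convergent, whence $Ax_{n_k}$ converges as well; closedness of $A$ places $x \in \domain(A)$, and continuity of $B$ on $W$ identifies the limit with $(A+B)x = y$. To establish the boundedness of $\{Ax_n\}$, I would argue by contradiction via a scaling trick: assuming $\alpha_n := \|Ax_n\|_Y \to \infty$ along a subsequence, set $v_n := x_n / \alpha_n$; then $v_n \to 0$ in $X$, $\|A v_n\|_Y = 1$, and $(A+B) v_n \to 0$. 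Since $\{v_n\}$ is bounded in $W$, $A$-compactness of $B$ extracts $B v_n \to w$ along a further subsequence, hence $A v_n \to -w$, and closedness of $A$ forces $w = 0$, contradicting $\|A v_n\|_Y = 1$.

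The main obstacle will be this closedness argument. The scaling trick is needed because the $A$-compactness hypothesis controls $B$ only in the graph norm, whereas closedness must start from mere $X$-convergence; everything else is routine bookkeeping or a direct appeal to the classical bounded-Fredholm compact perturbation theorem.
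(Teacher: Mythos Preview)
The paper does not prove this theorem; it is quoted from the literature (Kato \cite{kato1958perturbation} and Schechter \cite{schechter2002principles}) and used as a black box. So there is no in-paper argument to compare against.

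Your reduction is the standard one and is correct. Passing to the graph-norm space $W$ turns $A$ into a bounded Fredholm operator with unchanged nullspace, range, and index, and turns the $A$-compactness hypothesis on $B$ into genuine compactness $W\to Y$; the bounded compact-perturbation theorem then does the work. Your transfer back to the unbounded setting is also sound: the nullspace, closedness of the range in $Y$, and codimension are intrinsic to $Y$, so the only real content is closedness of $A+B$ on $\domain(A)\subset X$. The scaling contradiction you outline is exactly the right device: if $\|Ax_n\|_Y\to\infty$, normalising produces a $W$-bounded sequence $v_n\to 0$ in $X$ with $\|Av_n\|_Y=1$ and $(A+B)v_n\to 0$; $A$-compactness of $B$ gives $Bv_{n_k}\to w$, hence $Av_{n_k}\to -w$, and closedness of $A$ forces $w=0$, contradicting $\|Av_n\|_Y=1$. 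Once $\{Ax_n\}$ is bounded, the same compactness extracts $Bx_{n_k}\to z$, then $Ax_{n_k}\to y-z$, closedness of $A$ gives $x\in\domain(A)$ with $Ax=y-z$, and finally $x_{n_k}\to x$ in $W$ (since both $x_{n_k}\to x$ in $X$ and $Ax_{n_k}\to Ax$) together with boundedness of $B$ on $W$ yields $z=Bx$, hence $(A+B)x=y$. Nothing is missing.
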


\begin{rem}\label{about the essential spectrum}
With the help of Theorems \ref{thm:equivalence_of_schecter_spectrum} and \ref{thm:fredholm_alternative}, we see that the essential spectrum is invariant under relatively compact perturbations. Given that the identity map on $X$ is compact relative to $A \in \mathcal{C}(X)$ whenever $\domain(A) \subset\subset X$, we know that either $\ess(A) = \emptyset$ or $\ess(A) = \C$. This fact makes calculating the essential spectrum of differential operators relatively easy whenever we can use the Rellich-Kondrachov Theorem.
\end{rem}

\begin{thm}
\label{thm:abstract_ess_reverse_product}
Let $X$ be a Banach space, $A \in \mathcal{C}(X)$, and  $B \in \mathcal{B}(X)$ be a one-to-one operator where $\domain(A) \not \subset \range(B)$. If $\domain(AB)$ is dense in $X$ with $\domain(AB) \subset\subset X$ and $\rho(A)$ is nonempty then $\ess(AB) =\C$.
\end{thm}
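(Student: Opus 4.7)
My plan is to first verify that $AB$ is automatically closed and reduce the claim to showing $0 \in \ess(AB)$, then pass from $AB$ to the more tractable operator $(A - \mu)B$ via a relatively compact perturbation, and finally exploit the bijectivity of $A - \mu$ on $\domain(A)$. The opening observation is that $AB \in \mathcal{C}(X)$: if $x_n \to x$ in $X$ and $ABx_n \to y$, then $Bx_n \to Bx$ by boundedness of $B$, and closedness of $A$ applied to $\{Bx_n\} \subset \domain(A)$ yields $Bx \in \domain(A)$ with $ABx = y$, so $x \in \domain(AB) = B^{-1}(\domain(A))$. Together with the hypothesis $\domain(AB) \subset\subset X$, Remark~\ref{about the essential spectrum} forces $\ess(AB) \in \{\emptyset, \C\}$, so it suffices to exhibit a single $\lambda \in \ess(AB)$.

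Next, I would pick $\mu \in \rho(A)$ and pivot to the closely related operator $S \coloneqq (A - \mu)B = AB - \mu B$, which is closed on the same natural domain by the same argument. Since $\mu B \in \mathcal{B}(X)$ and the inclusion $\domain(AB) \hookrightarrow X$ is compact, $\mu B$ is $AB$-compact; invariance of Schechter's essential spectrum under relatively compact perturbations (as in Remark~\ref{about the essential spectrum}, based on Theorem~\ref{thm:fredholm_alternative}) therefore gives $\ess(AB) = \ess(S)$. It thus suffices to show $0 \in \ess(S)$.

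For the final step, $S$ is closed with $\domain(S) = \domain(AB) \subset\subset X$, so $S \in \mathcal{F}_+(X)$ by Theorem~\ref{thm:compactness equals closed range}; in particular $\range(S)$ is closed. Injectivity of both $B$ and $A - \mu$ yields $\nul(S) = \{0\}$, while the identity
\begin{align*}
\range(S) = (A - \mu)\bigl(\range(B) \cap \domain(A)\bigr)
\end{align*}
combined with the bijectivity of $A - \mu : \domain(A) \to X$ shows that $(A - \mu)y_0 \in X \setminus \range(S)$ for any $y_0 \in \domain(A) \setminus \range(B)$ (supplied by the hypothesis); otherwise $(A-\mu)y_0 = (A-\mu)m$ with $m \in \range(B) \cap \domain(A)$, and injectivity of $A - \mu$ would force $y_0 = m \in \range(B)$. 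Hence $\range(S)$ is a proper closed subspace of $X$, so $S$ is either Fredholm with strictly negative index or has range of infinite codimension, and in either case $0 \in \ess(S) = \ess(AB)$. By the first paragraph this yields $\ess(AB) = \C$.

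The step I expect to be the most delicate is the invariance $\ess(AB) = \ess(S)$, which requires $\mu B$ to be $AB$-compact rather than merely $AB$-bounded; this is precisely where the compact embedding $\domain(AB) \subset\subset X$ earns its keep. The remainder of the argument is then bookkeeping with a bounded bijection, with the single nonempty element of $\ess(S)$ arising geometrically from an element of $\domain(A)$ that is not in $\range(B)$.
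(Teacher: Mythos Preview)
Your proof is correct and follows essentially the same approach as the paper: both reduce to analyzing $(A-\mu)B$ for $\mu\in\rho(A)$ via the relatively compact perturbation $\mu B$, and both conclude from $\nul((A-\mu)B)=\{0\}$ together with $\range((A-\mu)B)\subsetneq X$. The only cosmetic difference is that the paper runs the argument by contradiction (assume $\ess(AB)=\emptyset$, hence $\ind(AB)=\ind((A-\eta)B)=0$, then contradict), whereas you argue directly that $0\in\ess((A-\mu)B)=\ess(AB)$; your version also makes explicit the closedness of $AB$ and the possibility that $\range(S)$ has infinite codimension, points the paper leaves implicit.
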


\begin{proof}
The proof is by contradiction. Suppose $\ess(AB) \neq \C$. As mentioned in Remark~\ref{about the essential spectrum}, $\ess(AB) \neq \C$ implies $\ess(AB) = \emptyset$ since $\domain(AB) \subset\subset X$. This implies $AB \in \mathcal{F}(X)$ and $\ind(AB) = 0$ by Theorem~\ref{thm:equivalence_of_schecter_spectrum}. 

Since $\domain(AB) \subset\subset X$, any bounded operator on $X$ is compact relative to $AB$. In particular, $B$ is compact relative to $AB$. By Theorem~\ref{thm:fredholm_alternative}, Fredholm operators and their indices are invariant under relatively compact perturbations. Thus, for any $\eta \in \C$ we have $AB - \eta B \in \mathcal{F}(X)$ and 
\begin{align*}
	\ind(AB) = \ind(AB- \eta B) = 0.
\end{align*}
Now, if $\eta \in \rho(A)$ then $\nul(A-\eta) = \{0\}$ and $\range(A-\eta) = X$, implying $A- \eta$ maps $\domain(A)$ to $X$. But since $B$ does not map to all of $\domain(A)$ we get $\range((A-\eta)B) \neq X$. In other words
\begin{align*}
\text{co-\!} \dim \range (A B - \eta B) > 0.
\end{align*}
Recall that $\nul(B) = \{0\}$ by assumption, and $\nul(A-\eta) = \{0\}$ when $\eta \in \rho(A)$, which gives $\nul\big( (A - \eta) B \big) = \{0\}$. Thus,
\begin{align*}
\ind(AB) 
=     \ind(AB -\eta B)
=     \dim \nul(AB - \eta B) - \text{co-\!}\dim \range(AB - \eta B) < 0,
\end{align*}
which contradicts the fact that $\ind(AB) = 0$.
\end{proof}

The proof of the above theorem yields the following corollary.

\begin{cor}
\label{cor:index_reverse_product}
Let $X$ be a Banach space, $A \in \mathcal{C}(X)$, and $B \in \mathcal{B}(X)$ be a one-to-one operator where the range of $B$ is not the entirety of $\domain(A)$. If $AB$ is Fredholm with $\domain(AB) \subset\subset X$ and $\rho(A)$ is nonempty then $\ind(AB) < 0$.
\end{cor}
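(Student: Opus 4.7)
The plan is to run the argument in the proof of Theorem~\ref{thm:abstract_ess_reverse_product} in one direction, without invoking contradiction, and read off the sign of the index directly. Since we are given $\domain(AB) \subset\subset X$, every bounded operator on $X$ is compact relative to $AB$; in particular $B$ itself is relatively compact with respect to $AB$, because the embedding of $\domain(AB)$ (with the graph norm) into $X$ is compact and $B \in \mathcal{B}(X)$. Fix any $\eta \in \rho(A)$. Then Theorem~\ref{thm:fredholm_alternative} gives $AB - \eta B = (A-\eta)B \in \mathcal{F}(X)$ with
\[
\ind(AB) = \ind\bigl((A-\eta)B\bigr).
\]

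Next I would compute the two quantities defining this index. Since $B$ is one-to-one and $A-\eta \colon \domain(A) \to X$ is a bijection (as $\eta \in \rho(A)$), the composition $(A-\eta)B$ is one-to-one, so $\dim \nul\bigl((A-\eta)B\bigr) = 0$. For the co-dimension, pick $u \in \domain(A) \setminus \range(B)$, which exists by hypothesis, and set $v = (A-\eta)u \in X$. If $v$ were in $\range\bigl((A-\eta)B\bigr)$, there would be $x \in \domain(AB)$ with $(A-\eta)Bx = (A-\eta)u$; but injectivity of $A-\eta$ would then force $Bx = u$, contradicting $u \notin \range(B)$. Hence $v \notin \range\bigl((A-\eta)B\bigr)$ and $\co\dim \range\bigl((A-\eta)B\bigr) \geq 1$.

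Putting these together,
\[
\ind(AB) = \ind\bigl((A-\eta)B\bigr) = 0 - \co\dim \range\bigl((A-\eta)B\bigr) \leq -1 < 0,
\]
which is the claim. There is no genuine obstacle here, since the machinery was already built in Theorem~\ref{thm:abstract_ess_reverse_product}; the only mild point to verify is that $B$ is compact relative to $AB$, and that follows at once from $\domain(AB) \subset\subset X$ and $B \in \mathcal{B}(X)$.
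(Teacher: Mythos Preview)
Your proof is correct and is exactly the argument the paper has in mind: the corollary is stated immediately after Theorem~\ref{thm:abstract_ess_reverse_product} with the remark that ``the proof of the above theorem yields the following corollary,'' and you have simply extracted that index computation directly (using the Fredholm hypothesis on $AB$ in place of the contradiction assumption). The only cosmetic difference is that the paper argues $\range((A-\eta)B)\neq X$ from the surjectivity of $A-\eta$ onto $X$ and $\range(B)\cap\domain(A)\subsetneq\domain(A)$, while you exhibit an explicit element $v=(A-\eta)u$ outside the range; these are the same observation.
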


In some cases, we are concerned with the adjoint operator $\varphi^m A^*$ instead $A\varphi^m$. For example, one might be interested in the spectral properties of the operator $L$ given by $Lu = -(1-x^2) u_{xx}$ on $\Omega = (-1, 1)$. This operator is the adjoint of $A\varphi u = -( (1-x^2)u )_{xx}$ where $A$ is the Laplacian on $L^2(\Omega)$ and $\varphi(x) = (1-x^2)$. In this case, we have the following theorem.

\begin{thm}
\label{thm:roots equal spectrum}
Let $\Omega \subset \R^d$ be open and bounded with $\mathscr{C}^{0,1}$ boundary, $m,k \in \N$ with $m < k$, and let $A$ be densely defined on $L^p(\Omega)$. Assume
\begin{enumerate}[label=(\alph*),ref=(\alph*)]
\item $A$ is closed on $L^p(\Omega)$ and $\domain(A) \subset W^{k,p}(\Omega)$.
\item There exists a $u \in \domain(A)$ such that $u \notin W^{m,p}_0(\Omega) \cap W^{k,p}(\Omega)$. \label{item:range not everything}
\item The resolvent set $\rho(A)$ is non-empty.
\end{enumerate}
If $\varphi \in \mathscr{C}^k(\bar{\Omega})$ is simply vanishing then
\begin{align*}
\ess(A \varphi^m ) = \ess(\overline{\varphi^m A^*} ) =\C. 
\end{align*}
Moreover, if either $A\varphi^m$ or $\overline{\varphi^m A^*}$ is Fredholm then
\begin{align*}
\sigmap(\overline{\varphi^m A^*} ) = \C.
\end{align*}
\end{thm}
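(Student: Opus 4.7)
The plan is to recognize $A\varphi^m$ as fitting the hypotheses of Theorem~\ref{thm:abstract_ess_reverse_product} with $B = \varphi^m$, to establish the reflexive-space adjoint identity $(A\varphi^m)^* = \overline{\varphi^m A^*}$, and then to deduce the essential spectrum statement for $\overline{\varphi^m A^*}$ as well as the point spectrum statement via Corollary~\ref{cor:index_reverse_product} combined with duality.

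First I would show $A \in \mathcal{F}(L^p(\Omega))$ with $\ind(A) = 0$: picking $\eta \in \rho(A)$, the operator $A - \eta$ is a closed bijection $\domain(A) \to L^p(\Omega)$ and hence Fredholm of index $0$; since $\domain(A) \subset W^{k,p}(\Omega) \subset\subset L^p(\Omega)$ by Rellich--Kondrachov, $\eta I$ is compact relative to $A$ and Theorem~\ref{thm:fredholm_alternative} promotes $A = (A-\eta) + \eta I$ to $\mathcal{F}(L^p(\Omega))$. Reflexivity and Theorem~\ref{thm:reflexive adjoint fredholm} then give $A^* \in \mathcal{F}(L^{p'}(\Omega))$, so Theorem~\ref{thm:closable_product} ensures $\varphi^m A^*$ is closable. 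Next I pair: for $u$ in the natural domain of $A\varphi^m$ and $v \in \domain(A^*)$, $\langle A\varphi^m u, v\rangle = \langle \varphi^m u, A^* v\rangle = \langle u, \varphi^m A^* v\rangle$, so $(\varphi^m A^*)^* \supset A\varphi^m$. Conversely, if $u \in \domain((\varphi^m A^*)^*)$ with $(\varphi^m A^*)^* u = w$, then $\langle \varphi^m u, A^* v\rangle = \langle w, v\rangle$ for all $v \in \domain(A^*)$, and reflexivity gives $\varphi^m u \in \domain(A^{**}) = \domain(A)$, i.e.\ $u \in \domain(A\varphi^m)$. Thus $(\overline{\varphi^m A^*})^* = (\varphi^m A^*)^* = A\varphi^m$, and taking one more adjoint on the reflexive space $L^{p'}(\Omega)$ yields $(A\varphi^m)^* = \overline{\varphi^m A^*}$; moreover, $A\varphi^m$ inherits closedness and dense-definedness from being the adjoint of the closed densely defined operator $\overline{\varphi^m A^*}$.

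I then apply Theorem~\ref{thm:abstract_ess_reverse_product} with $B = \varphi^m$: the multiplication $\varphi^m \in \mathcal{B}(L^p(\Omega))$ is injective since $\varphi > 0$ on $\Omega$; by Lemma~\ref{lem:statement of range} every $u \in \range(\varphi^m) \cap W^{k,p}(\Omega)$ lies in $W^{m,p}_0(\Omega) \cap W^{k,p}(\Omega)$, so assumption~\ref{item:range not everything} forces $\domain(A) \not\subset \range(\varphi^m)$; and since $\domain(A) \hookrightarrow W^{k,p}(\Omega)$ continuously by Lemma~\ref{lem:equivalence_of_norms} with $k > m$, Lemma~\ref{lem:compactness implies compactness almost} produces $\domain(A\varphi^m) \subset\subset L^p(\Omega)$. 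Theorem~\ref{thm:abstract_ess_reverse_product} then yields $\ess(A\varphi^m) = \C$, and combining with $(A\varphi^m)^* = \overline{\varphi^m A^*}$ and Theorem~\ref{thm:reflexive adjoint fredholm} (which transfers ``Fredholm of index $0$'' through the adjoint on a reflexive space) gives $\ess(\overline{\varphi^m A^*}) = \C$.

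For the point spectrum, the adjoint identity makes the two Fredholmness hypotheses equivalent, so assume $A\varphi^m \in \mathcal{F}(L^p(\Omega))$. Corollary~\ref{cor:index_reverse_product} yields $\ind(A\varphi^m) < 0$; compact embedding of $\domain(A\varphi^m)$ makes $-\lambda I$ a relatively compact perturbation, so Theorem~\ref{thm:fredholm_alternative} gives $\ind(A\varphi^m - \lambda) = \ind(A\varphi^m) < 0$ for every $\lambda \in \C$, whence $\co\dim \range(A\varphi^m - \lambda) > 0$. Reflexivity and the Fredholm adjoint dimension formula then give $\dim \nul(\overline{\varphi^m A^*} - \bar\lambda) > 0$ for every $\lambda$, so $\sigmap(\overline{\varphi^m A^*}) = \C$. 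The main obstacle is the adjoint identity of the second step: it is what both produces the essential-spectrum conclusion for $\overline{\varphi^m A^*}$ and guarantees, without any hypothesis on $A$ beyond $\rho(A) \neq \emptyset$, that $A\varphi^m$ is densely defined so that the essential spectrum on the left-hand side is meaningful.
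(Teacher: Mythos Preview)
Your proof is correct and uses the same core tools (Theorem~\ref{thm:abstract_ess_reverse_product}, Corollary~\ref{cor:index_reverse_product}, Rellich--Kondrachov, Lemma~\ref{lem:compactness implies compactness almost}), but it is organized differently from the paper and in one respect is more careful.

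The paper never writes down the identity $(A\varphi^m)^* = \overline{\varphi^m A^*}$; instead it argues by cases. In its Claim~3 it assumes $A\varphi^m$ is \emph{not} Fredholm and invokes Theorem~\ref{thm:reflexive adjoint fredholm} in contrapositive form to push non-Fredholmness to $\overline{\varphi^m A^*}-\bar\lambda$; in its Claim~4 it assumes $A\varphi^m$ \emph{is} Fredholm, uses Theorem~\ref{thm:reflexive adjoint compactness} to obtain $\domain(\overline{\varphi^m A^*})\subset\subset L^{p'}(\Omega)$, and then runs the index-stability argument on the adjoint side. You replace this case split by proving the adjoint identity once via the pairing argument and $A^{**}=A$, after which both essential-spectrum statements follow in one line from Theorem~\ref{thm:equivalence_of_schecter_spectrum} and Theorem~\ref{thm:reflexive adjoint fredholm}. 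For the point spectrum you stay on the $A\varphi^m$ side, use $\ind(A\varphi^m-\lambda)<0$ to get $\co\dim\range(A\varphi^m-\lambda)>0$, and transfer to $\dim\nul(\overline{\varphi^m A^*}-\bar\lambda)>0$ by duality; the paper instead transfers compactness to $\domain(\overline{\varphi^m A^*})$ and runs the index argument there.

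Your route also buys something the paper glosses over: dense definedness of $A\varphi^m$. Theorem~\ref{thm:abstract_ess_reverse_product} requires $\domain(AB)$ dense, and the paper's Claim~1 checks only closedness. Your identification $A\varphi^m=(\overline{\varphi^m A^*})^*$, with $\overline{\varphi^m A^*}$ closed and densely defined on a reflexive space, delivers both closedness and dense definedness of $A\varphi^m$ at once. The preliminary step showing $A\in\mathcal{F}(L^p(\Omega))$ (via $\rho(A)\neq\emptyset$ and relative compactness of the identity) is what makes Theorem~\ref{thm:closable_product} applicable to $\varphi^m A^*$ and is a clean addition.
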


\begin{rem}
By Theorem~\ref{thm:not closed but closable}, ${\varphi^m A^*}$ is never closed on its natural domain when $A$ is Fredholm. Thus, we examine its closure since statements about the essential spectrum are uninformative for operators that are not closed.
\end{rem}

\begin{proof}
As usual, let $\varphi^m$ denote the multiplication operator $u \mapsto \varphi^m u$ from $L^p(\Omega)$ to $W^{k,p}(\Omega)$. The proof is broken into 4 claims.

\begin{claim}{1}
$A\varphi^m$ is closed on its natural domain and $\domain(A\varphi^m) \subset\subset L^p(\Omega)$.
\end{claim}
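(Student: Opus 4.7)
The plan is to split Claim~1 into its two assertions. For closedness I would combine the closedness of $A$ on $L^p(\Omega)$ with the boundedness of the multiplier $\varphi^m$ on $L^p(\Omega)$. For the compact embedding I would factor the inclusion $\domain(A\varphi^m) \hookrightarrow L^p(\Omega)$ through the set $D \coloneqq \{u \in L^p(\Omega) : \varphi^m u \in W^{m+1,p}(\Omega)\}$ and invoke Lemma~\ref{lem:compactness implies compactness almost}.

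For closedness, I would take a sequence $\{u_n\} \subset \domain(A\varphi^m)$ with $u_n \to u$ in $L^p(\Omega)$ and $A\varphi^m u_n \to y$ in $L^p(\Omega)$. Because $\varphi \in \mathscr{C}^{k}(\bar{\Omega})$, multiplication by $\varphi^m$ is bounded on $L^p(\Omega)$, hence $\varphi^m u_n \to \varphi^m u$ in $L^p(\Omega)$. Applying the closedness of $A$ to the sequence $\{\varphi^m u_n\} \subset \domain(A)$ then gives $\varphi^m u \in \domain(A)$ and $A(\varphi^m u) = y$, which is precisely the statement that $u \in \domain(A\varphi^m)$ with $A\varphi^m u = y$.

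For the compact embedding I first observe that $m < k$ forces $k \geq m + 1$, so $\domain(A) \subset W^{k,p}(\Omega) \subset W^{m+1,p}(\Omega)$. Consequently $\domain(A\varphi^m) \subset D$, and Lemma~\ref{lem:compactness implies compactness almost} delivers $D \subset\subset L^p(\Omega)$ in the graph norm of $D$. What remains is to show that the inclusion $\domain(A\varphi^m) \hookrightarrow D$ is continuous in the respective graph norms. Closedness of $A$ together with $\domain(A) \subset W^{k,p}(\Omega)$ lets me apply Lemma~\ref{lem:equivalence_of_norms} to produce a constant $c$ with $\|v\|_{W^{k,p}(\Omega)} \leq c\|v\|_{\domain(A)}$ for all $v \in \domain(A)$. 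Substituting $v = \varphi^m u$ and using that $\varphi^m$ is a bounded multiplier on $L^p(\Omega)$ gives
\[
\|\varphi^m u\|_{W^{m+1,p}(\Omega)} \leq \|\varphi^m u\|_{W^{k,p}(\Omega)} \leq c\bigl(\|\varphi^m u\|_{L^p(\Omega)} + \|A\varphi^m u\|_{L^p(\Omega)}\bigr) \leq c'\|u\|_{\domain(A\varphi^m)}.
\]
Composing this bounded inclusion with the compact embedding $D \subset\subset L^p(\Omega)$ then yields $\domain(A\varphi^m) \subset\subset L^p(\Omega)$.

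The only step I expect to require any real care is the norm comparison via Lemma~\ref{lem:equivalence_of_norms}; the rest is a direct assembly of ingredients already in place --- the Rellich-type compactness encoded in Lemma~\ref{lem:compactness implies compactness almost} and the standard closedness manipulation for compositions of a closed operator with a bounded multiplier.
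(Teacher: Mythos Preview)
Your proof is correct. The compactness half follows the paper almost exactly: both of you factor through Lemma~\ref{lem:compactness implies compactness almost} using $m<k$, and the paper's one-line ``$\domain(A\varphi^m)\subset\domain(\varphi^m)\subset\subset L^p(\Omega)$'' is just your argument with the graph-norm comparison (your appeal to Lemma~\ref{lem:equivalence_of_norms}) left implicit.

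Where you genuinely diverge is the closedness argument. The paper first upgrades both factors to semi-Fredholm operators: it invokes Rellich--Kondrachov and Theorem~\ref{thm:compactness equals closed range} to show $A\in\mathcal{F}_+(L^p)$, then Lemma~\ref{lem:compactness implies compactness almost} and Theorem~\ref{thm:compactness equals closed range} again to show $\varphi^m\in\mathcal{F}_+(L^p,W^{k,p})$, and concludes that the composition of two semi-Fredholm operators is closed. You instead treat $\varphi^m$ as a bounded multiplier on $L^p(\Omega)$ and use the elementary fact that a closed operator precomposed with a bounded one is closed. Your route is shorter and uses strictly less machinery; the paper's route has the side benefit of establishing the semi-Fredholm property of each factor, but that is not actually needed elsewhere in the proof of Claim~1.
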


Given $\partial \Omega$ is $\mathscr{C}^{0,1}$, we see that $\domain(A) \subset W^{k,p}(\Omega) \subset\subset L^p(\Omega)$ by the Rellich-Kondrachov Theorem. Since $A$ is closed on $\domain(A)$, $A$ must be semi-Fredholm on $L^p(\Omega)$ by Theorem~\ref{thm:compactness equals closed range}. With the assumption that $m < k$, Lemma~\ref{lem:compactness implies compactness almost} implies $\domain(\varphi^m) \subset\subset L^p(\Omega)$, and applying Theorem~\ref{thm:compactness equals closed range} yields $\varphi^m$ is semi-Fredholm from $L^p(\Omega)$ to $W^{k,p}(\Omega)$. Since both $A$ and $\varphi^m$ are semi-Fredholm, $A\varphi^m$ is closed on its natural domain. The fact that
\begin{align*}
\domain(A\varphi^m) \subset \domain(\varphi^m) \subset\subset L^p(\Omega)
\end{align*}
completes the proof of the claim.

\begin{claim}{2}
$\ess(A\varphi^m) = \C$
\end{claim}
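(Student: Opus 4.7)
The strategy is to continue the numbered-claim outline with Claims 2, 3, and 4, each leveraging the Fredholm/adjoint machinery already developed in the paper. For Claim 2, the plan is to apply Theorem~\ref{thm:abstract_ess_reverse_product} with $X = L^p(\Omega)$, taking $A$ as given and $B = \varphi^m$ viewed as a bounded self-map of $L^p(\Omega)$. The hypothesis requiring real work is $\domain(A) \not\subset \range(\varphi^m : L^p \to L^p)$: if some $u \in \domain(A)$ factored as $u = \varphi^m v$ with $v \in L^p(\Omega)$, then $\varphi^m v = u \in W^{k,p}(\Omega)$ would place $v$ in $\domain(\varphi^m : L^p \to W^{k,p})$, and Lemma~\ref{lem:statement of range} (valid since $m < k$) would force $u \in W^{k,p}(\Omega) \cap W^{m,p}_0(\Omega)$, contradicting assumption (b). Injectivity of $\varphi^m$ is immediate from $\varphi > 0$ on $\Omega$, $\rho(A)$ is nonempty by (c), and $\domain(A\varphi^m) \subset\subset L^p(\Omega)$ was established in Claim~1; density of $\domain(A\varphi^m)$ in $L^p(\Omega)$ follows from the standard observation that $\mathscr{C}^\infty_0(\Omega) \subset \domain(A)$ for differential operators, combined with $\varphi^m \mathscr{C}^\infty_0(\Omega) \subset \mathscr{C}^\infty_0(\Omega)$.

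For Claims 3 and 4, the core step is identifying $\overline{\varphi^m A^*}$ with $(A\varphi^m)^*$ and transporting Fredholm information via reflexivity of $L^p(\Omega)$. I would first upgrade $A$ from closed to Fredholm of index $0$: picking $\eta \in \rho(A)$ makes $A - \eta$ bijective with bounded inverse, and since $\domain(A) \subset W^{k,p}(\Omega) \subset\subset L^p(\Omega)$ the identity is compact relative to $A - \eta$, so Theorem~\ref{thm:fredholm_alternative} gives $A = (A - \eta) + \eta I \in \mathcal{F}(L^p)$ with $\ind(A) = 0$. Theorems~\ref{thm:reflexive adjoint fredholm} and \ref{thm:reflexive adjoint compactness} then yield $A^* \in \mathcal{F}(L^{p'})$ with $\ind(A^*) = 0$ and $\domain(A^*) \subset\subset L^{p'}$. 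Applying the standard formula $(ST)^* = T^*S^*$ for $T$ closed densely defined and $S$ bounded and everywhere defined, with $T = A^*$ and $S = \varphi^m$, gives $(\varphi^m A^*)^* = A\varphi^m$ on its natural domain, and since $A\varphi^m$ is closed and densely defined, $\overline{\varphi^m A^*} = (\varphi^m A^*)^{**} = (A\varphi^m)^*$. Claim 3 now follows from the reflexive-space identity $\ess(T^*) = \overline{\ess(T)}$ (a direct consequence of Theorems~\ref{thm:equivalence_of_schecter_spectrum} and \ref{thm:reflexive adjoint fredholm}) combined with $\overline{\C} = \C$.

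For Claim 4, assuming $A\varphi^m$ is Fredholm, every hypothesis of Corollary~\ref{cor:index_reverse_product} holds (one-to-one $\varphi^m$, $\domain(A) \not\subset \range(\varphi^m)$, compactly embedded $\domain(A\varphi^m)$, nonempty $\rho(A)$), so $\ind(A\varphi^m) < 0$. Passing to the adjoint via Theorem~\ref{thm:reflexive adjoint fredholm} gives $\ind(\overline{\varphi^m A^*}) = -\ind(A\varphi^m) > 0$. Theorem~\ref{thm:reflexive adjoint compactness} provides $\domain(\overline{\varphi^m A^*}) = \domain((A\varphi^m)^*) \subset\subset L^{p'}$, so the identity is compact relative to $\overline{\varphi^m A^*}$ and Theorem~\ref{thm:fredholm_alternative} gives $\overline{\varphi^m A^*} - \lambda \in \mathcal{F}(L^{p'})$ with the same positive index for every $\lambda \in \C$; positivity of the index forces $\dim \nul(\overline{\varphi^m A^*} - \lambda) > 0$, hence $\lambda \in \sigmap(\overline{\varphi^m A^*})$. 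The case where $\overline{\varphi^m A^*}$ is assumed Fredholm reduces to the previous one via the adjoint identification. The main obstacle I anticipate is verifying the identification $\overline{\varphi^m A^*} = (A\varphi^m)^*$ cleanly --- in particular checking that $A\varphi^m$ is densely defined on $L^p(\Omega)$ so that the double-adjoint argument is valid; everything else is a routine bookkeeping exercise once this identification is in hand.
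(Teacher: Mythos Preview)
Your approach to Claim 2 is essentially the paper's: apply Theorem~\ref{thm:abstract_ess_reverse_product} with $B = \varphi^m$, using Lemma~\ref{lem:statement of range} together with assumption~(b) to verify $\domain(A) \not\subset \range(\varphi^m)$, and Claim~1 for the compact embedding of $\domain(A\varphi^m)$. The paper is in fact slightly terser --- it does not spell out the density of $\domain(A\varphi^m)$ or the passage from $\range(\varphi^m : L^p \to L^p)$ to $\range(\varphi^m : L^p \to W^{k,p})$ the way you do --- and the material you include on Claims~3 and~4 goes beyond the stated claim.
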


Applying Lemma~\ref{lem:statement of range} to $\varphi^m$ shows that $\range(\varphi^m) = W^{m,p}_0(\Omega) \cap W^{k,p}(\Omega)$. This and assumption \ref{item:range not everything} implies $\varphi^m$ does not map to all of $\domain(A)$. Given claim 1, $\rho(A)$ is non-empty, and the multiplication operator $\varphi^m : L^p(\Omega) \to L^p(\Omega)$ is bounded, we may apply Theorem~\ref{thm:abstract_ess_reverse_product} to prove $\ess(A\varphi^m) = \C$.

\begin{claim}{3}
If $A\varphi^m$ is not Fredholm then $\ess(\overline{\varphi^m A^*} ) =\C$.
\end{claim}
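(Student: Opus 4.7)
The plan is to reduce the essential spectrum computation to Claim 2 by identifying $(A\varphi^m)^*$ with $\overline{\varphi^m A^*}$ and then invoking Fredholm duality on the reflexive space $L^p(\Omega)$. First I will establish that $A$ itself is Fredholm of index zero: $\rho(A)$ is nonempty by hypothesis, and for any $\lambda_0 \in \rho(A)$ the operator $A - \lambda_0$ is bijective; since $\domain(A) \subset W^{k,p}(\Omega) \subset\subset L^p(\Omega)$ by Rellich--Kondrachov, $\lambda_0 I$ is compact relative to $A$, so Theorem~\ref{thm:fredholm_alternative} gives $A \in \mathcal{F}(L^p(\Omega))$ with $\ind(A) = \ind(A - \lambda_0) = 0$. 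Theorem~\ref{thm:reflexive adjoint fredholm} then makes $A^* \in \mathcal{F}(L^{p'}(\Omega))$, and $\overline{\varphi^m A^*}$ exists as a closed operator by Theorem~\ref{thm:closable_product}.

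The key step is to establish the identity $(A\varphi^m)^* = \overline{\varphi^m A^*}$. Since $\varphi$ is real-valued, the bounded multiplication $\varphi^m$ is self-adjoint, so for $u \in \domain(A\varphi^m)$ and $v \in \domain(A^*)$ I compute
\begin{align*}
\langle A\varphi^m u, v\rangle = \langle \varphi^m u, A^* v\rangle = \langle u, \varphi^m A^* v\rangle,
\end{align*}
which gives $\varphi^m A^* \subseteq (A\varphi^m)^*$ and, by closedness of the adjoint, $\overline{\varphi^m A^*} \subseteq (A\varphi^m)^*$. For the reverse inclusion I will compute $(\overline{\varphi^m A^*})^*$ directly: if $u$ lies in its domain with image $w$, then $\langle A^* v, \varphi^m u\rangle = \langle v, w\rangle$ for every $v \in \domain(A^*)$, forcing $\varphi^m u \in \domain(A^{**}) = \domain(A)$ by reflexivity and $A\varphi^m u = w$. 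This yields $(\overline{\varphi^m A^*})^* \subseteq A\varphi^m$; taking adjoints on the reflexive space (and using $(A\varphi^m)^{**} = A\varphi^m$, which requires that $A\varphi^m$ is closed and densely defined, both supplied by Claim 1 and Claim 2) produces the reverse inclusion $(A\varphi^m)^* \subseteq \overline{\varphi^m A^*}$.

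With the identity in hand the claim follows from Claim 2 and Fredholm duality. For each $\lambda \in \C$, Claim 2 together with Theorem~\ref{thm:equivalence_of_schecter_spectrum} says that $A\varphi^m - \lambda$ is not Fredholm of index zero on $L^p(\Omega)$; Theorem~\ref{thm:reflexive adjoint fredholm} then forces $(A\varphi^m - \lambda)^* = \overline{\varphi^m A^*} - \bar\lambda$ also to fail Fredholmness of index zero, so $\bar\lambda \in \ess(\overline{\varphi^m A^*})$. Varying $\lambda$ over $\C$ yields $\ess(\overline{\varphi^m A^*}) = \C$. I note that the hypothesis ``$A\varphi^m$ is not Fredholm'' supplies an alternate direct route to $0 \in \ess(\overline{\varphi^m A^*})$ via $\dim\nul((A\varphi^m)^*) = \co\dim\range(A\varphi^m) = \infty$ (infinite co-dimension because $A\varphi^m$ is semi-Fredholm by Theorem~\ref{thm:compactness equals closed range} but not Fredholm by assumption), though extending from $0$ to all of $\C$ still proceeds through the duality above. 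The hardest step I anticipate is the reverse inclusion in the adjoint identity: because $\varphi^m$ is not invertible on $L^p(\Omega)$, the standard formula $(AB)^* = B^*A^*$ for invertible $B$ does not apply, and the identification of $(\overline{\varphi^m A^*})^*$ with a subset of $A\varphi^m$ relies crucially on reflexivity to recover membership $\varphi^m u \in \domain(A)$ from duality pairings against $\domain(A^*)$.
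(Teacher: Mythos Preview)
Your argument is correct, but it takes a different route from the paper's. The paper uses the hypothesis ``$A\varphi^m$ is not Fredholm'' directly: since $\domain(A\varphi^m)\subset\subset L^p(\Omega)$ (Claim~1), the identity is compact relative to $A\varphi^m$, so Theorem~\ref{thm:fredholm_alternative} forces $A\varphi^m-\lambda$ to be non-Fredholm for every $\lambda$; the contrapositive of Theorem~\ref{thm:reflexive adjoint fredholm} (together with the tacit identification $(A\varphi^m)^*=\overline{\varphi^m A^*}$) then gives $\overline{\varphi^m A^*}-\bar\lambda$ non-Fredholm, hence $\bar\lambda\in\ess(\overline{\varphi^m A^*})$.

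Your route instead ignores the hypothesis of Claim~3 and feeds off Claim~2: from $\ess(A\varphi^m)=\C$ and Theorem~\ref{thm:equivalence_of_schecter_spectrum} you get ``not Fredholm of index zero'' for $A\varphi^m-\lambda$, and duality transfers that to the adjoint. Two remarks. First, this actually proves the essential-spectrum assertion unconditionally, covering both Claim~3 and the $\ess$ half of Claim~4 in one stroke; the case split in the paper is therefore not needed for the essential spectrum. Second, you make the identity $(A\varphi^m)^*=\overline{\varphi^m A^*}$ explicit, which the paper invokes silently; your argument for the reverse inclusion via $(\overline{\varphi^m A^*})^*\subseteq A\varphi^m$ and double adjoints is the right way to handle the non-invertibility of $\varphi^m$, and the density checks needed for $B^{**}=B$ on both operators are supplied by Claim~1 and Theorem~\ref{thm:densely_defined_adjoint}. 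The opening paragraph establishing $\ind(A)=0$ is correct but unnecessary here; you only need that $A^*$ is densely defined (automatic for closed $A$ on a reflexive space) to invoke Theorem~\ref{thm:closable_product}.
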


Fix $\lambda \in \mathbb{C}$. Claim 1 implies the identity is compact relative to $A\varphi^m$. Since $A\varphi^m$ is not Fredholm, $A\varphi^m - \lambda$ cannot be Fredholm by Theorem~\ref{thm:fredholm_alternative}. By Theorem~\ref{thm:reflexive adjoint fredholm}, this implies $\overline{\varphi^m A^*} - \bar{\lambda}$ is not Fredholm. Applying Theorem~\ref{thm:equivalence_of_schecter_spectrum} to $\overline{\varphi^m A^*} - \bar{\lambda}$ shows $\bar{\lambda} \in \ess(\overline{\varphi^m A^*})$. Noting that $\lambda \in \C$ was arbitrary completes the proof of the claim.

\begin{claim}{4}
If $A\varphi^m$ or $\overline{\varphi^m A^*}$ is Fredholm then $\ess(\overline{\varphi^m A^*} ) = \sigmap(\overline{\varphi^m A^*} ) = \C$.
\end{claim}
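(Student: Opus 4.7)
The plan is to compute $\ind(A\varphi^m)$ via the Corollary on reverse products, propagate it via a compact perturbation argument to every $\lambda \in \C$, and then transfer to $\overline{\varphi^m A^*}$ by duality. Since $L^p(\Omega)$ is reflexive for $1 < p < \infty$, Theorem~\ref{thm:reflexive adjoint fredholm} combined with the identification $(A\varphi^m)^* = \overline{\varphi^m A^*}$ (coming from Theorem~\ref{thm:densely_defined_adjoint} together with the fact that adjoints are automatically closed) shows that $A\varphi^m$ is Fredholm if and only if $\overline{\varphi^m A^*}$ is Fredholm, with indices of opposite sign. So I may assume that $A\varphi^m$ is Fredholm.

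Next I would apply Corollary~\ref{cor:index_reverse_product} with $X = L^p(\Omega)$, the operator $A$ from the hypothesis, and $B = \varphi^m$ viewed as a bounded one-to-one multiplication operator on $L^p(\Omega)$. All required inputs are present: $\varphi^m$ is one-to-one since $\varphi > 0$ a.e.\ on $\Omega$; by Lemma~\ref{lem:statement of range} its range in $W^{k,p}(\Omega)$ is $W^{m,p}_0(\Omega) \cap W^{k,p}(\Omega)$, which by hypothesis~(b) fails to contain all of $\domain(A)$; $\rho(A)$ is non-empty by hypothesis~(c); and $\domain(A\varphi^m) \subset\subset L^p(\Omega)$ was established in Claim~1. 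The corollary yields $\ind(A\varphi^m) < 0$.

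To pass to every $\lambda \in \C$, the compactness $\domain(A\varphi^m) \subset\subset L^p(\Omega)$ makes the identity compact relative to $A\varphi^m$, so Theorem~\ref{thm:fredholm_alternative} gives $A\varphi^m - \lambda \in \mathcal{F}(L^p(\Omega))$ with $\ind(A\varphi^m - \lambda) = \ind(A\varphi^m) < 0$. Dualizing, $\overline{\varphi^m A^*} - \bar{\lambda} = (A\varphi^m - \lambda)^*$ is Fredholm with index $-\ind(A\varphi^m) > 0$; in particular $\dim \nul(\overline{\varphi^m A^*} - \bar{\lambda}) > 0$, so $\bar{\lambda} \in \sigmap(\overline{\varphi^m A^*})$, and the nonzero index yields $\bar{\lambda} \in \ess(\overline{\varphi^m A^*})$ via Theorem~\ref{thm:equivalence_of_schecter_spectrum}. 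Letting $\lambda$ range over $\C$ gives both $\sigmap(\overline{\varphi^m A^*}) = \C$ and $\ess(\overline{\varphi^m A^*}) = \C$, completing the claim.

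The main technical obstacle is the initial identification of $(A\varphi^m)^*$ with the closure $\overline{\varphi^m A^*}$: Theorem~\ref{thm:not closed but closable} (applied to $A^*$, which is Fredholm in the reflexive setting by Theorem~\ref{thm:reflexive adjoint fredholm}) shows that $\varphi^m A^*$ is not closed on its natural domain, so one must be careful that the adjoint of $A\varphi^m$ coincides with the closure rather than with a strictly larger closed extension. Once this identification is in place, the remaining argument is a routine index computation paired with the Fredholm alternative.
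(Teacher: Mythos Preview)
Your argument is correct and follows the same overall strategy as the paper: reduce to $A\varphi^m$ Fredholm via reflexivity, invoke Corollary~\ref{cor:index_reverse_product} to get $\ind(A\varphi^m) < 0$, then propagate the strictly positive index of the adjoint to every $\lambda$ and read off both the point and essential spectrum.

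The one genuine difference is the order in which you perturb and dualize. You stay on the primal side, using $\domain(A\varphi^m)\subset\subset L^p(\Omega)$ from Claim~1 together with Theorem~\ref{thm:fredholm_alternative} to conclude that $A\varphi^m-\lambda$ is Fredholm of the same negative index for every $\lambda$, and only then take adjoints. The paper instead dualizes once at $\lambda=0$, invokes Theorem~\ref{thm:reflexive adjoint compactness} to obtain $\domain(\overline{\varphi^m A^*})\subset\subset L^p(\Omega)$, and performs the compact perturbation directly on $\overline{\varphi^m A^*}$. Your route is slightly more economical in that it avoids Theorem~\ref{thm:reflexive adjoint compactness} altogether; the paper's route has the mild advantage that once the dual compactness is established, no further appeal to the identification $(A\varphi^m-\lambda)^*=\overline{\varphi^m A^*}-\bar\lambda$ is needed for each $\lambda$. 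Either way the index computation and the conclusion are identical, and the technical point you flag about $(A\varphi^m)^*=\overline{\varphi^m A^*}$ is one the paper also takes for granted.
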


Given $L^p(\Omega)$ is reflexive, $A\varphi^m$ is Fredholm if and only if $\overline{\varphi^m A^*}$ is Fredholm by Theorem~\ref{thm:reflexive adjoint fredholm}. Since $\rho(A)$ is non-empty and $\domain(A\varphi^m) \subset\subset L^p(\Omega)$, Corollary~\ref{cor:index_reverse_product} then implies $\ind(A\varphi^m) < 0$. Thus,
\begin{align*}
\ind(\overline{\varphi^m A^*}) = -\ind(A \varphi^m ) > 0.
\end{align*}
Moreover, by Theorem~\ref{thm:reflexive adjoint compactness} and claim 1, we have that
\begin{align*}
\domain(\overline{\varphi^m A^*}) \subset\subset L^p(\Omega),
\end{align*}
so for any $\lambda \in \C$, we can conclude
\begin{align} \label{index by positive}
\ind( \overline{\varphi^m A^*} - \lambda) = \ind( \overline{\varphi^m A^*}) > 0.
\end{align}
From \eqref{index by positive} we have $\dim \nul(\overline{\varphi^m A^*} - \lambda) > 0$, which implies $\lambda \in \sigmap(\overline{\varphi^m A^*})$. By Theorem~\ref{thm:equivalence_of_schecter_spectrum}, \eqref{index by positive} also implies $\lambda \in \ess( \overline{\varphi^m A^*} )$. Since $\lambda \in \C$ was arbitrary, we are done.
\end{proof}

\section{Acknowledgements}

Parts of this paper have grown out of work that I completed in my thesis. I would like to thank my thesis advisor, Jay Douglas Wright, for his teachings and innumerable
insightful discussions.

\bibliographystyle{amsplain}
\bibliography{wave_refs,books,analysis}


\end{document}